\definecolor{gray}{RGB}{128,128,128}
\newtheorem{theorem}{Theorem}
\newtheorem{assumption}{Assumption}
\newtheorem{lemma}{Lemma}
\newtheorem{remark}{Remark}
\definecolor{gray}{RGB}{128,128,128}
\begin{document}
\title{\textbf{Distributed Constrained Online Nonconvex Optimization with Compressed Communication}}
\author{Kunpeng~Zhang,
        Lei~Xu,
        Xinlei~Yi,
        Ming~Cao,
        Karl~H.~Johansson,\\
        Tianyou~Chai,
        and Tao~Yang
}


\maketitle

\begin{abstract}
This paper considers distributed online nonconvex optimization with time-varying inequality constraints over a network of agents. For a time-varying graph, we propose a distributed online primal--dual algorithm with compressed communication to efficiently utilize communication resources. We show that the proposed algorithm establishes an $\mathcal{O}( {{T^{\max \{ {1 - {\theta _1},{\theta _1}} \}}}} )$ network regret bound and an $\mathcal{O}( {T^{1 - {\theta _1}/2}} )$ network cumulative constraint violation bound, where $T$ is the number of iterations and ${\theta _1} \in ( {0,1} )$ is a user-defined trade-off parameter. When Slater's condition holds, the network cumulative constraint violation bound is reduced to $\mathcal{O}( {T^{1 - {\theta _1}}} )$. 
These bounds are comparable to the state-of-the-art results established by existing distributed online algorithms with perfect communication for distributed online convex optimization with (time-varying) inequality constraints. Finally, a simulation example is presented to validate the theoretical results.
\end{abstract}


\section{Introduction}
Distributed online convex optimization offers a promising framework for modeling a variety of problems in dynamic, uncertain, and adversarial environments, with wide-ranging applications such as real-time routing in data networks and online advertisement placement in web search \cite{Hazan2016a}.
Various projection-based distributed online algorithms with sublinear regret have been proposed to solve the distributed online convex optimization problem, see, e.g., \cite{SundharRam2010, Yan2012, Tsianos2012, Hosseini2016, MateosNunez2014, Koppel2015, Shahrampour2017, Yuan2021, Li2022}, and recent survey paper~\cite{Li2023}. For example, for the fix communication topology, the authors of \cite{SundharRam2010} propose a projection-based distributed online subgradient descent algorithm, and establish an $\mathcal{O}( {\sqrt T } )$ regret bound for general convex local loss functions. For strongly convex local loss functions, the authors of \cite{Yan2012, Tsianos2012} establish an $\mathcal{O}\big( {\log ( T )} \big)$ regret bound. For time-varying communication topology, the authors of \cite{Hosseini2016} propose a projection-based distributed online weighted dual averaging algorithm, and establish an $\mathcal{O}( {\sqrt T } )$ regret bound for general convex local loss functions. By utilizing proportional-integral distributed feedback on the disagreement among neighboring agents, the authors of \cite{MateosNunez2014} propose a projection-based distributed online proportional-integral subgradient descent algorithm, and establish an~$\mathcal{O}\big( {\log ( T )} \big)$ regret bound for strongly convex local loss functions. 

The aforementioned distributed online algorithms rely on agents exchanging their local data with perfect communication. Consequently, these algorithms encounter significant limitations arising from communication bottlenecks. To overcome the limitations, distributed online algorithms with compressed communication are studied for the fix communication topology in the literature, see \cite{Cao2023a, Tu2022, Ge2023}, and recent survey paper \cite{Cao2023}. For example, the authors of \cite{Cao2023a} propose a decentralized online gradient descent algorithm with compressed communication by introducing an auxiliary variable to estimate the neighbors' decisions at each iteration. They establish an $\mathcal{O}(\sqrt{T})$ network regret bound for general convex local loss functions. Unlike the compression strategy employed in \cite{Cao2023a}, the authors of \cite{Tu2022, Ge2023} introduce two auxiliary variables: the first serves the same purpose as the auxiliary variable in \cite{Cao2023a}, while the second ensures that the first variable does not need to be exchanged. The compression strategy is effective when the communication topology is fixed. However, it becomes ineffective for a time-varying communication topology.

Note that inequality constraints are common in practical applications. However, performing projection operations onto such constraints can result in substantial computational and storage burdens. To deal with this challenge, the authors of~\cite{Yuan2017} consider the idea of long term constraints proposed in~\cite{Mahdavi2012}, where inequality constraints are allowed to be violated temporarily, with the requirement that they are ultimately satisfied over the long term. This violation is measured by a performance metric named constraint violation where the projection onto the non-negative orthant is performed after summing the constraint functions over time. Accordingly, they propose a distributed online primal--dual algorithm and establish an $\mathcal{O}( {{T^{1/2 + c}}} )$ regret bound and an $\mathcal{O}( {{T^{1 - c/2}}} )$ constraint violation bound for general convex local loss and constraint functions, where $c \in ( {0,1/2} )$ is a user-defined parameter. The regret bound is further reduced to $\mathcal{O}( {{T^c}} )$ for strongly convex local loss functions. 
The authors of \cite{Yuan2021b} use performance metric named cumulative constraint violation where the projection onto the non-negative orthant is performed before summing the constraint functions over time, which is proposed in \cite{Yuan2018}. Moreover, they establish an $\mathcal{O}\big( {{T^{\max \{ {c, 1 - c} \}}}} \big)$ regret bound and an $\mathcal{O}( {{T^{1 - c/2}}} )$ cumulative constraint violation bound with $c \in ( {0,1} )$ for quadratic local loss functions and linear constraint functions. However, \cite{Yuan2017, Yuan2021b} only consider static inequality constraints. The authors of~\cite{Yi2023} extend distributed online convex optimization with long-term constraints into the time-varying constraints setting. Moreover, the same network regret and cumulative constraint violation bounds as in \cite{Yuan2021b} are established. However, the distributed online algorithms proposed in \cite{Yuan2021b, Yi2023} are unable to achieve reduced network cumulative constraint violation under Slater's condition. 
Slater's condition is a sufficient condition for strong duality to hold in convex optimization problems \cite{Boyd2004}, and can be leveraged to achieve reduced constraint violation, e.g., \cite{Yu2017, Neely2017}. 
Recently, the authors of \cite{Yi2024} propose a novel distributed online primal--dual algorithm, and establish reduced network cumulative constraint violation bounds under Slater's condition. 

Unlike the aforementioned studies that focus on distributed online convex optimization, the authors of \cite{Lu2021a} investigate distributed online nonconvex optimization where local loss functions are nonconvex. To evaluate algorithm performance, they propose a novel regret metric based on the first-order optimality condition associated with the variational inequality. For this metric, the offline benchmark seeks a stationary point of the cumulative global loss functions across all iterations. Moreover, they establish an $\mathcal{O}(\sqrt{T})$ regret bound for general nonconvex local loss functions. However, \cite{Lu2021a} does not account for inequality constraints and uses perfect communication among agents.

Motivated by the above observations, this paper considers the distributed online nonconvex optimization problem with time-varying constraints. For a time-varying communication topology, we propose a distributed online primal--dual algorithm with compressed communication to efficiently utilize communication resources. Furthermore, base on several classes of appropriately chosen parameter sequences, we analyze how compressed communication influences network regret and cumulative constraint violation. The contributions are as follows.
\begin{itemize}
\item[$\bullet$]
To the best of our knowledge, this paper is among the first to consider (time-varying) inequality constraints for distributed online nonconvex optimization. Compared to \cite{SundharRam2010, Yan2012, Tsianos2012, Hosseini2016, MateosNunez2014, Koppel2015, Shahrampour2017, Yuan2021, Li2022, Cao2023a, Tu2022, Ge2023, Yuan2017, Yuan2021b, Yi2023, Yi2024} which focus on distributed online convex optimization, we consider distributed online nonconvex optimization where the absence of the convexity assumption on local loss functions makes the analysis more challenging. Compared to \cite{Lu2021a} which investigates distributed online nonconvex optimization, we additionally consider time-varying inequality constraints, which complicate both algorithm design and performance analysis. Moreover, similar to \cite{Cao2023a, Tu2022, Ge2023}, we use compressed communication instead of perfect communication used in \cite{Lu2021a}. Different from \cite{Cao2023a, Tu2022, Ge2023} which consider a fixed communication topology, we consider a time-varying communication topology.
\item[$\bullet$]
For the scaling parameter sequence~$\{s_t\}$ produced by $\{1/{t^{\theta _2}}\}$ with ${\theta _2} > {\theta _1}$ and ${\theta _1} \in ( {0,1} )$, we show in Theorem~1 that the proposed algorithm establishes an $\mathcal{O}( {{T^{\max \{ {1 - {\theta _1},1 + {\theta _1} - {\theta _2}} \}}}} )$ network regret bound under ${\theta _1} < {\theta _2} < 1$ and an $\mathcal{O}( {{T^{\max \{ {1 - {\theta _1},{\theta _1}} \}}}} )$ network regret bound under ${\theta _2} \ge 1$, and establishes an $\mathcal{O}( {T^{1 - {\theta _1}/2}} )$ network cumulative constraint violation bound. When ${\theta _2} \ge 1$, these bounds are the same as the results established in \cite{Yi2023, Yi2024} where the local loss functions are convex and perfect communication is used. When Slater's condition holds, we further show in Theorem~1 that the proposed algorithm establishes an reduced $\mathcal{O}( {T^{1 - {\theta _1}}} )$ network cumulative constraint violation bound. This bound is the same as the results established in \cite{Yi2024}.

\item[$\bullet$]
For the scaling parameter sequence~$\{s_t\}$ produced by $\{\mu ^t\}$ with $\mu  \in ( {0,1} )$, we show in Theorem~2 that the proposed algorithm establishes an $\mathcal{O}( \sqrt T )$ network regret bound and an $\mathcal{O}( {T^{3/4}} )$ network cumulative constraint violation bound. These bounds are the same as the results established in Theorem~1 when ${\theta _1} = 1/2$ and ${\theta _2} \ge 1$. Moreover, the network regret bound is the same as the results established in \cite{Lu2021a} where compressed communication and inequality constraints are not considered, as well as the results established in \cite{Cao2023a, Tu2022} where inequality constraints and nonconvex local loss functions are not considered. 
When Slater's condition holds, we further show that in Theorem~2 that the proposed algorithm establishes an reduced $\mathcal{O}( \sqrt T )$ network cumulative constraint violation bound. This bound is the same as the results established in Theorem~1 when ${\theta _1} = 1/2$.
\end{itemize}

The remainder of this paper is organised as follows.
Section~II presents the problem formulation.
Section~III proposes the distributed online primal--dual algorithm with compressed communication, and analyze its network regret and cumulative constraint violation bounds without and with Slater's condition, respectively.
Section~IV provides a simulation example to verify the theoretical results.
Finally, Section~V concludes this paper.
All proofs can be found in Appendix.

\textbf{Notations:} All inequalities and equalities throughout this paper are understood componentwise. 
${\mathbb{N}_ + }$, $\mathbb{R}$, ${\mathbb{R}^p}$ and $\mathbb{R}_ + ^p$ denote the sets of all positive integers, real numbers, $p$-dimensional and nonnegative vectors, respectively. 
Let $\mathbb{X}$ denote a subset of ${\mathbb{R}^p}$.
Given $m$ and $n \in {\mathbb{N}_ + }$, $[ m ]$ denotes the set $\{ {1, \cdot  \cdot  \cdot ,m} \}$, and $[m, n]$ denotes the set $\{ {m, \cdot  \cdot  \cdot ,n} \}$ for $m < n$. Given vectors $x$ and $y$, ${x^T}$ denotes the transpose of the vector $x$, and $\langle {x,y} \rangle $ denotes the standard inner. ${\mathbf{0}_p}$ and ${\mathbf{1}_p}$ denote the $p$-dimensional column vector whose components are all $0$ and $1$, respectively. $\mathrm{col}( {q_1}, \cdot  \cdot  \cdot ,{q_n} )$ denotes the concatenated column vector of ${q_i} \in {\mathbb{R}^{{m_i}}}$ for $i \in [ n ]$. ${\mathbb{B}^p}$ and ${\mathbb{S}^p}$ denote the unit ball and sphere centered around the origin in ${\mathbb{R}^p}$ under Euclidean norm, respectively. For a set $\mathbb{K} \in {\mathbb{R}^p}$ and a vector $ x \in {\mathbb{R}^p}$, ${\mathcal{P}_{\mathbb{K}}}(  x  )$ denotes the projection of the vector $x$ onto the set $\mathbb{K}$, i.e., ${\mathcal{P}_{\mathbb{K}}}( x ) = \arg {\min _{y \in {\mathbb{K}}}}{\| {x - y} \|^2}$, and $[  x  ]_+$ denotes ${\mathcal{P}_{\mathbb{R}_ + ^p}}( x )$. For a function $f$ and a vector $ x $, $\nabla f( x )$ denotes the subgradient of $f$ at $x$.

\section{Problem Formulation}
Consider the distributed online nonconvex optimization problem with time-varying constraints.
At iteration $t$, a network of $n$ agents is modeled by a time-varying directed graph ${\mathcal{G}_t} = ( {\mathcal{V},{\mathcal{E}_t}} )$ with the agent set $\mathcal{V} = [ n ]$ and the edge set ${\mathcal{E}_t} \subseteq \mathcal{V} \times \mathcal{V}$. $( {j,i} ) \in {\mathcal{E}_t}$ indicates that agent $i$ can receive information from agent $j$.
The sets of in- and out-neighbors of agent~$i$ are $\mathcal{N}_i^{\text{in}}( {{\mathcal{G}_t}} ) = \{ {j \in [ n ]|( {j,i} ) \in {\mathcal{E}_t}} \}$ and $\mathcal{N}_i^{\text{out}}( {{\mathcal{G}_t}} ) = \{ {j \in [ n ]|( {i,j} ) \in {\mathcal{E}_t}} \}$, respectively.
Let $\{ {{f_{i,t}}:\mathbb{X} \to \mathbb{R}} \}$ and $\{ {{g_{i,t}}:\mathbb{X} \to {\mathbb{R}^{{m_i}}}} \}$ be the sequences of nonconvex local loss and convex local constraint functions, respectively, where ${m_i}$ is a positive integer and ${g_{i,t}} \le {\mathbf{0}_{{m_i}}}$ is the local constraint.
Each agent $i$ selects a local decisions $\{ {{x_{i,t}} \in \mathbb{X}} \}$ without prior access to $\{ {{f_{i,t}}} \}$ and $\{ {{g_{i,t}}} \}$. 
Upon selection, the nonconvex local loss function $\{ {{f_{i,t}}} \}$ and convex local constraint function $\{ {{g_{i,t}}} \}$ are privately revealed to the agent.
The goal of the agent is to choose the decision sequence $\{ {{x_{i,t}}} \}$ for $i \in [n]$ and $t \in [T]$ such that both network regret
\begin{flalign}
{\rm{Net}\mbox{-}\rm{Reg}}( T ) &:= \frac{1}{n}\sum\limits_{i = 1}^n \Big( \sum\limits_{t = 1}^T {\langle {\nabla {f_t}( {{x_{i,t}}} ),{x_{i,t}}} \rangle } 
- \mathop {\inf }\limits_{x \in \mathcal{X}_T} \Big\langle {\sum\limits_{t = 1}^T {\nabla {f_t}( {{x_{i,t}}} )} ,x} \Big\rangle \Big), \label{regret-eq1}
\end{flalign}
and network cumulative constraint violation
\begin{flalign}
{\rm{Net}\mbox{-}\rm{CCV}}( T ) &:= \frac{1}{n}\sum\limits_{i = 1}^n {\sum\limits_{t = 1}^T {\| {{{[ {{g_t}( {{x_{i,t}}} )} ]}_ + }} \|} }, \label{CCV-eq2}
\end{flalign}
increase sublinearly, where ${f_t}( x ) = \frac{1}{n}\sum\nolimits_{j = 1}^n {{f_{j,t}}( x )} $ is the global loss function of the network at iteration~$t$, ${\mathcal{X}_T} = \{ { x :x \in \mathbb{X}, {g_t}( x ) \le {\mathbf{0}_m},\forall t \in [ T ]} \}$ is the feasible set, and ${g_t}( x ) = {\rm{col}}\big( {{g_{1,t}}( x ), \cdot  \cdot  \cdot ,{g_{n,t}}( x )} \big) \in {\mathbb{R}^m}$ with $m = \sum\nolimits_{i = 1}^n {{m_i}} $ is the global constraint function of the network at iteration~$t$. Similar to existing literature on distributed online convex optimization with time-varying constraints, e.g., \cite{Yi2023, Yi2024}, we assume that the feasible set ${\mathcal{X}_T}$ is nonempty for all $T \in {\mathbb{N}_ + }$, ensuring the existence of the offline optimal static decision.

\cite{Lu2021a} proposes an individual regret metric ${\rm{Net}\mbox{-}\rm{Reg}}( T ) = \mathop {\max }\limits_{x \in \mathbb{X}} \Big( {\sum\nolimits_{t = 1}^T {\langle {\nabla {f_{i,t}}( {{x_{i,t}}} ),{x_{i,t}} - x} \rangle } } \Big)$ for distributed online nonconvex optimization by utilizing the first-order optimality condition associated with the variational inequality. In this paper, we consider time-varying inequality constraints, which cause the feasible set to become ${\mathcal{X}_T}$ instead of $\mathbb{X}$. Furthermore, the objective is to optimize the network-wide accumulated loss over all iterations, rather than the local one as considered in \cite{Lu2021a}. 
Therefore, we made a slight modification to the form of the regret metric in \cite{Lu2021a}, transforming it into the form presented in \eqref{regret-eq1}.


The following commonly used assumptions are made throughout this paper.
\begin{assumption}
The set $\mathbb{X}$ is convex and closed.
Moreover, it is bounded by $R( \mathbb{X} )$, i.e., for any $x \in \mathbb{X}$
\begin{flalign}
\| x \| \le R( \mathbb{X}). \label{ass1-eq1}
\end{flalign}
\end{assumption}


\begin{assumption}
For all $i \in [n]$, $t \in {\mathbb{N}_ + }$, the subgradients $\nabla {f_{i,t}}( x )$ and $\nabla {g_{i,t}}( x )$ exist. Moreover, there exist constants ${G_1}$ and ${G_2}$ such that
\begin{subequations}
\begin{flalign}
\| {\nabla {f_{i,t}}( x )} \| &\le {G_1}, \label{ass4-eq1a}\\
\| {\nabla {g_{i,t}}( x )} \| &\le {G_2}, x \in \mathbb{X}. \label{ass4-eq1b}
\end{flalign}
\end{subequations}
\end{assumption}

Due to the convexity of the local constraint function ${{g_{i,t}}}$, from Assumption~2 and Lemma~2.6 in \cite{ShalevShwartz2012}, for all $i \in [n]$, $t \in {\mathbb{N}_ + }$, we have
\begin{flalign}
\| {{g_{i,t}}( x ) - {g_{i,t}}( y )} \| &\le {G_2}\| {x - y} \|, x,y \in \mathbb{X}. \label{ass4-eq2}
\end{flalign}

\begin{assumption}
For all $i \in [n]$, $t \in {\mathbb{N}_ + }$, there exists a constant ${L}$ such that
\begin{flalign}
\| {\nabla {f_{i,t}}( x ) - \nabla {f_{i,t}}( y )} \| \le {L}\| {x - y} \|, x,y \in \mathbb{X}. \label{ass5-eq1}
\end{flalign}
\end{assumption}

\begin{assumption}
For $t \in {\mathbb{N}_ + }$, the time-varying directed graph $\mathcal{G}_t$ satisfies that

\noindent (i) There exists a constant $w  \in ( {0,1} )$ such that ${[ {{W_t}} ]_{ij}} \ge w$ if $( {j,i} ) \in {\mathcal{E}_t}$ or $i = j$, and ${[ {{W_t}} ]_{ij}} = 0$ otherwise.

\noindent (ii) The mixing matrix ${W_t}$ is doubly stochastic, i.e., ${\sum\nolimits_{i = 1}^n {[ {{W_t}} ]} _{ij}} = {\sum\nolimits_{j = 1}^n {[ {{W_t}} ]} _{ij}} = 1$, $\forall i,j \in [ n ]$.

\noindent (iii) There exists an integer $B > 0$ such that the time-varying directed graph $( {\mathcal{V}, \cup _{l = 0}^{B - 1}{\mathcal{E} _{t + l}}} )$ is strongly connected.
\end{assumption}

\begin{assumption}
The compressor $\mathcal{C}:{\mathbb{R}^p} \to {\mathbb{R}^p}$ satisfies
\begin{flalign}
{\mathbf{E}_\mathcal{C}}[ {\| {\mathcal{C}( x ) - x} \|_d^2} ] \le C, \forall x \in {\mathbb{R}^p}, \label{compre-eq1}
\end{flalign}
for some real norm parameter $d \ge 1$ and constant $C \ge 0$. Here ${\mathbf{E}_\mathcal{C}}$ denotes the expectation over the internal randomness of the stochastic compression operator $\mathcal{C}$.
\end{assumption}

\begin{assumption}
There exists a point ${x_s} \in \mathbb{X}$ and a positive constant ${\varsigma _s}$ such that
\begin{flalign}
{g_t}( {{x_s}} ) \le  - {\varsigma _s}{\mathbf{1}_m},t \in {\mathbb{N}_ + }. \label{ass8-eq1}
\end{flalign}
\end{assumption}

\section{Distributed Online Primal--Dual Algorithm with \\Compressed Communication}
\subsection{Algorithm Description}
To achieve reduced network cumulative constraint violation, \cite{Yi2024} proposes a distributed online primal--dual algorithm for distributed online convex optimization with time-varying constraints. Here, we first give a subgradient descent variant of this algorithm in the following: 
\begin{subequations}
\begin{flalign}
 {x_{i,t}} &= \sum\limits_{j = 1}^n {{{[{W_t}]}_{ij}}{z_{j,t}}}, \label{Algorithm0-eq1} \\
 {v_{i,t + 1}} &= {\gamma _t}{[ {{g_{i,t}}( {{x_{i,t}}} )} ]_ + }, \label{Algorithm0-eq2} \\
 {z_{i,t + 1}} &= {\mathcal{P}_{\mathbb{X}}}( {{x_{i,t}} - {\alpha _t}{{ \omega }_{i,t + 1}}} ), \label{Algorithm0-eq3} \\
 {{\omega} _{i,t + 1}} &= {\nabla} {f_{i,t}}({x_{i,t}}) + \big({\nabla} {{g_{i,t}}({x_{i,t}}) }\big)^{T}{v_{i,t+1}}, \label{Algorithm0-eq4} 
\end{flalign} \label{Algorithm0} 
\end{subequations}
where ${{\gamma _t}}$ is the regularization parameter; and ${{\alpha _t}}$ is the stepsize.
To improve communication efficiency, the distributed online primal--dual algorithm with compressed communication is proposed by using the class of compressors satisfying Assumption~5, which is presented in pseudo-code as Algorithm~1. 
\begin{algorithm}[!t]
  \caption{Distributed Online Primal--Dual Algorithm with Compressed Communication} 
  \begin{algorithmic}
  \renewcommand{\algorithmicrequire}{\textbf{Input:}}
  \REQUIRE
   non-increasing stepsize sequence $\{ {\alpha _t}\} \subseteq ( {0, + \infty })$, non-increasing scaling parameter sequence $\{ {{s _t}} \} \subseteq ( {0, + \infty })$, and non-decreasing regularization parameter sequence $\{ {\gamma _t}\} \subseteq ( {0, + \infty })$.
  \renewcommand{\algorithmicrequire}{\textbf{Initialize:}}
  \REQUIRE
      ${\hat{z}_{j,0}} = {\mathbf{0}_p}$ for $j \in [ n ]$, and ${z_{i,1}} \in {\mathbb{X}}$, $\forall i \in [ n ]$.
    \FOR {$t = 1, \cdot  \cdot  \cdot $}
    \FOR {$i = 1,\cdot  \cdot  \cdot,n$ in parallel}
    \STATE Broadcast $\mathcal{C}\big( {( {{z_{i,t}} - {\hat{z}_{i,t-1}}} )/{s_{t}}} \big)$ to $\mathcal{N}_i^{\text{out}}( {{\mathcal{G}_t}} )$ and receive $\mathcal{C}\big( {( {{z_{j,t}} - {\hat{z}_{j,t-1}}} )/{s_{t}}} \big)$ from $j \in \mathcal{N}_i^{\text{in}}( {{\mathcal{G}_t}} )$.
    \STATE Update
     \begin{flalign}
      {{\hat z}_{j,t}} = {\mathcal{P}_\mathbb{X}}\big( {{{\hat z}_{j,t - 1}} + {s_t}{\cal C}(({z_{j,t}} - {{\hat z}_{j,t - 1}})/{s_t})} \big), 
         j \in \big\{\mathcal{N}_i^{\text{in}}( {{\mathcal{G}_t}} ) \cup \{i\}\big\}. \label{Algorithm1-eq1}
    \end{flalign}
    \STATE Select
     \begin{flalign}
       {x_{i,t}} = \sum\limits_{j = 1}^n {{{[ {{W_t}} ]}_{ij}}{{\hat z}_{j,t}}}. \label{Algorithm1-eq3}
    \end{flalign}
    \STATE Observe $\nabla {f_{i,t}}( {{x_{i,t}}} )$, $\nabla {g_{i,t}}( {{x_{i,t}}} )$, and ${g_{i,t}}( {{x_{i,t}}} )$.
    \STATE Update
    \begin{subequations}
     \begin{flalign}
       {v_{i,t + 1}} &= {\gamma _t}{[ {{g_{i,t}}( {{x_{i,t}}} )} ]_ + }, \label{Algorithm1-eq4} \\
       {{\omega} _{i,t + 1}} &= \nabla {f_{i,t}}({x_{i,t}}) + \big({\nabla} {{g_{i,t}}({x_{i,t}}) }\big)^{T}{v_{i,t+1}}, \label{Algorithm1-eq5} \\
       {z_{i,t + 1}} &= {\mathcal{P}_{\mathbb{X}}}( {{x_{i,t}} - {\alpha _t}{{\omega }_{i,t + 1}}} ). \label{Algorithm1-eq6} 
      \end{flalign}
     \end{subequations}
    \ENDFOR
    \ENDFOR
  \renewcommand{\algorithmicensure}{\textbf{Output:}}
  \ENSURE
      $\{ x_{i,t} \}$.
  \end{algorithmic}
\end{algorithm}

\subsection{Performance Analysis}
In this section, we establish network regret and cumulative constraint violation bounds for Algorithm~1 in the following theorems without and with Slater's condition, respectively. Firstly, we choose the scaling parameter sequence~$\{s_t\}$ produced by $\{1/{t^{\theta _2}}\}$ in the following theorem. 

\begin{theorem}\label{thm1}
Suppose Assumptions 1--5 hold. For all $i \in [ n ]$, let $\{ {{x_{i,t}}} \}$ be the sequences generated by Algorithm~1 with
\begin{flalign}
{\alpha _t} = \frac{{\alpha _0}}{{{t^{{\theta _1}}}}}, {\gamma _t} = \frac{{{\gamma _0}}}{{{\alpha _t}}}, {s_t} = \frac{{s_0}}{{{t^{{\theta _2}}}}}, \label{theorem1-eq1}
\end{flalign}
where ${\theta _1} \in ( {0,1} )$, ${\alpha _0} > 0$, ${\gamma _0} \in ( {0,1/( {4G_2^2} )} ]$, ${s_0} > 0$, and ${\theta _2} > {\theta _1}$ are constants. Then, for any $T \in {\mathbb{N}_ + }$,
\begin{flalign}
&\mathbf{E}_\mathcal{C}[{{\rm{Net}\mbox{-}\rm{Reg}}( T )}] = \left\{ \begin{array}{l}
\mathcal{O}( {{T^{\max \{ {1 - {\theta _1},1 + {\theta _1} - {\theta _2}} \}}}} ), {\rm{if}}\;{\theta _1} < {\theta _2} < 1, \\
\mathcal{O}( {{T^{\max \{ {1 - {\theta _1},{\theta _1}} \}}}} ), {\rm{if}}\;{\theta _2} \ge 1,
\end{array} \right. \label{theorem1-eq2}\\
&\mathbf{E}_\mathcal{C}[{{\rm{Net} \mbox{-} \rm{CCV}}( T )}] = \mathcal{O}( {T^{1 - {\theta _1}/2}} ). \label{theorem1-eq3}
\end{flalign}
Moreover, if Assumption~6 also holds, then
\begin{flalign}
\mathbf{E}_\mathcal{C}[{{\rm{Net} \mbox{-} \rm{CCV}}( T )}] &= \mathcal{O}( {T^{1 - {\theta _1}}} ). \label{theorem1-eq4}
\end{flalign}
\end{theorem}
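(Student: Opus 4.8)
The plan is to isolate the effects of compression and of imperfect consensus in two preliminary estimates and then run a primal--dual potential argument on top of them. First I would control the compression error: since $z_{j,t}\in\mathbb{X}$ and $\mathcal{P}_\mathbb{X}$ is nonexpansive, \eqref{Algorithm1-eq1} gives $\|\hat z_{j,t}-z_{j,t}\|\le s_t\,\|\mathcal{C}(a)-a\|$ with $a=(z_{j,t}-\hat z_{j,t-1})/s_t$, so Assumption~5 (after passing from $\|\cdot\|_d$ to $\|\cdot\|$ by norm equivalence) yields $\mathbf E_\mathcal{C}[\|\hat z_{j,t}-z_{j,t}\|^2]=\mathcal{O}(s_t^2)$. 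Second, writing $\bar x_t=\frac1n\sum_i x_{i,t}$, which equals $\frac1n\sum_j\hat z_{j,t}$ by the column-stochasticity in Assumption~4(ii), I would unroll the stacked recursion $z_{\cdot,t+1}=W_t z_{\cdot,t}+W_t\varepsilon_{\cdot,t}+e_{\cdot,t+1}$ with $\varepsilon_{j,t}=\hat z_{j,t}-z_{j,t}$ and $e_{j,t+1}=z_{j,t+1}-x_{j,t}$, and invoke the geometric decay $\|[W_{t:s}]_{ij}-1/n\|\le\Gamma\lambda^{t-s}$ guaranteed by Assumption~4. This writes the cumulative consensus error $\sum_t\sum_i\|x_{i,t}-\bar x_t\|$ as a geometric convolution of the per-step perturbations $\|\varepsilon_{j,s}\|=\mathcal{O}(s_s)$ and $\|e_{j,s+1}\|\le\alpha_s\|\omega_{j,s+1}\|\le\alpha_s G_1+\gamma_0 G_2\|[g_{j,s}(x_{j,s})]_+\|$, using $\alpha_s\gamma_s=\gamma_0$.

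The core is a one-step inequality for the comparator potential. For any $x\in\mathcal{X}_T$, expanding \eqref{Algorithm1-eq6} by nonexpansiveness, substituting $\omega_{j,t+1}$ from \eqref{Algorithm1-eq5}, using convexity of $g_{j,t}$ together with $v_{j,t+1}\ge\mathbf 0$ (so $\langle\nabla g_{j,t}(x_{j,t})(x_{j,t}-x),v_{j,t+1}\rangle\ge\langle g_{j,t}(x_{j,t})-g_{j,t}(x),v_{j,t+1}\rangle$), the identity $\langle v_{j,t+1},g_{j,t}(x_{j,t})\rangle=\gamma_t\|[g_{j,t}(x_{j,t})]_+\|^2$, the feasibility bound $\langle v_{j,t+1},g_{j,t}(x)\rangle\le0$, and finally $\gamma_0\le1/(4G_2^2)$ to dominate the $\alpha_t^2\|\omega_{j,t+1}\|^2$ term, I would obtain
\[
2\alpha_t\langle\nabla f_{j,t}(x_{j,t}),x_{j,t}-x\rangle + c_0\gamma_0\|[g_{j,t}(x_{j,t})]_+\|^2 \le \|x_{j,t}-x\|^2-\|z_{j,t+1}-x\|^2+2\alpha_t^2G_1^2,
\]
with $c_0=2(1-\gamma_0 G_2^2)\ge 3/2$. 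This single inequality feeds both bounds.

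For the constraint violation I would sum this over $j,t$ without rescaling: the potential telescopes after replacing $\|x_{j,t}-x\|^2$ by $\sum_k\|z_{k,t}-x\|^2$ plus an $\mathcal{O}(n s_t)$ compression remainder (convexity of $\|\cdot\|^2$ and the first estimate), the loss term is bounded crudely by $\mathcal{O}(n\sum_t\alpha_t)=\mathcal{O}(nT^{1-\theta_1})$, and $\sum_t s_t,\sum_t\alpha_t^2$ are of lower order, giving $\sum_t\sum_j\|[g_{j,t}(x_{j,t})]_+\|^2=\mathcal{O}(T^{1-\theta_1})$. Cauchy--Schwarz over $t$ and $j$ gives $\sum_t\sum_j\|[g_{j,t}(x_{j,t})]_+\|=\mathcal{O}(T^{1-\theta_1/2})$; combining with $\|[g_{j,t}(x_{i,t})]_+\|\le\|[g_{j,t}(x_{j,t})]_+\|+G_2\|x_{i,t}-x_{j,t}\|$ from \eqref{ass4-eq2} and the (likewise $\mathcal{O}(T^{1-\theta_1/2})$) consensus estimate yields \eqref{theorem1-eq3}. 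For the regret I instead divide the one-step inequality by $2\alpha_t$ before summing and apply Abel summation to $\sum_t\frac1{2\alpha_t}(\|z_{\cdot,t}-x\|^2-\|z_{\cdot,t+1}-x\|^2)$; since $1/\alpha_t$ is increasing and the potential is $\mathcal{O}(R(\mathbb{X})^2)$, this contributes $\mathcal{O}(n/\alpha_T)=\mathcal{O}(nT^{\theta_1})$, the compression remainder now enters as $\sum_t s_t/\alpha_t=\mathcal{O}(T^{1+\theta_1-\theta_2})$, and $\sum_t\alpha_t$ contributes $\mathcal{O}(T^{1-\theta_1})$, bounding the agentwise linearized quantity $\sum_t\sum_j\langle\nabla f_{j,t}(x_{j,t}),x_{j,t}-x\rangle$ uniformly over $x\in\mathcal{X}_T$.

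The main obstacle is converting this agentwise quantity into ${\rm{Net}\mbox{-}\rm{Reg}}(T)$, whose comparator $x_i^\ast\in\arg\min_{x\in\mathcal{X}_T}\langle\sum_t\nabla f_t(x_{i,t}),x\rangle$ and whose summand $\langle\nabla f_t(x_{i,t}),x_{i,t}-x_i^\ast\rangle=\frac1n\sum_j\langle\nabla f_{j,t}(x_{i,t}),x_{i,t}-x_i^\ast\rangle$ use the global gradient at agent $i$'s decision, not the local gradients at the local decisions that drive the algorithm. I would replace $\nabla f_{j,t}(x_{i,t})$ by $\nabla f_{j,t}(x_{j,t})$ (gradient Lipschitzness, Assumption~3) and $x_{i,t}$ by $x_{j,t}$ (Assumption~1 and \eqref{ass4-eq1a}), then apply the agentwise bound with the single comparator $x=x_i^\ast\in\mathcal{X}_T$; the price is a consensus term of order $\sum_t\sum_{i,j}\|x_{i,t}-x_{j,t}\|$, which naively is $\mathcal{O}(T^{1-\theta_1/2})$ and would overwhelm the target rate. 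The key step is therefore to retain (rather than discard) the favorable term $c_0\gamma_0\sum_t\sum_j\|[g_{j,t}(x_{j,t})]_+\|^2$ in the regret chain and to absorb the dual-driven part $\gamma_0 G_2\sum_s\|[g_{j,s}(x_{j,s})]_+\|$ of the consensus error against it by Young's inequality $\|[g]_+\|\le\tfrac{\epsilon}{2}\gamma_s\|[g]_+\|^2+\tfrac1{2\epsilon\gamma_s}$; choosing $\epsilon$ small leaves only the residual $\mathcal{O}(\sum_s 1/\gamma_s)=\mathcal{O}(\sum_s\alpha_s)=\mathcal{O}(T^{1-\theta_1})$. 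Collecting the contributions $T^{\theta_1}$, $T^{1-\theta_1}$, and $T^{1+\theta_1-\theta_2}$ and distinguishing $\theta_2\ge1$ from $\theta_2<1$ yields \eqref{theorem1-eq2}. Finally, under Slater's condition (Assumption~6) I would repeat the constraint analysis with $x=x_s$ but keep the term $-2\alpha_t\langle v_{j,t+1},g_{j,t}(x_s)\rangle\ge2\alpha_t\varsigma_s\|v_{j,t+1}\|_1\ge2\gamma_0\varsigma_s\|[g_{j,t}(x_{j,t})]_+\|$; summing and telescoping gives the sharper $\sum_t\sum_j\|[g_{j,t}(x_{j,t})]_+\|=\mathcal{O}(T^{1-\theta_1})$, which upgrades the consensus estimate to $\mathcal{O}(T^{1-\theta_1})$ as well, hence \eqref{theorem1-eq4}.
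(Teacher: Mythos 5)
Your proposal is correct and follows essentially the same route as the paper: per-step $\mathcal{O}(s_t)$ compression error from Assumption~5 plus nonexpansiveness of the projection, consensus error via the geometric mixing bound of Assumption~4, a primal--dual one-step inequality in which $\gamma_0 \le 1/(4G_2^2)$ cancels the $\alpha_t^2\|\omega_{i,t+1}\|^2$ contribution and a retained quadratic credit absorbs the dual-driven perturbation $\gamma_0 G_2\|[g_{i,t}(x_{i,t})]_+\|$, different weightings ($1/\alpha_t$ versus $1$, equivalently $1$ versus $1/\gamma_t$) of that inequality for the regret and the violation, Cauchy--Schwarz to pass from $\sum_t\|[g]_+\|^2=\mathcal{O}(T^{1-\theta_1})$ to the $T^{1-\theta_1/2}$ rate, and Slater's point as comparator to obtain a linear handle on $\|[g]_+\|$ for the improved $T^{1-\theta_1}$ rate. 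The only bookkeeping difference is that you absorb the constraint-driven part of the consensus error against the retained constraint quadratic $\gamma_t\|[g_{i,t}(x_{i,t})]_+\|^2$, whereas the paper keeps the projection residual $\|x_{i,t}-z_{i,t+1}\|^2/(4\alpha_t)$ on the left of \eqref{Lemma5-eq1} and absorbs the whole of $\|z_{i,t+1}-x_{i,t}\|$ against it in \eqref{Lemma6-proof-i-eq6}; both Young-type absorptions cost $\mathcal{O}(\sum_{t=1}^T\alpha_t)=\mathcal{O}(T^{1-\theta_1})$ and yield identical rates.
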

\begin{remark}\label{rem2}
We show in Theorem~1 that Algorithm~1 establishes sublinear network regret and cumulative constraint violation bounds as in \eqref{theorem1-eq2}--\eqref{theorem1-eq3}. These bounds characterize the impact of compressed communication on the network regret and cumulative constraint violation bounds, which is captured by ${\theta _2}$. When ${\theta _2} \ge 1$, they are the same as the state-of-the-art results established by the distributed online algorithms without compressed communication in \cite{Yi2023, Yi2024}. In addition, when Slater's condition holds, the network cumulative constraint violation bound is further reduced as in \eqref{theorem1-eq4}. The bound remains the same as the results established by the distributed online algorithm with perfect communication in \cite{Yi2024}.
\end{remark}

We then choose the scaling parameter sequence~$\{s_t\}$ produced by $\{\mu ^t\}$, which is also adopted by the distributed algorithms in \cite{Yi2022, Ge2023}.
\begin{theorem}\label{thm2}
Suppose Assumptions 1--5 hold. For all $i \in [ n ]$, let $\{ {{x_{i,t}}} \}$ be the sequences generated by Algorithm~1 with
\begin{flalign}
{\alpha _t} = {\alpha _0}\sqrt {\frac{{{\Psi _t}}}{t}}, {\gamma _t} = \frac{{{\gamma _0}}}{{{\alpha _t}}}, {s_t} = {s_0}{\mu ^t}, \label{theorem2-eq1}
\end{flalign}
where ${\Psi _t} = \sum\nolimits_{k = 1}^t {{\mu ^k}}$, ${\alpha _0} > 0$, ${\gamma _0} \in ( {0,1/( {4G_2^2} )} ]$, ${s_0} > 0$, and $\mu  \in ( {0,1} )$ are constants. Then, for any $T \in {\mathbb{N}_ + }$,
\begin{flalign}
&\mathbf{E}_\mathcal{C}[{{\rm{Net}\mbox{-}\rm{Reg}}( T )}] = \mathcal{O}( \sqrt T ), \label{theorem2-eq2}\\
&\mathbf{E}_\mathcal{C}[{{\rm{Net} \mbox{-} \rm{CCV}}( T )}] = \mathcal{O}( {T^{3/4}} ). \label{theorem2-eq3}
\end{flalign}
Moreover, if Assumption~6 also holds, then
\begin{flalign}
\mathbf{E}_\mathcal{C}[{{\rm{Net} \mbox{-} \rm{CCV}}( T )}] &= \mathcal{O}( \sqrt T ). \label{theorem2-eq4}
\end{flalign}
\end{theorem}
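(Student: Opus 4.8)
The plan is to run the proof of Theorem~1 at the parameter regime $\theta_1 = 1/2$ but with a super-polynomially decaying scaling sequence, reusing the master drift inequality and the consensus/compression estimates that underlie Theorem~1. The first thing I would record is that $\Psi_t = \sum_{k=1}^{t}\mu^k$ obeys $\mu \le \Psi_t \le \mu/(1-\mu)$ for every $t \ge 1$, so $\alpha_t = \alpha_0\sqrt{\Psi_t/t}$ is pinned between constant multiples of $t^{-1/2}$; this makes $\alpha_t$ behave exactly like the $\theta_1 = 1/2$ stepsize, yielding $\sum_{t=1}^{T}\alpha_t = \mathcal{O}(\sqrt{T})$, $1/\alpha_T = \mathcal{O}(\sqrt{T})$, and $\gamma_t = \gamma_0/\alpha_t = \Theta(\sqrt{t})$. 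Since $s_t = s_0\mu^t$ decays faster than any $t^{-\theta_2}$, it plays the role of the $\theta_2 \ge 1$ case of Theorem~1, which is exactly why the two theorems produce the same rates.

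For the regret I would exploit that \eqref{regret-eq1} is built from the linearisation $\langle\nabla f_t(x_{i,t}),\cdot\rangle$, which is linear --- hence convex --- in the comparator $x$; this is the device that lets the nonconvexity of $f_t$ sit outside the analysis, so the convex-case argument of \cite{Yi2024} carries over once the surrogate loss is used, the point being that the convex proof already bounds $\sum_t\langle\nabla f_t,x_{i,t}-x\rangle$ as an intermediate quantity and we simply declare that to be the regret. Starting from \eqref{Algorithm1-eq6} and nonexpansiveness of $\mathcal{P}_\mathbb{X}$, I would derive the per-step drift $\|z_{i,t+1}-x\|^2 \le \|x_{i,t}-x\|^2 - 2\alpha_t\langle\omega_{i,t+1},x_{i,t}-x\rangle + \alpha_t^2\|\omega_{i,t+1}\|^2$, split $\omega_{i,t+1}$ through \eqref{Algorithm1-eq5}, and invoke convexity of $g_{i,t}$ to replace $\langle\nabla g_{i,t}(x_{i,t}),x_{i,t}-x\rangle$ by $g_{i,t}(x_{i,t})-g_{i,t}(x)$. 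Telescoping $\|x_{i,t}-x\|^2$ over $t$ then leaves a leading $R(\mathbb{X})^2/\alpha_T = \mathcal{O}(\sqrt{T})$ term, a $G_1^2\sum_t\alpha_t = \mathcal{O}(\sqrt{T})$ term, and a constraint-coupling term $\sum_t\alpha_t^2\|(\nabla g_{i,t})^{T}v_{i,t+1}\|^2$ that is neutralised using $\alpha_t\gamma_t = \gamma_0$ together with $\gamma_0 \le 1/(4G_2^2)$, so its $v$-dependent part is absorbed into a negative dual-drift term instead of accumulating.

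The compressed-communication terms are where $s_t = s_0\mu^t$ pays off. Using nonexpansiveness of $\mathcal{P}_\mathbb{X}$ in \eqref{Algorithm1-eq1} and the fact that the noiseless recursion would return $\hat{z}_{j,t}=z_{j,t}$, I would bound $\mathbf{E}_\mathcal{C}[\|\hat{z}_{j,t}-z_{j,t}\|^2] \le \kappa\,C\,s_t^2$ via Assumption~5 (with $\kappa$ a fixed $\|\cdot\|_d$-to-$\|\cdot\|$ equivalence constant); since $\sum_{t=1}^{T}s_t^2 = s_0^2\sum_t\mu^{2t}$ is a convergent geometric series, the entire compression error aggregates to $\mathcal{O}(1)$, strictly better than the polynomial case. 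Feeding this into the time-varying consensus contraction afforded by Assumption~4 (double stochasticity plus $B$-strong-connectivity) bounds $\sum_t\mathbf{E}_\mathcal{C}[\|x_{i,t}-\bar{x}_t\|]$, with $\bar{x}_t = \tfrac{1}{n}\sum_j\hat{z}_{j,t}$, by a constant up to lower-order terms, so consensus never enters the leading rate. For the CCV I would first observe $\|[g_t(x_{i,t})]_+\|^2 = \sum_j\|[g_{j,t}(x_{i,t})]_+\|^2$, use \eqref{ass4-eq2} to swap $x_{i,t}$ for $x_{j,t}$ inside $g_{j,t}$ at the cost of a consensus term $G_2\|x_{i,t}-x_{j,t}\|$, then convert the diagonal pieces via $[g_{j,t}(x_{j,t})]_+ = v_{j,t+1}/\gamma_t$ from \eqref{Algorithm1-eq4}; summing splits the CCV into a dual sum $\sum_t\|v_{j,t+1}\|/\gamma_t$, giving $\mathcal{O}(T^{3/4})$ in general and $\mathcal{O}(\sqrt{T})$ once Assumption~6 is invoked (testing the drift against the strictly feasible $x_s$ manufactures a term $-\varsigma_s\|v_{j,t+1}\|$ with negative coefficient), plus an $\mathcal{O}(1)$ consensus sum.

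The step I expect to be the main obstacle is controlling the interaction between the exponentially small $s_t$, the $\Theta(\sqrt{t})$-growing dual variables $v_{i,t+1}$, and the time-varying consensus error: substituting the recovered estimates $\hat{z}_{j,t}$ for $z_{j,t}$ inside the drift spawns cross-terms such as $s_t\|v_{i,t+1}\|$ and $\alpha_t s_t\|\omega_{i,t+1}\|$, and I must verify that the geometric decay of $s_t$ strictly dominates the polynomial growth of $\gamma_t$ and $\|v_{i,t+1}\|$ so that these mixed polynomial-times-geometric sums are $\mathcal{O}(\sqrt{T})$ and get absorbed rather than degrading the rate. Re-deriving the consensus contraction for this specific $(\alpha_t,s_t)$ pair and bookkeeping those cross-terms is where the bulk of the work lies.
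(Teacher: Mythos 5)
Your overall strategy is the paper's own: Theorem~2 is proved by reusing the master inequalities of Lemmas~4--7 (which hold for any nonincreasing $\{\alpha_t\}$, $\{s_t\}$ with $\gamma_t=\gamma_0/\alpha_t$) and then simply evaluating the parameter sums $\sum_t\alpha_t=\mathcal{O}(\sqrt T)$, $1/\alpha_T=\mathcal{O}(\sqrt T)$, $\sum_t s_t=\mathcal{O}(1)$, $\sum_t s_t/\alpha_t\le\frac{s_0}{\alpha_0}\sum_t\mu^{t/2}=\mathcal{O}(1)$ (using $\Psi_t\ge t\mu^t$), $\sum_t s_t^2=\mathcal{O}(1)$, $\sum_t 1/\gamma_t=\mathcal{O}(\sqrt T)$, and $\sum_t\gamma_0/\gamma_t^2=\mathcal{O}(\log T)$. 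Your observation that $\mu\le\Psi_t\le\mu/(1-\mu)$ pins $\alpha_t$ to $\Theta(t^{-1/2})$ and that the geometric $s_t$ subsumes the $\theta_2\ge1$ regime of Theorem~1 is exactly the right reduction, and your handling of the compression error through Assumption~5 and norm equivalence, the $\gamma_0\le 1/(4G_2^2)$ absorption, and the Slater argument against $x_s$ all track the paper's Lemmas~4--7 and the proof of Theorem~1.

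Two load-bearing steps are missing from the outline, however. First, the exponent $3/4$: the drift/dual machinery does not control $\sum_t\|v_{j,t+1}\|/\gamma_t$ directly; what it controls is $\sum_{i,t}v_{i,t+1}^T g_{i,t}(x_{i,t})/\gamma_t=\sum_{i,t}\|[g_{i,t}(x_{i,t})]_+\|^2=\mathcal{O}(\sqrt T)$, i.e.\ the \emph{squared} violations (the paper's \eqref{Lemma5-eq2} and \eqref{Lemma6-proof-ii-eq5}). Passing from this to $\mathcal{O}(T^{3/4})$ for the unsquared sum requires the H\"older step $\big(\frac1n\sum_{i,t}\mathbf{E}_\mathcal{C}[\|{[g_t(x_{i,t})]_+}\|]\big)^2\le\frac Tn\sum_{i,t}\mathbf{E}_\mathcal{C}[\|{[g_t(x_{i,t})]_+}\|^2]$ as in \eqref{Lemma6-proof-ii-eq6}; as written, your claim that the dual sum ``gives $\mathcal{O}(T^{3/4})$'' is asserted rather than derived, and it is precisely this step that turns the $\sqrt T$ bound on squares into the $T^{3/4}$ rate. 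Second, the regret \eqref{regret-eq1} is defined with the \emph{global} gradient $\nabla f_t(x_{i,t})=\frac1n\sum_j\nabla f_{j,t}(x_{i,t})$, whereas the drift inequality obtained from \eqref{Algorithm1-eq6} only controls $\langle\nabla f_{i,t}(x_{i,t}),x_{i,t}-y\rangle$ with the \emph{local} gradient. Bridging the two costs a term $\frac1{n^2}\sum_{i,j,t}\|\nabla f_{j,t}(x_{i,t})-\nabla f_{j,t}(x_{j,t})\|\,\|\bar z_t-y\|$, which is exactly where Assumption~3 (the $L$-Lipschitz gradients) combines with the consensus bound \eqref{Lemma5-eq3} (the paper's \eqref{Lemma6-proof-i-eq2}--\eqref{Lemma6-proof-i-eq3}); your outline never invokes Assumption~3, so this step is absent. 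Neither gap requires a new idea---both are filled by the paper's Lemmas~6--7---but without them the claimed rates do not yet follow from what you have written.
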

\begin{remark}\label{rem3}
We show in Theorem~2 that Algorithm~1 establishes an $\mathcal{O}( \sqrt T )$ network regret bound as in \eqref{theorem2-eq2} and an $\mathcal{O}( {T^{3/4}} )$ cumulative constraint violation bound as in \eqref{theorem2-eq3}. These bounds are the same as the results established in \eqref{theorem1-eq2}--\eqref{theorem1-eq3} with ${\theta _1} = 1/2$ and ${\theta _2} \ge 1$. Moreover, the network regret bound is the same as the results established in \cite{Lu2021a} where compressed communication and inequality constraints are not considered, and the results established in \cite{Cao2023a, Tu2022} where inequality constraints and nonconvex local loss functions are not considered. In addition, when Slater's condition holds, the network cumulative constraint violation bound is further reduced as in \eqref{theorem2-eq4}, which is the same as the results established in \eqref{theorem1-eq4} with ${\theta _1} = 1/2$.
\end{remark}

\section{Simulation Example}
To evaluate the performance of Algorithm~1, we consider a distributed
online localization problem with long-term constraints over a network of $100$ sensors as follows: 
\begin{subequations}
\begin{flalign}
&\mathop {\min }\limits_x \;\;\;\sum\limits_{t = 1}^T {\sum\limits_{i = 1}^n {\frac{1}{4}} } {\big| {{{\| {{S_i} - x} \|}^2} - {D_{i,t}}} \big|^2}, \label{example1-1}\\ 
&\;{\rm{s}}{\rm{.t}}{\rm{.   }}\;\;\;\;x \in \mathbb{X},{\rm{ }}{B_{i,t}}x - {b_{i,t}} \le {\mathbf{0}_{{m_i}}},\forall i \in [ n ],\forall t \in [ T ], \label{example1-2}
\end{flalign} \label{example1}%
\end{subequations}
where $S_i \in {\mathbb{R}^p}$ denotes the position of sensor~$i$, ${D_{i,t}} = {\| {{S_i} - {X_{0,t}}} \|^2} + {\tau _{i,t}}$ denotes the distance between the positions of the target and sensor $i$, ${X_{0,t}} \in {\mathbb{R}^p}$ and ${\tau _{i,t}} \in \mathbb{R}$ denote the position of the target and the measurement noise at iteration $t$, respectively, and ${B_{i,t}} \in {\mathbb{R}^{{m_i} \times p}}$ and ${b_{i,t}} \in {\mathbb{R}^{{m_i}}}$ denote the coefficient matrix and coefficient vector of the local linear constraints, respectively. 
The communication topology is modeled by a time-varying undirected graph.
Specifically, at each iteration~$t$, the graph is first randomly generated where the probability of any two sensors being connected is $\rho $. Then, to make sure that Assumption~4 is satisfied, we add edges~$( {i,i + 1} )$ for $i \in [ 24 ]$ when $t \in \{ {4c + 1} \}$, edges~$( {i,i + 1} )$ for $i \in [ 25, 49 ]$ when $t \in \{ {4c + 2} \}$, edges~$( {i,i + 1} )$ for $i \in [ 50, 74 ]$ when $t \in \{ {4c + 3} \}$, edges~$( {i,i + 1} )$ for $i \in [ 75, 99 ]$ when $t \in \{ {4c + 4} \}$ for $c = \{0, 1, \cdot  \cdot  \cdot \}$. Moreover, let
${[ {{W_t}} ]_{ij}} = \frac{1}{n}$ if $( {j,i} ) \in {\mathcal{E}_t}$ and ${[ {{W_t}} ]_{ii}} = 1 - \sum\nolimits_{j = 1}^n {{{[ {{W_t}} ]}_{ij}}} $.

In this paper, we show in Theorem~1 that, both without and with Slater's condition, Algorithm~1 establishes the same network regret and
cumulative constraint violation bounds as the state-of-the-art results on distributed online convex optimization with long-term constraints, established by the distributed online algorithms with perfect communication in~\cite{Yi2024}. To verify the theoretical results, we compare Algorithm~1 with the algorithm in \cite{Yi2024}. We set $\rho  = 0.1$, $\mathbb{X} = {[ { - 5,5} ]^p}$, $p = 2$, ${m_i} = 2$, and randomly choose each component of ${{S_i}}$ from the uniform distribution in the interval $[ { - 10,10} ]$. We assume that the position of the target evolves by
\begin{flalign}
\nonumber
{X_{0,t + 1}} = {X_{0,t}} + \left[ {\begin{array}{*{20}{c}}
{\frac{{{{( { - 1} )}^{{Q_t}}}\sin ( {t/50} )}}{{10t}}}\\
{\frac{{ - {Q_t}\cos ( {t/70} )}}{{40t}}}
\end{array}} \right],
\end{flalign}
where ${{Q_t}}$ is randomly generated from Bernoulli distribution with a success probability of $0.5$, and ${X_{0,0}} = {[ {0.8,0.95} ]^T}$. Moreover, ${\tau _{i,t}}$ is randomly generated from the uniform distribution from in the interval $[ { 0,0.001} ]$. Furthermore, each component of ${B_{i,t}}$ is randomly generated from the uniform distribution in the interval $[ {0,2} ]$, and each component of ${b_{i,t}}$ is randomly generated from the uniform distribution in the interval $[ {b,b+1} ]$ with $b > 0$. Note that $b > 0$ guarantees Slater's condition holds. Here we choose $b=0.01$.
In addition, we select the following compressor for Algorithm~1:
\begin{flalign}
\nonumber
\mathcal{C}( x ) = \Delta \Big\lfloor {\frac{x}{\Delta } + \frac{{{\mathbf{1}_p}}}{2}} \Big\rfloor,
\end{flalign}
where $\Delta $ is a positive integer. This compressor satisfies Assumption~5 with $d = \infty $ and $C = {\Delta ^2}/4$, which is also used in \cite{Yi2022, Ge2023, Li2017, Khirirat2021}. Transmitting $\mathcal{C}( x )$ requires $pq$ bits if each integer is encoded using $q$ bits. Here we set $\Delta =1$ and $q=8$.

\begin{figure}[!ht]
 \centering
  \includegraphics[width=10cm]{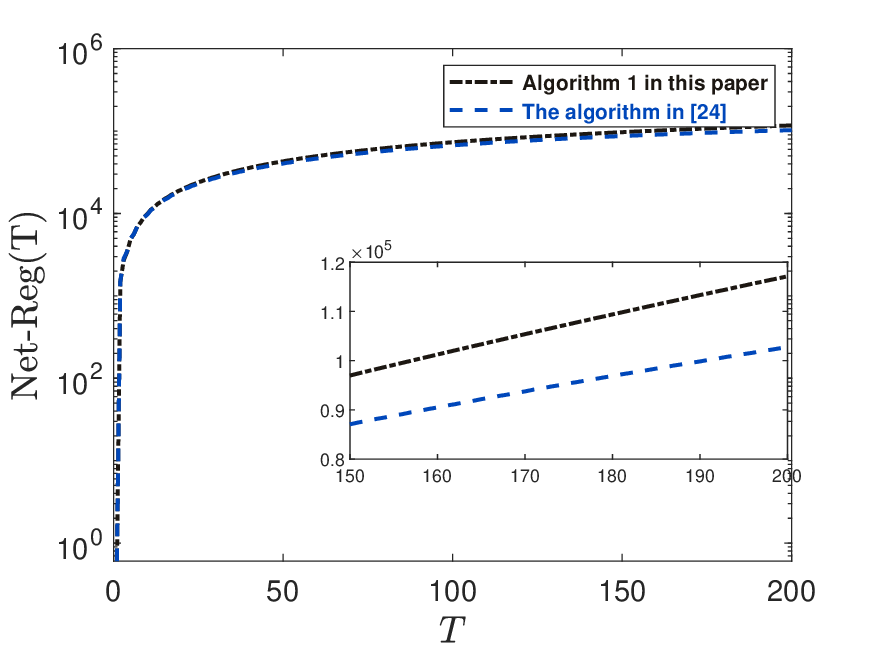}
  \caption{Evolutions of network regret.}
\end{figure}

\begin{figure}[!ht]
 \centering
  \includegraphics[width=10cm]{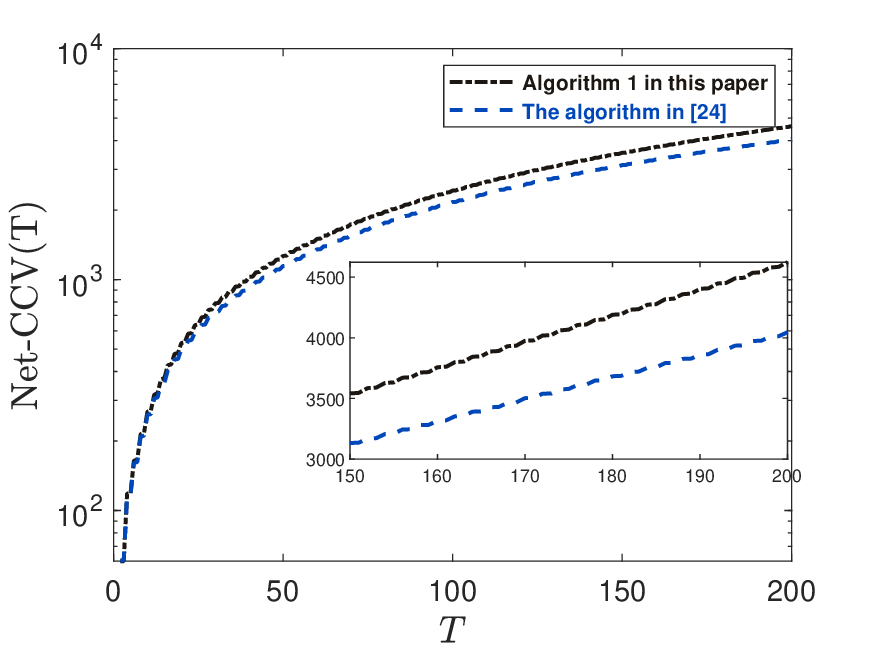}
  \caption{Evolutions of network cumulative constraint violation.}
\end{figure}
Figs.~1 and 2 illustrate the evolutions of network regret and cumulative constraint violation, respectively. As shown in Fig.1, our Algorithm~1 exhibits almost the same network regret as that of the algorithm in \cite{Yi2024}. Similarly, Fig. 2 demonstrates that our Algorithm~1 also has almost the same network cumulative constraint violation as that of the algorithm in \cite{Yi2024}. However, due to compressed communication, our Algorithm~1 requires significantly fewer bits than those required by the algorithm in \cite{Yi2024}. These simulation results are consistent with the results in Theorem~1.

\section{Conclusions}
This paper studied the distributed online nonconvex optimization problem with time-varying constraints. To better utilize communication resources, we proposed a distributed online primal--dual algorithm with compressed communication. More importantly, the algorithm was able to handle time-varying communication topologies. We showed that the algorithm established sublinear network regret and cumulative constraint violation bounds. Moreover, the network cumulative constraint violation bounds were further reduced when Slater's condition held. 
In the future, we plan to investigate distributed bandit nonconvex optimization with time-varying constraints since gradient information is unavailable in many real-world applications.

\appendix

\hspace{-3mm}\emph{A. Useful Lemmas}

We begin by presenting several preliminary results that will be utilized in the subsequent proofs.
\begin{lemma}
(Equation (5.4.21) on page 333 in \cite{Horn2012})
For any $x \in {\mathbb{R}^p}$, it holds that ${\| x \|_d} \le \hat p\| x \|$ and $\| x \| \le \tilde p{\| x \|_d}$, where $\hat p = {p^{\frac{1}{d} - \frac{1}{2}}}$ and $\tilde p = 1$ when $d \in [ {1,2} ]$, and $\hat p = 1$ and $\tilde p = {p^{\frac{1}{2} - \frac{1}{d}}}$ when $d > 2$.
\end{lemma}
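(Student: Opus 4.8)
The plan is to recognize that $\|x\|_d$ denotes the $\ell_d$-norm $\big(\sum_{i=1}^p |x_i|^d\big)^{1/d}$ while $\|x\|$ is the Euclidean norm $\|x\|_2$, so that the claim is nothing more than the equivalence of the $\ell_d$- and $\ell_2$-norms on $\mathbb{R}^p$ with the stated sharp constants. I would isolate, for arbitrary exponents $0 < r \le s$, the two elementary comparison inequalities $\|x\|_s \le \|x\|_r$ and $\|x\|_r \le p^{1/r - 1/s}\|x\|_s$, prove each once, and then specialize the pair $(r,s)$ according to whether $d \le 2$ or $d > 2$.

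First I would establish the monotonicity $\|x\|_s \le \|x\|_r$. By positive homogeneity it suffices to treat $\|x\|_r = 1$; then each $|x_i| \le 1$, and since $s \ge r$ we have $|x_i|^s \le |x_i|^r$, whence $\sum_i |x_i|^s \le \sum_i |x_i|^r = 1$ and therefore $\|x\|_s \le 1 = \|x\|_r$. For the reverse bound I would invoke H\"older's inequality: writing $\sum_i |x_i|^r = \sum_i |x_i|^r \cdot 1$ and applying H\"older with the conjugate exponents $s/r$ and $s/(s-r)$ gives $\sum_i |x_i|^r \le \big(\sum_i |x_i|^s\big)^{r/s}\, p^{(s-r)/s}$, and taking $r$-th roots yields $\|x\|_r \le p^{1/r - 1/s}\|x\|_s$, using $(s-r)/(rs) = 1/r - 1/s$.

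Finally I would substitute the appropriate exponents. When $d \in [1,2]$, choosing $r = d$ and $s = 2$ produces $\|x\| \le \|x\|_d$ and $\|x\|_d \le p^{1/d - 1/2}\|x\|$, which is exactly the assertion with $\tilde p = 1$ and $\hat p = p^{1/d - 1/2}$. When $d > 2$, choosing $r = 2$ and $s = d$ produces $\|x\|_d \le \|x\|$ and $\|x\| \le p^{1/2 - 1/d}\|x\|_d$, i.e. the assertion with $\hat p = 1$ and $\tilde p = p^{1/2 - 1/d}$. Since this is a classical result (indeed the lemma cites it from \cite{Horn2012}), there is no real obstacle; the only points demanding care are matching the conjugate exponents in H\"older's inequality and tracking the exponent arithmetic so that the constants emerge precisely as stated, together with the convention $1/d = 0$ should one wish to recover the $d = \infty$ case used in the simulation.
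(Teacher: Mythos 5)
Your proof is correct: the monotonicity $\|x\|_s \le \|x\|_r$ for $r \le s$ and the H\"older-based reverse bound $\|x\|_r \le p^{1/r-1/s}\|x\|_s$, specialized to $(r,s)=(d,2)$ or $(2,d)$, yield exactly the stated constants $\hat p$ and $\tilde p$. The paper itself offers no proof of this lemma — it is imported verbatim from \cite{Horn2012} — so there is no alternative argument to compare against; what you have written is the standard derivation of that cited fact, and your closing remark about the $1/d=0$ convention correctly covers the $d=\infty$ case used with the quantization compressor in the simulation.
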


\begin{lemma}
(\cite{Nedic2009, Nedic2014})
Let ${W_t}$ denote the mixing matrix associated with a time-varying graph that satisfies Assumption~4. Then,
\begin{flalign}
\Big| {{{[ {\Psi _s^t} ]}_{ij}} - \frac{1}{n}} \Big| \le \tau {\lambda ^{t - s}}, \forall i,j \in [ n ], \forall t \ge s \ge 1, \label{Lemma1-eq1}
\end{flalign}
where $\Psi _s^t = {W_t}{W_{t - 1}} \cdots {W_s}$, $\tau  = {( {1 - \omega /4{n^2}} )^{ - 2}} > 1$, and $\lambda  = {( {1 - \omega /4{n^2}} )^{1/B}} \in ( {0,1} )$.
\end{lemma}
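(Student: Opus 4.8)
This is the classical geometric-consensus estimate for products of doubly stochastic mixing matrices under periodic strong connectivity; the authors import it from \cite{Nedic2009, Nedic2014}, and the route I would take to re-derive it is the following. Since each $W_k$ is doubly stochastic (Assumption~4(ii)), the product $\Psi_s^t = W_t W_{t-1}\cdots W_s$ is again doubly stochastic, so in particular each of its columns sums to $1$. Hence, for every $i,j$,
\[
{[ {\Psi_s^t} ]_{ij}} - \frac{1}{n} = \frac{1}{n}\sum_{i' = 1}^n \big( {[ {\Psi_s^t} ]_{ij}} - {[ {\Psi_s^t} ]_{i'j}} \big),
\]
so that $\big| {[ {\Psi_s^t} ]_{ij}} - 1/n \big| \le \max_{i,i'}\big| {[ {\Psi_s^t} ]_{ij}} - {[ {\Psi_s^t} ]_{i'j}} \big|$. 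It therefore suffices to show that the row-dispersion of $\Psi_s^t$ --- equivalently its Dobrushin ergodicity coefficient $\delta( {\Psi_s^t} )$ --- decays geometrically in $t-s$.

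The workhorse is the submultiplicativity $\delta( {PQ} ) \le \delta( P )\delta( Q )$ together with a uniform contraction over each block of $B$ consecutive steps. Partitioning $[ {s,t} ]$ into windows of length $B$, the crux is to prove that every window product $W_{r+B-1}\cdots W_r$ is scrambling, with $\delta( {W_{r+B-1}\cdots W_r} ) \le 1 - \omega/( {4n^2} )$. Here Assumptions~4(i) and 4(iii) enter together: over each window the union graph is strongly connected and every activated or diagonal weight is at least $\omega$, so any two rows of the window product share a column in which both entries are simultaneously bounded below, which forces the scrambling constant; the polynomial factor $n^2$ reflects the worst-case redistribution of mass across all $n$ agents along paths of length at most $B$.

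Chaining the windows then gives a bound of the form $\delta( {\Psi_s^t} ) \le ( {1 - \omega/(4n^2)} )^{\lfloor ( {t-s+1} )/B\rfloor}$, and absorbing the incomplete window together with the dispersion-to-entry conversion constant into the prefactor $\tau = ( {1 - \omega/(4n^2)} )^{-2}$ and the per-step rate $\lambda = ( {1 - \omega/(4n^2)} )^{1/B}$ yields $\big| {[ {\Psi_s^t} ]_{ij}} - 1/n \big| \le \tau\lambda^{t-s}$, exactly as stated. An equivalent route replaces the ergodicity coefficient by the quadratic Lyapunov function $V( y ) = \| {y - \frac{1}{n}( {\mathbf{1}_n^T y} )\mathbf{1}_n} \|^2$, uses that every doubly stochastic $W_k$ is nonexpansive for $V$, establishes the same per-window decay of $V$, and specializes to the basis vectors $y = e_j$ to recover the column-wise estimate; the reduction to the entrywise bound is then identical.

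The main obstacle is precisely the per-window scrambling estimate with the \emph{explicit} constant $1 - \omega/( {4n^2} )$: a naive path-counting argument only yields a window contraction of order $1 - \omega^{B}$ (and over windows of length $( {n-1} )B$ rather than $B$), whereas extracting the sharper $n$-polynomial constant requires the careful ergodicity/variance bookkeeping carried out in \cite{Nedic2009, Nedic2014}. Everything else --- double stochasticity, submultiplicativity of $\delta$, and the geometric accumulation --- is routine.
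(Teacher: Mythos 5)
The paper does not prove this lemma; it is imported verbatim from \cite{Nedic2009, Nedic2014}, so there is no in-paper argument to compare against. Your outline is the standard one --- double stochasticity reduces the entrywise bound to the row dispersion of $\Psi_s^t$, and the dispersion is contracted geometrically window by window --- and you correctly identify the per-window contraction with the explicit constant $1-\omega/(4n^2)$ as the crux supplied by the references. One step as you state it, however, would fail: strong connectivity of the \emph{union} graph over a $B$-step window does not imply that the product $W_{r+B-1}\cdots W_r$ is scrambling. Positivity of $[W_{r+B-1}\cdots W_r]_{ij}$ requires a \emph{time-respecting} path from $j$ to $i$ within the window, and the edges of a path in the union graph may occur in the wrong temporal order; the classical path/scrambling argument therefore needs windows of length $(n-1)B$ and yields a contraction of order $1-n\omega^{(n-1)B}$, exponential in $n$ and $B$, not the polynomial constant claimed. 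The polynomial-in-$n$ constant in the lemma is obtained by the quadratic (variance) route that you mention only as an ``equivalent'' alternative --- tracking the decrease of $\|y-\tfrac{1}{n}(\mathbf{1}_n^Ty)\mathbf{1}_n\|^2$ under each doubly stochastic step and lower-bounding the accumulated decrease over a $B$-window via connectivity of the union graph --- so the two routes are not interchangeable. Since you explicitly defer the sharp per-window estimate to \cite{Nedic2009, Nedic2014}, exactly as the paper does, the proposal is acceptable as a citation-level justification, but the Dobrushin-scrambling version of the window estimate should not be presented as if it delivered the stated constant.
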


\begin{lemma}
(\cite{Zhang2024b})
Let $\mathbb{K}$ denote a nonempty closed convex subset of ${\mathbb{R}^p}$ and let $b$ and $c$ denote two vectors in ${\mathbb{R}^p}$. If ${x} = {\mathcal{P}_\mathbb{K}}( {b - c} )$, then for all $y \in \mathbb{K}$,
\begin{flalign}
2\langle {x - y,c} \rangle  \le {\| {y - b} \|^2} - {\| {y - x} \|^2} - {\| {x - b} \|^2}. \label{Lemma2-eq1}
\end{flalign}
In addition, let $\Phi ( y ) = {\| {b - y} \|^2} + 2\langle {c,y} \rangle $, then we know $\Phi$ is a strongly convex function with convexity parameter $\sigma = 2$ and $x = \mathop {\arg \min }\limits_{y \in \mathbb{K}} \Phi ( y )$. Then, 
\begin{flalign}
\| {x - b} \| \le {\| c \|} \label{Lemma2-eq2}
\end{flalign}
holds.
\end{lemma}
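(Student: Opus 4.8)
The plan is to recognize \eqref{Lemma2-eq1} as nothing more than the variational (obtuse-angle) characterization of the Euclidean projection, and to treat the $\Phi$ assertion as an elementary ``completing the square'' observation.

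First I would rewrite the right-hand side of \eqref{Lemma2-eq1} in closed form via the three-term expansion
\[
\|y-b\|^2 - \|y-x\|^2 - \|x-b\|^2 = 2\langle y-x,\, x-b\rangle ,
\]
which follows by writing $y-b=(y-x)+(x-b)$ and expanding the squared norm. With this, \eqref{Lemma2-eq1} is seen to be equivalent to $\langle x-y,\,c\rangle \le \langle y-x,\,x-b\rangle$, i.e.\ to
\[
\langle (b-c)-x,\, y-x\rangle \le 0, \qquad \forall y\in\mathbb{K}.
\]
This is exactly the first-order optimality condition characterizing $x=\mathcal{P}_\mathbb{K}(b-c)$: since $x$ minimizes the $2$-strongly convex map $w\mapsto\|(b-c)-w\|^2$ over the nonempty closed convex set $\mathbb{K}$, the residual $(b-c)-x$ forms a nonacute angle with every feasible direction $y-x$. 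Hence \eqref{Lemma2-eq1} holds for all $y\in\mathbb{K}$.

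Next I would dispatch the claims about $\Phi$. Expanding the square gives
\[
\|(b-c)-y\|^2 = \Phi(y) + \|c\|^2 - 2\langle b,c\rangle ,
\]
so $\Phi$ and $w\mapsto\|(b-c)-w\|^2$ differ only by an additive constant; this shows both that the Hessian of $\Phi$ equals $2I$ (hence $\Phi$ is strongly convex with $\sigma=2$) and that the two functions share the same minimizer over $\mathbb{K}$, namely $x=\mathcal{P}_\mathbb{K}(b-c)$. To obtain \eqref{Lemma2-eq2} I would then instantiate \eqref{Lemma2-eq1} at the feasible point $y=b$, giving $2\langle x-b,\,c\rangle \le -2\|x-b\|^2$; rearranging and applying the Cauchy--Schwarz inequality yields $\|x-b\|^2 \le \langle b-x,\,c\rangle \le \|x-b\|\,\|c\|$, and dividing by $\|x-b\|$ (the case $x=b$ being trivial) gives $\|x-b\|\le\|c\|$.

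The step that needs care, and the only real obstacle, is the tacit feasibility $b\in\mathbb{K}$ that legitimizes substituting $y=b$ in \eqref{Lemma2-eq1}; without it \eqref{Lemma2-eq2} can genuinely fail (take $\mathbb{K}$ a single point far from $b$ and $c=\mathbf{0}_p$). In every place the lemma is invoked this condition does hold --- for instance, in \eqref{Algorithm1-eq6} the base point $b=x_{i,t}$ is, by \eqref{Algorithm1-eq3} and the double stochasticity of $W_t$, a convex combination of the projected iterates $\hat z_{j,t}\in\mathbb{X}$ and therefore lies in $\mathbb{X}$. Alternatively, once $b\in\mathbb{K}$ one may bypass \eqref{Lemma2-eq1} entirely and read \eqref{Lemma2-eq2} off the nonexpansiveness of the projection, since $\|x-b\| = \|\mathcal{P}_\mathbb{K}(b-c)-\mathcal{P}_\mathbb{K}(b)\| \le \|c\|$.
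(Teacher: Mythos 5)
Your proof is correct, and there is nothing in the paper to compare it against: the lemma is imported from \cite{Zhang2024b} without proof. Your argument is the standard one --- the three-term identity $\|y-b\|^2-\|y-x\|^2-\|x-b\|^2=2\langle y-x,x-b\rangle$ reduces \eqref{Lemma2-eq1} to the variational characterization $\langle (b-c)-x,\,y-x\rangle\le 0$ of the projection, and completing the square shows $\Phi$ and $\|(b-c)-\cdot\|^2$ differ by a constant. Your most valuable observation is that \eqref{Lemma2-eq2} is false as literally stated unless $b\in\mathbb{K}$ (your singleton counterexample with $c={\mathbf{0}_p}$ is decisive); the unconditional version of the claim is $\|x-\mathcal{P}_\mathbb{K}(b)\|\le\|c\|$, which collapses to \eqref{Lemma2-eq2} exactly when $b\in\mathbb{K}$, and then follows either from $y=b$ in \eqref{Lemma2-eq1} plus Cauchy--Schwarz or, more simply, from nonexpansiveness of $\mathcal{P}_\mathbb{K}$. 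You also correctly verify that this hidden hypothesis holds wherever the paper invokes the bound: in \eqref{Lemma5-proof-v-eq1} the base point is $b=x_{i,t}$, which by \eqref{Algorithm1-eq3} is a convex combination of the points $\hat z_{j,t}$, each produced by a projection onto $\mathbb{X}$ in \eqref{Algorithm1-eq1}, so $b\in\mathbb{X}$ and the paper's use of the lemma is sound.
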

\begin{lemma}
If Assumption~4 holds. For all $i \in [ n ]$ and $t \in {\mathbb{N}_ + }$, ${\hat{z}_{i,t}}$ generated by Algorithm~1 satisfy
\begin{flalign}
\| {{\hat{z}_{i,t}} - {{\bar z}_t}} \| \le \tau {\lambda ^{t - 2}}\sum\limits_{j = 1}^n {\| {{\hat{z}_{j,1}}} \|}  + \tau \sum\limits_{s = 1}^{t - 2} {{\lambda ^{t - s - 2}}} \sum\limits_{j = 1}^n {\| {\varepsilon _{j,s}^z} \|}  + \| {\varepsilon _{i,t - 1}^z} \| + \frac{1}{n}\sum\limits_{j = 1}^n {\| {\varepsilon _{j,t - 1}^z} \|}, \label{Lemma3-eq1}
\end{flalign}
where ${{\bar z}_t} = \frac{1}{n}\sum\nolimits_{i = 1}^n {{\hat{z}_{i,t}}} $, $\varepsilon _{i,t - 1}^z = {\hat{z}_{i,t}} - {x_{i,t - 1}}$.
\end{lemma}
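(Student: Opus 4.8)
The plan is to recognize the estimate update \eqref{Algorithm1-eq1}--\eqref{Algorithm1-eq3} as a perturbed doubly-stochastic averaging iteration and then unroll it against the transition matrices $\Psi_s^t$ from Lemma~2. First I would combine the definition $\varepsilon_{i,t-1}^z = \hat{z}_{i,t} - x_{i,t-1}$ with \eqref{Algorithm1-eq3} written at iteration $t-1$, namely $x_{i,t-1} = \sum_{j=1}^n [W_{t-1}]_{ij}\hat{z}_{j,t-1}$, to obtain
\begin{flalign}
\nonumber
\hat{z}_{i,t} = \sum_{j=1}^n [W_{t-1}]_{ij}\hat{z}_{j,t-1} + \varepsilon_{i,t-1}^z.
\end{flalign}
This exhibits $\{\hat{z}_{i,t}\}$ as the output of an \emph{exact} averaging process driven only by the compression-induced perturbations $\varepsilon_{i,t-1}^z$, so the problem reduces to a standard consensus-error estimate in which the compression error plays the role of the driving noise.

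Next I would iterate this recursion down to $t=1$. Writing $\Psi_s^t = W_t W_{t-1}\cdots W_s$ as in Lemma~2, unrolling yields
\begin{flalign}
\nonumber
\hat{z}_{i,t} = \sum_{j=1}^n [\Psi_1^{t-1}]_{ij}\hat{z}_{j,1} + \sum_{s=1}^{t-1}\sum_{j=1}^n [\Psi_{s+1}^{t-1}]_{ij}\varepsilon_{j,s}^z,
\end{flalign}
where the $s=t-1$ summand uses the convention that the empty product is the identity, i.e. $[\Psi_t^{t-1}]_{ij} = \delta_{ij}$. Because each $W_t$ is doubly stochastic by Assumption~4(ii), averaging over $i$ collapses every transition matrix to the uniform weights, giving $\bar{z}_t = \tfrac1n\sum_{j=1}^n \hat{z}_{j,1} + \sum_{s=1}^{t-1}\tfrac1n\sum_{j=1}^n \varepsilon_{j,s}^z$.

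The final step is to subtract $\bar{z}_t$ from $\hat{z}_{i,t}$ and bound the resulting transition-matrix deviations. Subtracting produces
\begin{flalign}
\nonumber
\hat{z}_{i,t} - \bar{z}_t = \sum_{j=1}^n\Big([\Psi_1^{t-1}]_{ij} - \tfrac1n\Big)\hat{z}_{j,1} + \sum_{s=1}^{t-1}\sum_{j=1}^n\Big([\Psi_{s+1}^{t-1}]_{ij} - \tfrac1n\Big)\varepsilon_{j,s}^z,
\end{flalign}
and I would apply Lemma~2 entrywise, namely $|[\Psi_1^{t-1}]_{ij} - 1/n| \le \tau\lambda^{t-2}$ for the initial term and $|[\Psi_{s+1}^{t-1}]_{ij} - 1/n| \le \tau\lambda^{t-s-2}$ for $1 \le s \le t-2$, together with the triangle inequality, to recover the first two terms of \eqref{Lemma3-eq1}. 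The delicate point---and the one place where care is genuinely needed---is the boundary term $s=t-1$, for which no geometric decay is available since $\Psi_t^{t-1}$ is the identity rather than a true product of mixing matrices; here I would evaluate $\sum_{j=1}^n(\delta_{ij}-1/n)\varepsilon_{j,t-1}^z$ directly, which contributes exactly the two undamped terms $\|\varepsilon_{i,t-1}^z\| + \tfrac1n\sum_{j=1}^n\|\varepsilon_{j,t-1}^z\|$. Collecting the four pieces gives \eqref{Lemma3-eq1}. Thus the main obstacle is not any single estimate but the bookkeeping of the index shift in $\Psi_{s+1}^{t-1}$ and the correct isolation of the most recent, non-contracting $s=t-1$ contribution.
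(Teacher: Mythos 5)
Your proposal is correct and follows essentially the same route as the paper: the paper's own proof derives exactly the perturbed-consensus recursion $\hat{z}_{i,t} = \sum_{j=1}^n [W_{t-1}]_{ij}\hat{z}_{j,t-1} + \varepsilon_{i,t-1}^z$ and then invokes the proof of Lemma~4 in \cite{Yi2023} for the remaining unrolling, which is precisely the computation you carry out explicitly (including the correct isolation of the non-contracting $s=t-1$ term). Your write-up simply fills in the details the paper delegates to the citation.
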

\begin{proof}
From \eqref{Algorithm1-eq3}, we recall that
\begin{flalign}
{x_{i,t}} = \sum\limits_{j = 1}^n {{{[ {{W_t}} ]}_{ij}}{\hat{z}_{j,t}}} \label{Lemma3-proof-eq1}
\end{flalign}
holds.

From \eqref{Lemma3-proof-eq1} and $\varepsilon _{i,t - 1}^z = {\hat{z}_{i,t}} - {x_{i,t - 1}}$, we have
\begin{flalign}
{\hat{z}_{i,t}} = \sum\limits_{j = 1}^n {{{[ {{W_{t - 1}}} ]}_{ij}}{\hat{z}_{j,t - 1}}}  + \varepsilon _{i,t - 1}^z. \label{Lemma3-proof-eq2}
\end{flalign}
Then, by following the proof of Lemma~4 in \cite{Yi2023}, we conclude that \eqref{Lemma3-eq1} holds.
\end{proof}
\begin{lemma}
Suppose Assumptions~1--2 and 4--5 hold. For all $i \in [ n ]$, let $\{ {{x_{i,t}}} \}$ be the sequences generated by Algorithm~1 and $y$ be an arbitrary point in $\mathbb{X}$, then
\begin{flalign}
\nonumber
{\mathbf{E}_\mathcal{C}}[\frac{1}{n}\sum\limits_{i = 1}^n {v_{i,t + 1}^T{g_{i,t}}( {{x_{i,t}}} )}  + \frac{1}{n}\sum\limits_{i = 1}^n {\langle {\nabla {f_{i,t}}( {{x_{i,t}}} ),{x_{i,t}} - y} \rangle }]  &\le \frac{1}{n}\sum\limits_{i = 1}^n {\mathbf{E}_\mathcal{C}}[{v_{i,t + 1}^T{g_{i,t}}( y )}]  + \frac{1}{n}\sum\limits_{i = 1}^n {\mathbf{E}_\mathcal{C}}[{{\Delta _{i,t}}( y )}] \\
&\;\; + \frac{{\mathbf{E}_\mathcal{C}}[{{{\tilde \Delta }_t}}]}{n} + \frac{{2{\tilde p}\sqrt{C}R( \mathbb{X} )}{s_{t + 1}}}{{{\alpha _t}}}, \label{Lemma4-eq1}
\end{flalign}
where 
\begin{flalign}
\nonumber
&{\Delta _{i,t}}( y ) = \frac{1}{{2{\alpha _t}}}( {{{\| {y - {x_{i,t}}} \|}^2} - {{\| {y - {x_{i,t + 1}}} \|}^2}} ), \\
\nonumber
&{{\tilde \Delta }_t} = \sum\limits_{i = 1}^n {( {{G_1} + {G_2}\| {{v_{i,t + 1}}} \|} )} \| {{x_{i,t}} - {z_{i,t + 1}}} \| - \sum\limits_{i = 1}^n {\frac{{{{\| {{x_{i,t}} - {z_{i,t + 1}}} \|}^2}}}{{2{\alpha _t}}}}.
\end{flalign}
\end{lemma}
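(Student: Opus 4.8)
The plan is to start from the primal projection step \eqref{Algorithm1-eq6}, namely $z_{i,t+1} = \mathcal{P}_\mathbb{X}(x_{i,t} - \alpha_t\omega_{i,t+1})$, and apply Lemma~3 with $b = x_{i,t}$, $c = \alpha_t\omega_{i,t+1}$, and $x = z_{i,t+1}$. For any $y \in \mathbb{X}$ this gives
\[ 2\alpha_t\langle z_{i,t+1}-y,\omega_{i,t+1}\rangle \le \|y-x_{i,t}\|^2 - \|y-z_{i,t+1}\|^2 - \|z_{i,t+1}-x_{i,t}\|^2. \]
I would then split $\langle z_{i,t+1}-y,\omega_{i,t+1}\rangle = \langle x_{i,t}-y,\omega_{i,t+1}\rangle + \langle z_{i,t+1}-x_{i,t},\omega_{i,t+1}\rangle$ and control the latter by Cauchy--Schwarz together with the bound $\|\omega_{i,t+1}\| \le G_1 + G_2\|v_{i,t+1}\|$, which follows from \eqref{Algorithm1-eq5} and Assumption~2.

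Next I would exploit the structure $\omega_{i,t+1} = \nabla f_{i,t}(x_{i,t}) + (\nabla g_{i,t}(x_{i,t}))^T v_{i,t+1}$. Expanding $\langle x_{i,t}-y,\omega_{i,t+1}\rangle$ and using convexity of $g_{i,t}$ yields componentwise $(\nabla g_{i,t}(x_{i,t}))(x_{i,t}-y) \ge g_{i,t}(x_{i,t}) - g_{i,t}(y)$; since $v_{i,t+1} = \gamma_t[g_{i,t}(x_{i,t})]_+ \ge \mathbf{0}_{m_i}$ by \eqref{Algorithm1-eq4}, this produces $\langle x_{i,t}-y,\omega_{i,t+1}\rangle \ge \langle \nabla f_{i,t}(x_{i,t}),x_{i,t}-y\rangle + v_{i,t+1}^T g_{i,t}(x_{i,t}) - v_{i,t+1}^T g_{i,t}(y)$. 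Combining the three estimates, summing over $i \in [n]$ and dividing by $n$ isolates the loss and constraint terms on the left, assembles exactly $\tilde\Delta_t/n$ out of the gathered $(G_1+G_2\|v_{i,t+1}\|)\|x_{i,t}-z_{i,t+1}\|$ and $-\|z_{i,t+1}-x_{i,t}\|^2/(2\alpha_t)$ contributions, and leaves the term $\frac{1}{2\alpha_t n}\sum_i(\|y-x_{i,t}\|^2 - \|y-z_{i,t+1}\|^2)$.

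The main obstacle is that this last term is written in $z_{i,t+1}$, whereas $\Delta_{i,t}(y)$ is written in $x_{i,t+1}$; bridging the two is where the consensus and compression structure must enter. First I would use convexity of $\|\cdot\|^2$ and double stochasticity of $W_{t+1}$ (Assumption~4) to obtain $\frac{1}{n}\sum_i\|y-x_{i,t+1}\|^2 \le \frac{1}{n}\sum_j\|y-\hat z_{j,t+1}\|^2$, reducing matters to bounding $\frac{1}{n}\sum_j \mathbf{E}_\mathcal{C}[\|y-\hat z_{j,t+1}\|^2 - \|y-z_{j,t+1}\|^2]$. For the compression error I would observe that $z_{j,t+1}\in\mathbb{X}$ equals its own projection, so applying non-expansiveness of $\mathcal{P}_\mathbb{X}$ to \eqref{Algorithm1-eq1} gives $\|\hat z_{j,t+1}-z_{j,t+1}\| \le s_{t+1}\|\mathcal{C}(u)-u\|$ with $u = (z_{j,t+1}-\hat z_{j,t})/s_{t+1}$; Lemma~1, Assumption~5, and Jensen's inequality then yield $\mathbf{E}_\mathcal{C}[\|\hat z_{j,t+1}-z_{j,t+1}\|] \le \tilde p\sqrt C s_{t+1}$. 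Writing $\|y-\hat z_{j,t+1}\|^2 - \|y-z_{j,t+1}\|^2 = \langle z_{j,t+1}-\hat z_{j,t+1},\, 2y-\hat z_{j,t+1}-z_{j,t+1}\rangle$ and applying Cauchy--Schwarz with $\|x\|\le R(\mathbb{X})$ from Assumption~1 gives a factor $4R(\mathbb{X})$, so after multiplying by $\frac{1}{2\alpha_t}$ the extra term is at most $2\tilde p\sqrt C R(\mathbb{X})s_{t+1}/\alpha_t$.

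Substituting this estimate into the telescoping term converts $-\|y-z_{i,t+1}\|^2$ into $-\|y-x_{i,t+1}\|^2$ at the cost of the stated residual, so that the bracket becomes $\frac{1}{n}\sum_i\mathbf{E}_\mathcal{C}[\Delta_{i,t}(y)]$; taking $\mathbf{E}_\mathcal{C}$ throughout then yields \eqref{Lemma4-eq1}. I expect the delicate point to be the compression-error step: handling the recursively defined reference $\hat z_{j,t}$ and verifying that, in the absence of compression, the update \eqref{Algorithm1-eq1} returns exactly $z_{j,t+1}$, which is precisely what makes the non-expansiveness argument produce the clean factor $s_{t+1}$.
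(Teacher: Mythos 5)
Your proposal is correct and follows essentially the same route as the paper's proof: the projection inequality of Lemma~3 applied to \eqref{Algorithm1-eq6}, convexity of $g_{i,t}$ with $v_{i,t+1}\ge \mathbf{0}_{m_i}$, Cauchy--Schwarz to collect the $(G_1+G_2\|v_{i,t+1}\|)\|x_{i,t}-z_{i,t+1}\|$ terms into $\tilde\Delta_t$, and the same difference-of-squares plus non-expansiveness/Lemma~1/Jensen/Assumption~5 chain to convert $-\|y-z_{i,t+1}\|^2$ into $-\|y-x_{i,t+1}\|^2$ at a cost of $2\tilde p\sqrt{C}R(\mathbb{X})s_{t+1}/\alpha_t$. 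The only cosmetic difference is that you decompose the inner product around $\omega_{i,t+1}$ before splitting off $\nabla f_{i,t}$, whereas the paper splits $\langle\nabla f_{i,t}(x_{i,t}),x_{i,t}-y\rangle$ first; the resulting terms are identical.
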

\begin{proof}
Since the local constraint function ${g_{i,t}}$ is convex, we have
\begin{flalign}
{g_{i,t}}( y ) \ge {g_{i,t}}( x ) + \nabla {g_{i,t}}( x )( {y - x} ), \forall x,y \in \mathbb{X}. \label{Lemma4-proof-eq1}
\end{flalign}
We have
\begin{flalign}
\nonumber
{\mathbf{E}_\mathcal{C}}[\langle {\nabla {f_{i,t}}( {{x_{i,t}}} ),{x_{i,t}} - y} \rangle]
& = {\mathbf{E}_\mathcal{C}}[\langle {\nabla {f_{i,t}}( {{x_{i,t}}} ),{x_{i,t}} - {z_{i,t + 1}}} \rangle]  + {\mathbf{E}_\mathcal{C}}[\langle {\nabla {f_{i,t}}( {{x_{i,t}}} ),{z_{i,t + 1}} - y} \rangle] \\
& \le {G_1}{\mathbf{E}_\mathcal{C}}[\| {{x_{i,t}} - {z_{i,t + 1}}} \|] + {\mathbf{E}_\mathcal{C}}[\langle {\nabla {f_{i,t}}( {{x_{i,t}}} ),{z_{i,t + 1}} - y} \rangle], \label{Lemma4-proof-eq2}
\end{flalign}
where the inequality holds due to \eqref{ass4-eq1a}.

For the second term on the right-hand side of \eqref{Lemma4-proof-eq2}, from \eqref{Algorithm1-eq5}, we have
\begin{flalign}
\nonumber
{\mathbf{E}_\mathcal{C}}[\langle {\nabla {f_{i,t}}( {{x_{i,t}}} ),{z_{i,t + 1}} - y} \rangle]
& = {\mathbf{E}_\mathcal{C}}\big[\big\langle {\big({\nabla} {{g_{i,t}}({x_{i,t}}) }\big)^{T}{v_{i,t + 1}},y - {z_{i,t + 1}}} \big\rangle\big]  + {\mathbf{E}_\mathcal{C}}[\langle {{\omega _{i,t + 1}},{z_{i,t + 1}} - y} \rangle] \\
\nonumber
& = {\mathbf{E}_\mathcal{C}}\big[\big\langle {\big({\nabla} {{g_{i,t}}({x_{i,t}}) }\big)^{T}{v_{i,t + 1}},y - {x_{i,t}}} \big\rangle\big]  + {\mathbf{E}_\mathcal{C}}\big[\big\langle {\big({\nabla} {{g_{i,t}}({x_{i,t}}) }\big)^{T}{v_{i,t + 1}},{x_{i,t}} - {z_{i,t + 1}}} \big\rangle\big] \\
& \;\;+ {\mathbf{E}_\mathcal{C}}[\langle {{\omega _{i,t + 1}},{z_{i,t + 1}} - y} \rangle]. \label{Lemma4-proof-eq3}
\end{flalign}
Next, we find the upper bound of each term on the right-hand side of \eqref{Lemma4-proof-eq3}.

From ${v_{i,t}} \ge {\mathbf{0}_{{m_i}}}$, $\forall i \in [ n ]$, $\forall t \in {\mathbb{N}_ + }$ and \eqref{Lemma4-proof-eq1}, we have
\begin{flalign}
{\mathbf{E}_\mathcal{C}}\big[\big\langle {\big({\nabla} {{g_{i,t}}({x_{i,t}}) }\big)^{T}{v_{i,t + 1}},y - {x_{i,t}}} \big\rangle\big]  \le {\mathbf{E}_\mathcal{C}}[v_{i,t + 1}^T{g_{i,t}}( y )] - {\mathbf{E}_\mathcal{C}}[v_{i,t + 1}^T{g_{i,t}}( {{x_{i,t}}} )]. \label{Lemma4-proof-eq4}
\end{flalign}
From the Cauchy--Schwarz inequality and \eqref{ass4-eq1b}, we have
\begin{flalign}
{\mathbf{E}_\mathcal{C}}\big[\big\langle {\big({\nabla} {{g_{i,t}}({x_{i,t}}) }\big)^{T}{v_{i,t + 1}},{x_{i,t}} - {z_{i,t + 1}}} \big\rangle\big]  \le {G_2}{\mathbf{E}_\mathcal{C}}[\| {{v_{i,t + 1}}} \|\| {{x_{i,t}} - {z_{i,t + 1}}} \|]. \label{Lemma4-proof-eq5}
\end{flalign}
By applying \eqref{Lemma2-eq1} to the update \eqref{Algorithm1-eq6}, we have
\begin{flalign}
\nonumber
&\;\;\;\;\;{\mathbf{E}_\mathcal{C}}[\langle {{\omega _{i,t + 1}},{z_{i,t + 1}} - y} \rangle] \\
\nonumber
& \le \frac{1}{{2{\alpha _t}}}( {{\mathbf{E}_c}[{{\| {y - {x_{i,t}}} \|}^2}] - {\mathbf{E}_\mathcal{C}}[{{\| {y - {z_{i,t + 1}}} \|}^2}] - {\mathbf{E}_\mathcal{C}}[{{\| {{x_{i,t}} - {z_{i,t + 1}}} \|}^2}]} ) \\
\nonumber
& = \frac{1}{{2{\alpha _t}}}( {\mathbf{E}_\mathcal{C}}[{{\| {y - {x_{i,t}}} \|}^2}] - {\mathbf{E}_\mathcal{C}}[{{\| {y - {x_{i,t + 1}}} \|}^2}] + {\mathbf{E}_\mathcal{C}}[{{\| {y - {x_{i,t + 1}}} \|}^2}] - {\mathbf{E}_\mathcal{C}}[{{\| {y - {z_{i,t + 1}}} \|}^2}] \\
\nonumber
& \;\;- {\mathbf{E}_\mathcal{C}}[{{\| {{x_{i,t}} - {z_{i,t + 1}}} \|}^2}] ) \\
\nonumber
&  = {\mathbf{E}_\mathcal{C}}[{\Delta _{i,t}}( y )] + \frac{1}{{2{\alpha _t}}}\Big( {{\mathbf{E}_\mathcal{C}}\Big[{\Big\| {y - \sum\limits_{j = 1}^n {{{[ {{W_{t + 1}}} ]}_{ij}}{\hat{z}_{j,t + 1}}} } \Big\|^2}\Big] - {\mathbf{E}_\mathcal{C}}[{{\| {y - {z_{i,t + 1}}} \|}^2}] - {\mathbf{E}_\mathcal{C}}[{{\| {{x_{i,t}} - {z_{i,t + 1}}} \|}^2}]} \Big) \\
& \le {\mathbf{E}_\mathcal{C}}[{\Delta _{i,t}}( y )] + \frac{1}{{2{\alpha _t}}}\Big( {\sum\limits_{j = 1}^n {{{[ {{W_{t + 1}}} ]}_{ij}}{\mathbf{E}_\mathcal{C}}[{{\| {y - {\hat{z}_{j,t + 1}}} \|}^2}]}  - {\mathbf{E}_\mathcal{C}}[{{\| {y - {z_{i,t + 1}}} \|}^2}] - {\mathbf{E}_\mathcal{C}}[{{\| {{x_{i,t}} - {z_{i,t + 1}}} \|}^2}}] \Big), \label{Lemma4-proof-eq6}
\end{flalign}
where the last equality holds due to \eqref{Algorithm1-eq3}; and the last inequality holds since ${W_{t + 1}}$ is doubly stochastic and ${\|  \cdot  \|^2}$ is convex.

We have
\begin{flalign}
\nonumber
{\mathbf{E}_\mathcal{C}}[{\| {y - {\hat{z}_{i,t + 1}}} \|^2}] - {\mathbf{E}_\mathcal{C}}[{\| {y - {z_{i,t + 1}}} \|^2}] & = {\mathbf{E}_\mathcal{C}}[\langle {y - {{\hat z}_{i,t + 1}} + y - {z_{i,t + 1}},y - {{\hat z}_{i,t + 1}} - y + {z_{i,t + 1}}} \rangle] \\
\nonumber
&\le {\mathbf{E}_\mathcal{C}}[\| {y - {\hat{z}_{i,t + 1}} + y - {z_{i,t + 1}}} \|\| {{z_{i,t + 1}} - {\hat{z}_{i,t + 1}}} \|] \\
&\le 4R( \mathbb{X} ){\mathbf{E}_\mathcal{C}}[\| {{z_{i,t + 1}} - {\hat{z}_{i,t + 1}}} \|], \label{Lemma4-proof-eq7}
\end{flalign}
where the first inequality holds due to the Cauchy–Schwarz inequality; and the last inequality holds due to \eqref{ass1-eq1}.

We have
\begin{flalign}
\nonumber
{( {{\mathbf{E}_\mathcal{C}}[ {\| {{z_{i,t + 1}} - {\hat{z}_{i,t + 1}}} \|} ]} )^2} &\le {{\tilde p}^2}{( {{\mathbf{E}_\mathcal{C}}[ {\| {{z_{i,t + 1}} - {\hat{z}_{i,t + 1}}} \|_d} ]} )^2} \\
\nonumber
&\le {{\tilde p}^2}{\mathbf{E}_\mathcal{C}}[ {{\| {{z_{i,t + 1}} - {\hat{z}_{i,t + 1}}} \|_d^2}} ]  \\
\nonumber
& \le {{\tilde p}^2}{\mathbf{E}_\mathcal{C}}[{\| {{z_{i,t + 1}} - {\hat{z}_{i,t}} - {s_{t + 1}}\mathcal{C}( {( {{z_{i,t + 1}} - {\hat{z}_{i,t}}} )/{s_{t + 1}}} )} \|_d^2}] \\
\nonumber
& = {{\tilde p}^2}s_{t + 1}^2{\mathbf{E}_\mathcal{C}}[{\| {( {{z_{i,t + 1}} - {\hat{z}_{i,t}}} )/{s_{t + 1}} - \mathcal{C}( {( {{z_{i,t + 1}} - {\hat{z}_{i,t}}} )/{s_{t + 1}}} )} \|_d^2}]\\
& \le {{\tilde p}^2}Cs_{t + 1}^2, \label{Lemma4-proof-eq8}
\end{flalign}
where the first inequality holds due to Lemma~1; the second inequality holds due to the Jensen’s inequality; the first equality holds due to \eqref{Algorithm1-eq1}; and the last inequality holds due to Assumption~5.

Summing \eqref{Lemma4-proof-eq2}--\eqref{Lemma4-proof-eq8} over $i \in [ n ]$, dividing by $n$, using $\sum\nolimits_{i = 1}^n {{{\left[ {{W_t}} \right]}_{ij}}}  = 1$, $\forall t \in {\mathbb{N}_ + }$, and rearranging terms yields \eqref{Lemma4-eq1}. 
\end{proof}

\begin{lemma}
Suppose Assumptions~1--2 and 4--5 hold. For all $i \in [ n ]$, let $\{ {{x_{i,t}}} \}$ be the sequences generated by Algorithm~1 with ${\gamma _t} = {\gamma _0}/{\alpha _t}$, where ${\gamma _0} \in ( {0,1/( {4G_2^2} )} ]$ is a constant. Then, for any $T \in {\mathbb{N}_ + }$,
\begin{subequations}
\begin{flalign}
\nonumber
&\frac{1}{n}\sum\limits_{i = 1}^n {\sum\limits_{t = 1}^T {{\mathbf{E}_\mathcal{C}}\Big[ {\langle {\nabla {f_{i,t}}( {{x_{i,t}}} ),{x_{i,t}} - y} \rangle  + \frac{{{{\| {{x_{i,t}} - {z_{i,t + 1}}} \|}^2}}}{{4{\alpha _t}}}} \Big]} } 
\le 2G_1^2\sum\limits_{t = 1}^T {{\alpha _t}}  + \frac{1}{n}\sum\limits_{i = 1}^n {\sum\limits_{t = 1}^T {{\mathbf{E}_\mathcal{C}}[ {{\Delta_{i,t}}( y )} ]} } \\
&\;\;\;\;\;\;\;\;\;\;\;\;\;\;\;\;\;\;\;\;\;\;\;\;\;\;\;\;\;\;\;\;\;\;\;\;\;\;\;\;\;\;\;\;\;\;\;\;\;\;\;\;\;\;\;\;\;\;\;\;\;\;\;\;\;\;\;\;\;\;\;\;\;\;\;\;\;\;\; + 2{\tilde p}\sqrt C R( \mathbb{X} )\sum\limits_{t = 1}^T {\frac{{{s_{t + 1}}}}{{{\alpha _t}}}}, \forall y \in {\mathcal{X}_T}, \label{Lemma5-eq1} \\
&\sum\limits_{i = 1}^n {\sum\limits_{t = 1}^T {\frac{1}{2}} } {\mathbf{E}_\mathcal{C}}\Big[ { {\frac{{v_{i,t + 1}^T{g_{i,t}}( {{x_{i,t}}} )}}{{{\gamma _t}}} + \frac{{{{\| {{x_{i,t}} - {z_{i,t + 1}}} \|}^2}}}{{2{\gamma _0}}}} } \Big] \le {\mathbf{E}_\mathcal{C}}[ {{\Lambda _T}( y )} ] + {{{\tilde \Lambda }_T}( y )}, \forall y \in {\mathcal{X}_T}, \label{Lemma5-eq2} \\
&\frac{1}{n}\sum\limits_{t = 1}^T {\sum\limits_{i = 1}^n {\sum\limits_{j = 1}^n {\mathbf{E}_\mathcal{C}}[{\| {{x_{i,t}} - {x_{j,t}}} \|}] } }  \le n{\vartheta _1} + {\tilde \vartheta _2}\sum\limits_{t = 1}^T {\sum\limits_{i = 1}^n {\mathbf{E}_\mathcal{C}}[{\| {\varepsilon _{i,t}^z} \|}] }, \label{Lemma5-eq3} \\
&\frac{1}{n}\sum\limits_{t = 1}^T {\sum\limits_{i = 1}^n {\sum\limits_{j = 1}^n {{\mathbf{E}_\mathcal{C}}[{{\| {{x_{i,t}} - {x_{j,t}}} \|}^2}]} } }  \le {{\tilde \vartheta }_3} + {{\tilde \vartheta }_4}\sum\limits_{t = 1}^T {\sum\limits_{i = 1}^n {{\mathbf{E}_\mathcal{C}}[{{\| {\varepsilon _{i,t}^z} \|}^2}]} }, \label{Lemma5-eq4} \\
&{\mathbf{E}_\mathcal{C}}[\| {{z_{i,t + 1}} - {x_{i,t}}} \|] \le {{G_1}{\alpha _t} + {G_2}{\gamma _0}{\mathbf{E}_\mathcal{C}}[\| {{{[ {{g_{i,t}}( {{x_{i,t}}} )} ]}_ + }} \|]}, \label{Lemma5-eq5}
\end{flalign}
\end{subequations}
where
\begin{flalign}
\nonumber
&{\Lambda _T}( y ) = \sum\limits_{i = 1}^n {\sum\limits_{t = 1}^T {\frac{{v_{i,t + 1}^T{g_{i,t}}( y )}}{{{\gamma _t}}}} }, \\
\nonumber
&{{\tilde \Lambda }_T}( y ) = \sum\limits_{i = 1}^n {\frac{{{{\| {y - {x_{i,1}}} \|}^2}}}{{2{\gamma _0}}}}  + 2n{G_1}R( \mathbb{X} )\sum\limits_{t = 1}^T {\frac{1}{{{\gamma _t}}}}  + 2n{\gamma _0}G_1^2\sum\limits_{t = 1}^T {\frac{1}{{\gamma _t^2}}}  + \frac{{2n{\tilde p}\sqrt C R( \mathbb{X} )}}{{{\gamma _0}}}\sum\limits_{t = 1}^T {{s_{t}}}, \\
\nonumber
&{\vartheta _1} = \frac{{2\tau }}{{\lambda ( {1 - \lambda } )}}\sum\limits_{i = 1}^n {\| {{\hat{z}_{i,1}}} \|}, {{\tilde \vartheta }_2} = \frac{{4 - 4\lambda  + 2n\tau }}{{1 - \lambda }}, \\
\nonumber
&{{\tilde \vartheta }_3} = \frac{{16n{\tau ^2}}}{{{\lambda ^2}( {1 - {\lambda ^2}} )}}{\Big( {\sum\limits_{i = 1}^n {\| {{\hat{z}_{i,1}}} \|} } \Big)^2}, {{\tilde \vartheta }_4} = \frac{{16{n^2}{\tau ^2}}}{{{{( {1 - \lambda } )}^2}}} + 32.
\end{flalign}
\end{lemma}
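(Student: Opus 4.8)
The plan is to prove the five inequalities \eqref{Lemma5-eq1}--\eqref{Lemma5-eq5} more or less independently, since each bundles together a distinct accumulation estimate. Inequalities \eqref{Lemma5-eq1} and \eqref{Lemma5-eq2} are the two main "per-step descent summed over time'' estimates, and both should be derived by starting from Lemma~5, choosing the arbitrary point $y$ in the feasible set $\mathcal{X}_T$, and telescoping the $\Delta_{i,t}(y)$ terms.

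First I would establish \eqref{Lemma5-eq1}. Starting from \eqref{Lemma4-eq1}, I would isolate the $\tilde\Delta_t$ term, whose definition couples the linear term $(G_1+G_2\|v_{i,t+1}\|)\|x_{i,t}-z_{i,t+1}\|$ against the negative quadratic $-\|x_{i,t}-z_{i,t+1}\|^2/(2\alpha_t)$. The key device is to split the quadratic into two halves: use one half together with Young's inequality $ab\le \tfrac14\,b^2\cdot(1/\text{scale}) + \text{scale}\cdot a^2$ to dominate the $G_1\|x_{i,t}-z_{i,t+1}\|$ piece by $2G_1^2\alpha_t + \tfrac{1}{8\alpha_t}\|x_{i,t}-z_{i,t+1}\|^2$ (accounting for the $2G_1^2\sum\alpha_t$ term on the right-hand side), while the $G_2\|v_{i,t+1}\|\|x_{i,t}-z_{i,t+1}\|$ piece is handled so that the leftover $v_{i,t+1}^Tg_{i,t}(y)$ contribution can be absorbed; the remaining quarter of the quadratic survives as the $\|x_{i,t}-z_{i,t+1}\|^2/(4\alpha_t)$ term carried on the left. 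Telescoping $\sum_t\Delta_{i,t}(y)$ and summing the compression residual $2\tilde p\sqrt C R(\mathbb X)\sum s_{t+1}/\alpha_t$ then gives \eqref{Lemma5-eq1}. For \eqref{Lemma5-eq2}, the same starting point is rescaled by $1/\gamma_t$: using $\gamma_t=\gamma_0/\alpha_t$ and $\gamma_0\le 1/(4G_2^2)$, the coefficient $\gamma_0 G_2^2\le 1/4$ is exactly what lets the $G_2\|v_{i,t+1}\|$ term be absorbed into the quadratic, producing the $\|x_{i,t}-z_{i,t+1}\|^2/(2\gamma_0)$ term; bounding $\langle\nabla f_{i,t},x_{i,t}-z_{i,t+1}\rangle$ and $\langle\nabla f_{i,t},x_{i,t}-y\rangle$ crudely by $G_1$ times $R(\mathbb X)$-diameter distances yields the explicit constants $2nG_1R(\mathbb X)\sum 1/\gamma_t$ and $2n\gamma_0 G_1^2\sum 1/\gamma_t^2$ collected in $\tilde\Lambda_T(y)$.

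Inequalities \eqref{Lemma5-eq3} and \eqref{Lemma5-eq4} are the consensus estimates and I would derive them directly from Lemma~4. Writing $\|x_{i,t}-x_{j,t}\|\le\|x_{i,t}-\bar z_t\|+\|\bar z_t-x_{j,t}\|$ and bounding each term via \eqref{Lemma3-eq1}, the geometric factor $\tau\lambda^{t-s-2}$ appears; summing over $t$ and swapping the order of summation converts $\sum_t\sum_s\lambda^{t-s-2}$ into a factor $1/(1-\lambda)$, which produces the denominators $1-\lambda$ and $\lambda(1-\lambda)$ in $\vartheta_1,\tilde\vartheta_2$. For \eqref{Lemma5-eq4} the squared version requires Cauchy--Schwarz (or power-mean) on the finite sums inside \eqref{Lemma3-eq1} before squaring, so that the geometric sum becomes $\sum\lambda^{2(\cdot)}\le 1/(1-\lambda^2)$, accounting for the $\lambda^2(1-\lambda^2)$ and $(1-\lambda)^2$ denominators in $\tilde\vartheta_3,\tilde\vartheta_4$. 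Finally \eqref{Lemma5-eq5} is the most immediate: apply \eqref{Lemma2-eq2} of Lemma~3 to the projection update \eqref{Algorithm1-eq6}, giving $\|z_{i,t+1}-x_{i,t}\|\le\alpha_t\|\omega_{i,t+1}\|$, then bound $\|\omega_{i,t+1}\|\le G_1+G_2\|v_{i,t+1}\|$ via \eqref{ass4-eq1a}--\eqref{ass4-eq1b} and substitute $v_{i,t+1}=\gamma_t[g_{i,t}(x_{i,t})]_+$ together with $\alpha_t\gamma_t=\gamma_0$.

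The main obstacle I anticipate is the bookkeeping in \eqref{Lemma5-eq1}--\eqref{Lemma5-eq2}: the delicate point is choosing the Young's-inequality weights so that precisely a quarter (resp. half) of the negative quadratic $-\|x_{i,t}-z_{i,t+1}\|^2/(2\alpha_t)$ survives on the left-hand side while the $G_1$ and $G_2\|v_{i,t+1}\|$ cross terms are each fully absorbed, and the constraint $\gamma_0\le 1/(4G_2^2)$ is used exactly to make the $v_{i,t+1}$-dependent absorption feasible without leaving an uncontrolled positive remainder. Getting these constants to line up with the stated $2G_1^2\sum\alpha_t$ and $\tilde\Lambda_T(y)$ expressions is where the real care is required; the consensus bounds \eqref{Lemma5-eq3}--\eqref{Lemma5-eq4}, by contrast, are routine geometric-series manipulations once Lemma~4 is invoked.
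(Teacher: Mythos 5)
Your proposal follows essentially the same route as the paper for all five parts: the Young's-inequality splitting of $\tilde\Delta_t$ with absorption of the $\|v_{i,t+1}\|^2$ term enabled by $\gamma_0\le 1/(4G_2^2)$ for \eqref{Lemma5-eq1}--\eqref{Lemma5-eq2}, the consensus/geometric-series argument via Lemma~4 for \eqref{Lemma5-eq3}--\eqref{Lemma5-eq4}, and \eqref{Lemma2-eq2} applied to \eqref{Algorithm1-eq6} for \eqref{Lemma5-eq5}. The only slip is that the $G_2\|v_{i,t+1}\|\,\|x_{i,t}-z_{i,t+1}\|$ cross term is absorbed against $v_{i,t+1}^Tg_{i,t}(x_{i,t})=\gamma_t\|[g_{i,t}(x_{i,t})]_+\|^2$ rather than against $v_{i,t+1}^Tg_{i,t}(y)$, which is simply dropped because it is nonpositive for $y\in\mathcal{X}_T$; this does not change the argument.
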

\begin{proof}
(i)
Since ${g_{i,t}}( y ) \le {\mathbf{0}_{{m_i}}}$, $\forall i \in [ n ]$, $\forall t \in {\mathbb{N}_ + }$ when $\forall y \in {\mathcal{X}_T}$, summing \eqref{Lemma4-eq1} over $t \in [ T ]$ gives
\begin{flalign}
\nonumber
&\;\;\;\;\;\frac{1}{n}\sum\limits_{i = 1}^n {\sum\limits_{t = 1}^T {{\mathbf{E}_\mathcal{C}}[ {\langle {\nabla {f_{i,t}}( {{x_{i,t}}} ),{x_{i,t}} - y} \rangle } ]} } \\
& \le \frac{1}{n}\sum\limits_{i = 1}^n {\sum\limits_{t = 1}^T {{\mathbf{E}_\mathcal{C}}\Big[ { - v_{i,t + 1}^T{g_{i,t}}( {{x_{i,t}}} ) + {\Delta _{i,t}}( y ) + \frac{1}{n}{{\tilde \Delta }_t}} \Big] + 2{\tilde p}\sqrt C R( \mathbb{X} )\sum\limits_{t = 1}^T {\frac{{{s_{t + 1}}}}{{{\alpha _t}}}} } }. \label{Lemma5-proof-eq1}
\end{flalign}
We have
\begin{flalign}
\nonumber
&\;\;\;\;\;\frac{1}{n}\sum\limits_{i = 1}^n {\sum\limits_{t = 1}^T {( {{G_1} + {G_2}\| {{v_{i,t + 1}}} \|} )} } \| {{x_{i,t}} - {z_{i,t + 1}}} \| \\
& \le \frac{1}{n}\sum\limits_{i = 1}^n {\sum\limits_{t = 1}^T {\Big( {2G_1^2{\alpha _t} + 2G_2^2{\alpha _t}{{\| {{v_{i,t + 1}}} \|}^2} + \frac{{{{\| {{x_{i,t}} - {z_{i,t + 1}}} \|}^2}}}{{4{\alpha _t}}}} \Big)} }. \label{Lemma5-proof-eq2}
\end{flalign}
From \eqref{Algorithm1-eq4}, for all $\forall t \in {\mathbb{N}_ + }$, we have
\begin{flalign}
\| {{v_{i,t + 1}}} \| = {\gamma _t}\| {{{[ {{g_{i,t}}( {{x_{i,t}}} )} ]}_ + }} \|. \label{Lemma5-proof-eq3}
\end{flalign}
From \eqref{Lemma5-proof-eq3}, \eqref{Algorithm1-eq4} and the fact that ${\varphi ^T}{[ \varphi  ]_ + } = {\| {{{[ \varphi  ]}_ + }} \|^2}$ for any vector $\varphi $, we have
\begin{flalign}
\nonumber
2G_2^2{\alpha _t}{\mathbf{E}_\mathcal{C}}[{\| {{v_{i,t + 1}}} \|^2}] - {\mathbf{E}_\mathcal{C}}[v_{i,t + 1}^T{g_{i,t}}( {{x_{i,t}}} )]
& = 2G_2^2{\alpha _t}\gamma _t^2{\mathbf{E}_\mathcal{C}}[{\| {{{[ {{g_{i,t}}( {{x_{i,t}}} )} ]}_ + }} \|^2}] - {\gamma _t}{\mathbf{E}_\mathcal{C}}[{\| {{{[ {{g_{i,t}}( {{x_{i,t}}} )} ]}_ + }} \|^2}] \\
& = ( {2G_2^2{\gamma _0} - 1} ){\gamma _t}{\mathbf{E}_\mathcal{C}}[{\| {{{[ {{g_{i,t}}( {{x_{i,t}}} )} ]}_ + }} \|^2}] \le 0, \label{Lemma5-proof-eq4}
\end{flalign}
where the last equality holds due to ${\gamma _t} = {\gamma _0}/{\alpha _t}$; and the inequality holds due to ${\gamma _0} \in ( {0,1/( {4G_2^2} )} ]$.

Combining \eqref{Lemma5-proof-eq1}--\eqref{Lemma5-proof-eq2} and \eqref{Lemma5-proof-eq4} yields \eqref{Lemma5-eq1}.

(ii)
From the Cauchy--Schwarz inequality and \eqref{ass1-eq1}, we have
\begin{flalign}
\langle {\nabla {f_{i,t}}( {{x_{i,t}}} ),y - {x_{i,t}}} \rangle  \le \| {\nabla {f_{i,t}}( {{x_{i,t}}} )} \|\| {y - {x_{i,t}}} \| \le 2{G_1}R( \mathbb{X} ), \forall y \in \mathbb{X}. \label{Lemma5-proof-ii-eq1}
\end{flalign}
Dividing \eqref{Lemma4-eq1} by ${\gamma _t}$, using \eqref{Lemma5-proof-ii-eq1}, and summing over $t \in [ T ]$ yields
\begin{flalign}
\nonumber
&\;\;\;\;\;\sum\limits_{i = 1}^n {\sum\limits_{t = 1}^T {\frac{{{\mathbf{E}_\mathcal{C}}[ {v_{i,t + 1}^T{g_{i,t}}( {{x_{i,t}}} )} ]}}{{{\gamma _t}}}} } \\
& \le {\mathbf{E}_\mathcal{C}}[ {{\Lambda _T}( y )} ] + {2n{G_1}R( \mathbb{X} )}\sum\limits_{t = 1}^T {\frac{1}{{{\gamma _t}}}}  + \sum\limits_{i = 1}^n {\sum\limits_{t = 1}^T {\frac{{{\mathbf{E}_\mathcal{C}}[ {{\Delta _{i,t}}( y )} ]}}{{{\gamma _t}}} + } } \sum\limits_{t = 1}^T {\frac{{{\mathbf{E}_\mathcal{C}}[ {{{\tilde \Delta }_t}} ]}}{{{\gamma _t}}}}  + \frac{{2n{\tilde p}\sqrt C R( \mathbb{X} )}}{{{\gamma _0}}}\sum\limits_{t = 1}^T {{s_{t + 1}}}. \label{Lemma5-proof-ii-eq2}
\end{flalign}
We have
\begin{flalign}
\nonumber
&\;\;\;\;\;\sum\limits_{i = 1}^n {\sum\limits_{t = 1}^T {\frac{{( {{G_1} + {G_2}\| {{v_{i,t + 1}}} \|} )\| {{x_{i,t}} - {z_{i,t + 1}}} \|}}{{{\gamma _t}}}} } \\
& \le \sum\limits_{i = 1}^n {\sum\limits_{t = 1}^T {( {\frac{{2{\gamma _0}G_1^2}}{{\gamma _t^2}} + \frac{{2{\gamma _0}G_2^2{{\| {{v_{i,t + 1}}} \|}^2}}}{{\gamma _t^2}} + \frac{{{{\| {{x_{i,t}} - {z_{i,t + 1}}} \|}^2}}}{{4{\gamma _0}}}} )} }. \label{Lemma5-proof-ii-eq3}
\end{flalign}
From ${\gamma _t} = {\gamma _0}/{\alpha _t}$, we have
\begin{flalign}
\sum\limits_{t = 1}^T {\frac{{{\mathbf{E}_\mathcal{C}}[ {{\Delta _{i,t}}( y )} ]}}{{{\gamma _t}}}}  = \frac{1}{{2{\gamma _0}}}\sum\limits_{t = 1}^T {{\mathbf{E}_\mathcal{C}}[ {{{\| {y - {x_{i,t}}} \|}^2} - {{\| {y - {x_{i,t + 1}}} \|}^2}} ]}  \le \frac{{{{\| {y - {x_{i,1}}} \|}^2}}}{{2{\gamma _0}}}. \label{Lemma5-proof-ii-eq4}
\end{flalign}
From \eqref{Lemma5-proof-eq3}, \eqref{Algorithm1-eq4} and the fact that ${\varphi ^T}{[ \varphi  ]_ + } = {\| {{{[ \varphi  ]}_ + }} \|^2}$ for any vector $\varphi $, we have
\begin{flalign}
\nonumber
\frac{{2{\gamma _0}G_2^2{\mathbf{E}_\mathcal{C}}[ {{{\| {{v_{i,t + 1}}} \|}^2}} ]}}{{\gamma _t^2}} - \frac{{{\mathbf{E}_\mathcal{C}}[ {v_{i,t + 1}^T{g_{i,t}}( {{x_{i,t}}} )} ]}}{{2{\gamma _t}}} & = 2{\gamma _0}G_2^2{\mathbf{E}_\mathcal{C}}[ {{{\| {{{[ {{g_{i,t}}( {{x_{i,t}}} )} ]}_ + }} \|}^2}} ] - \frac{{{\mathbf{E}_\mathcal{C}}[ {{{\| {{{[ {{g_{i,t}}( {{x_{i,t}}} )} ]}_ + }} \|}^2}} ]}}{2} \\
& = ( {2{\gamma _0}G_2^2 - \frac{1}{2}} ){\mathbf{E}_\mathcal{C}}[ {{{\| {{{[ {{g_{i,t}}( {{x_{i,t}}} )} ]}_ + }} \|}^2}} ] \le 0, \label{Lemma5-proof-ii-eq5}
\end{flalign}
where the inequality holds due to ${\gamma _0} \in ( {0,1/( {4G_2^2} )} ]$.

Combining \eqref{Lemma5-proof-ii-eq2}--\eqref{Lemma5-proof-ii-eq5}, and noting that $\{ {{s_t}} \}$ is nonincreasing, we have \eqref{Lemma5-eq2}.

(iii)
From \eqref{Algorithm1-eq3} and $\sum\nolimits_{i = 1}^n {{{[ {{W_t}} ]}_{ij}}}  = \sum\nolimits_{j = 1}^n {{{[ {{W_t}} ]}_{ij}}}  = 1$, we have
\begin{flalign}
\nonumber
\frac{1}{n}\sum\limits_{i = 1}^n {\sum\limits_{j = 1}^n {{\mathbf{E}_\mathcal{C}}[ {\| {{x_{i,t}} - {x_{j,t}}} \|} ]} } & = \frac{1}{n}\sum\limits_{i = 1}^n {\sum\limits_{j = 1}^n {{\mathbf{E}_\mathcal{C}}\Big[ {\Big\| {\sum\limits_{k = 1}^n {{{[ {{W_t}} ]}_{ik}}{{\hat z}_{k,t}}}  - {{\bar z}_t} + {{\bar z}_t} - \sum\limits_{k = 1}^n {{{[ {{W_t}} ]}_{jk}}{{\hat z}_{k,t}}} } \Big\|} \Big]} } \\
\nonumber
& \le \frac{1}{n}\sum\limits_{i = 1}^n {\sum\limits_{j = 1}^n {{\mathbf{E}_\mathcal{C}}\Big[ {\Big\| {\sum\limits_{k = 1}^n {{{[ {{W_t}} ]}_{ik}}{{\hat z}_{k,t}}}  - {{\bar z}_t}} \Big\|} \Big]} }  + \frac{1}{n}\sum\limits_{i = 1}^n {\sum\limits_{j = 1}^n {{\mathbf{E}_\mathcal{C}}\Big[ {\Big\| {{{\bar z}_t} - \sum\limits_{k = 1}^n {{{[ {{W_t}} ]}_{jk}}{{\hat z}_{k,t}}} } \Big\|} \Big]} } \\
\nonumber
& = 2\sum\limits_{i = 1}^n {{\mathbf{E}_\mathcal{C}}\Big[ {\Big\| {\sum\limits_{j = 1}^n {{{[ {{W_t}} ]}_{ij}}{{\hat z}_{j,t}}}  - {{\bar z}_t}} \Big\|} \Big]}  = 2\sum\limits_{i = 1}^n {{\mathbf{E}_\mathcal{C}}\Big[ {\Big\| {\sum\limits_{j = 1}^n {{{[ {{W_t}} ]}_{ij}}( {{{\hat z}_{j,t}} - {{\bar z}_t}} )} } \Big\|} \Big]} \\
& \le 2\sum\limits_{i = 1}^n {\sum\limits_{j = 1}^n {{{[ {{W_t}} ]}_{ij}}} {\mathbf{E}_\mathcal{C}}[ {\| {{{\hat z}_{j,t}} - {{\bar z}_t}} \|} ]}  = 2\sum\limits_{i = 1}^n {{\mathbf{E}_\mathcal{C}}[ {\| {{{\hat z}_{i,t}} - {{\bar z}_t}} \|} ]}. \label{Lemma5-proof-iii-eq1}
\end{flalign}
We have
\begin{flalign}
\sum\limits_{t = 3}^T {\sum\limits_{s = 1}^{t - 2} {{\lambda ^{t - s - 2}}} } \sum\limits_{j = 1}^n {{\mathbf{E}_\mathcal{C}}[ {\| {\varepsilon _{j,s}^z} \|} ]}  = \sum\limits_{t = 1}^{T - 2} {\sum\limits_{j = 1}^n {{\mathbf{E}_\mathcal{C}}[ {\| {\varepsilon _{j,t}^z} \|} ]} \sum\limits_{s = 0}^{T - t - 2} {{\lambda ^s}} }  \le \frac{1}{{1 - \lambda }}\sum\limits_{t = 1}^{T - 2} {\sum\limits_{j = 1}^n {{\mathbf{E}_\mathcal{C}}[ {\| {\varepsilon _{j,t}^z} \|} ]} }. \label{Lemma5-proof-iii-eq2}
\end{flalign}
From \eqref{Lemma5-proof-iii-eq1}, \eqref{Lemma3-eq1}, \eqref{Lemma5-proof-iii-eq2}, we have
\begin{flalign}
\nonumber
&\;\;\;\;\;\frac{1}{n}\sum\limits_{t = 1}^T {\sum\limits_{i = 1}^n {\sum\limits_{j = 1}^n {{\mathbf{E}_\mathcal{C}}[ {\| {{x_{i,t}} - {x_{j,t}}} \|} ]} } } \\
\nonumber
& \le 2\sum\limits_{t = 1}^T {\sum\limits_{i = 1}^n {{\mathbf{E}_\mathcal{C}}[ {\| {{{\hat z}_{i,t}} - {{\bar z}_t}} \|} ]} } \\
\nonumber
& \le 2\tau \sum\limits_{t = 1}^T {{\lambda ^{t - 2}}\sum\limits_{i = 1}^n {\sum\limits_{j = 1}^n {\| {{\hat{z}_{j,1}}} \|} } }  + \frac{2}{n}\sum\limits_{t = 2}^T {\sum\limits_{i = 1}^n {\sum\limits_{j = 1}^n {{\mathbf{E}_\mathcal{C}}[ {\| {\varepsilon _{j,t - 1}^z} \|} ]} } }  + 2\sum\limits_{t = 2}^T {\sum\limits_{i = 1}^n {{\mathbf{E}_\mathcal{C}}[ {\| {\varepsilon _{i,t - 1}^z} \|} ]} }  \\
\nonumber
&\;\;+ 2\tau \sum\limits_{t = 3}^T {\sum\limits_{i = 1}^n {\sum\limits_{s = 1}^{t - 2} {{\lambda ^{t - s - 2}}\sum\limits_{j = 1}^n {{\mathbf{E}_\mathcal{C}}[ {\| {\varepsilon _{j,s}^z} \|} ]} } } } \\
\nonumber
& \le \frac{{2\tau }}{{\lambda ( {1 - \lambda } )}}\sum\limits_{i = 1}^n {\sum\limits_{j = 1}^n {\| {{\hat{z}_{j,1}}} \|} }  + 4\sum\limits_{t = 2}^T {\sum\limits_{i = 1}^n {{\mathbf{E}_\mathcal{C}}[ {\| {\varepsilon _{i,t - 1}^z} \|} ]} }  + \frac{{2n\tau }}{{1 - \lambda }}\sum\limits_{t = 1}^{T - 2} {\sum\limits_{j = 1}^n {{\mathbf{E}_\mathcal{C}}[ {\| {\varepsilon _{j,t}^z} \|} ]} } \\
\nonumber
& = \frac{{2\tau }}{{\lambda ( {1 - \lambda } )}}\sum\limits_{i = 1}^n {\sum\limits_{j = 1}^n {\| {{\hat{z}_{j,1}}} \|} }  + 4\sum\limits_{t = 1}^{T - 1} {\sum\limits_{i = 1}^n {{\mathbf{E}_\mathcal{C}}[ {\| {\varepsilon _{i,t}^z} \|} ]} }  + \frac{{2n\tau }}{{1 - \lambda }}\sum\limits_{t = 1}^{T - 2} {\sum\limits_{i = 1}^n {{\mathbf{E}_\mathcal{C}}[ {\| {\varepsilon _{i,t}^z} \|} ]} } \\
& \le n{\vartheta _1} + {{\tilde \vartheta }_2}\sum\limits_{t = 1}^T {\sum\limits_{i = 1}^n {{\mathbf{E}_\mathcal{C}}[ {\| {\varepsilon _{i,t}^z} \|} ]} }. \label{Lemma5-proof-iii-eq3}
\end{flalign}
Therefore, from \eqref{Lemma5-proof-iii-eq3}, we know that \eqref{Lemma5-eq3} holds.

(iv)
Similar to the way to get \eqref{Lemma5-proof-iii-eq1}, from \eqref{Algorithm1-eq3} and $\sum\nolimits_{i = 1}^n {{{[ {{W_t}} ]}_{ij}}}  = \sum\nolimits_{j = 1}^n {{{[ {{W_t}} ]}_{ij}}}  = 1$, and ${\|  \cdot  \|^2}$ is convex, we have
\begin{flalign}
\frac{1}{n}\sum\limits_{i = 1}^n {\sum\limits_{j = 1}^n {{\mathbf{E}_\mathcal{C}}[{{\| {{x_{i,t}} - {x_{j,t}}} \|}^2}]} }  \le 4\sum\limits_{i = 1}^n {{\mathbf{E}_\mathcal{C}}[{{\| {{{\hat z}_{i,t}} - {{\bar z}_t}} \|}^2}]}. \label{Lemma5-proof-iv-eq1}
\end{flalign}
From \eqref{Lemma3-eq1}, we have
\begin{flalign}
\nonumber
&\;\;\;\;\;4\sum\limits_{t = 1}^T {\sum\limits_{i = 1}^n {{\mathbf{E}_\mathcal{C}}[{{\| {{{\hat z}_{i,t}} - {{\bar z}_t}} \|}^2}]} } \\
\nonumber
& \le 4{\sum\limits_{t = 1}^T \sum\limits_{i = 1}^n {\mathbf{E}_\mathcal{C}}\Big[\Big( {\tau {\lambda ^{t - 2}}\sum\limits_{j = 1}^n {\| {{{\hat z}_{j,1}}} \|}  + \| {\varepsilon _{i,t - 1}^z} \| + \frac{1}{n}\sum\limits_{j = 1}^n {\| {\varepsilon _{j,t - 1}^z} \|}  + \tau \sum\limits_{s = 1}^{t - 2} {{\lambda ^{t - s - 2}}\sum\limits_{j = 1}^n {\| {\varepsilon _{j,s}^z} \|} } } \Big)^2}\Big] \\
\nonumber
& \le 16\sum\limits_{i = 1}^n \sum\limits_{t = 1}^T {\mathbf{E}_\mathcal{C}}\Big[{\Big( {{\Big( {\tau {\lambda ^{t - 2}}\sum\limits_{j = 1}^n {\| {{{\hat z}_{j,1}}} \|} } \Big)^2} + {{\| {\varepsilon _{i,t - 1}^z} \|}^2} + {\Big( {\frac{1}{n}\sum\limits_{j = 1}^n {\| {\varepsilon _{j,t - 1}^z} \|} } \Big)^2} + {\Big( {\tau \sum\limits_{s = 1}^{t - 2} {{\lambda ^{t - s - 2}}\sum\limits_{j = 1}^n {\| {\varepsilon _{j,s}^z} \|} } } \Big)^2}} \Big)} \Big]  \\
\nonumber
& \le 16\sum\limits_{i = 1}^n \sum\limits_{t = 1}^T {\mathbf{E}_\mathcal{C}}\Big[\Big( {\Big( {\tau {\lambda ^{t - 2}}\sum\limits_{j = 1}^n {\| {{{\hat z}_{j,1}}} \|} } \Big)^2} + {{\| {\varepsilon _{i,t - 1}^z} \|}^2} + \frac{1}{n}\sum\limits_{j = 1}^n {{{\| {\varepsilon _{j,t - 1}^z} \|}^2}}  \\
\nonumber
& \;\;\;\;\;\;\;\;\;\;\;\;\;\;\;\;\;\;\;\;\;\;\;\;\;\;\;\;\;\;\; \;\;\;\;\;\;\;\;\;\;\;\;\;\;\;\;\;\;\;\;\;\;\;\;\;\;\;\;\;\;\;\;\;\;\;\;\;\;\;\;\;\;\;\; + {\tau ^2}\sum\limits_{s = 1}^{t - 2} {{\lambda ^{t - s - 2}}\sum\limits_{s = 1}^{t - 2} {{\lambda ^{t - s - 2}}} {\Big( {\sum\limits_{j = 1}^n {\| {\varepsilon _{j,s}^z} \|} } \Big)^2}}  \Big)\Big] \\
\nonumber
& \le 16\sum\limits_{i = 1}^n {\sum\limits_{t = 1}^T {\mathbf{E}_\mathcal{C}}\Big[{\Big( {{\Big( {\tau {\lambda ^{t - 2}}\sum\limits_{j = 1}^n {\| {{{\hat z}_{j,1}}} \|} } \Big)^2} + 2{{\| {\varepsilon _{i,t - 1}^z} \|}^2} + \frac{{n{\tau ^2}}}{{1 - \lambda }}\sum\limits_{s = 1}^{t - 2} {{\lambda ^{t - s - 2}}} \sum\limits_{j = 1}^n {{{\| {\varepsilon _{j,s}^z} \|}^2}} } \Big)} }\Big] \\
\nonumber
& = 16\sum\limits_{i = 1}^n {\sum\limits_{t = 1}^T {\mathbf{E}_\mathcal{C}}\Big[{\Big( {{\Big( {\tau {\lambda ^{t - 2}}\sum\limits_{j = 1}^n {\| {{{\hat z}_{j,1}}} \|} } \Big)^2} + 2{{\| {\varepsilon _{i,t - 1}^z} \|}^2}} \Big) + \frac{{16{n^2}{\tau ^2}}}{{1 - \lambda }}\sum\limits_{j = 1}^n {\sum\limits_{t = 1}^{T - 2} {{{\| {\varepsilon _{j,t}^z} \|}^2}} } \sum\limits_{s = 0}^{T - t - 2} {{\lambda ^s}} } }\Big]  \\
& \le {{\tilde \vartheta }_3} + {{\tilde \vartheta }_4}\sum\limits_{t = 1}^T {\sum\limits_{i = 1}^n {{\mathbf{E}_\mathcal{C}}[{{\| {\varepsilon _{i,t}^z} \|}^2}]} }, \label{Lemma5-proof-iv-eq2}
\end{flalign}
where the third inequality holds due to the H\"{o}lder’s inequality.

Therefore, from \eqref{Lemma5-proof-iv-eq1}--\eqref{Lemma5-proof-iv-eq2}, we know that \eqref{Lemma5-eq4} holds.

(v)
Applying \eqref{Lemma2-eq2} to the update \eqref{Algorithm1-eq6} gives
\begin{flalign}
\nonumber
{\mathbf{E}_\mathcal{C}}[\| {{z_{i,t + 1}} - {x_{i,t}}} \|] \le {{\alpha _t}{\mathbf{E}_\mathcal{C}}[\| {{\omega _{i,t + 1}}} \|]} &= {{\alpha _t}{\mathbf{E}_\mathcal{C}}[\| {\nabla {f_{i,t}}( {{x_{i,t}}} ) + {{{\big( {\nabla {g_{i,t}}( {{x_{i,t}}} )} \big)}^T}}{v_{i,t + 1}}} \|]} \\
\nonumber
& \le {{\alpha _t}}{\mathbf{E}_\mathcal{C}}[ {\| {\nabla {f_{i,t}}( {{x_{i,t}}} )} \| + \| {{{{\big( {\nabla {g_{i,t}}( {{x_{i,t}}} )} \big)}^T}}{v_{i,t + 1}}} \|}] \\
\nonumber
& \le {{\alpha _t}}{\mathbf{E}_\mathcal{C}}[{{G_1} + {G_2}{\gamma _t}\| {{{[ {{g_{i,t}}( {{x_{i,t}}} )} ]}_ + }} \|}] \\
& = {\mathbf{E}_\mathcal{C}}[{{G_1}{\alpha _t} + {G_2}{\gamma _0}\| {{{[ {{g_{i,t}}( {{x_{i,t}}} )} ]}_ + }} \|}], \label{Lemma5-proof-v-eq1}
\end{flalign}
where the first equality holds due to \eqref{Algorithm1-eq5}; the last inequality holds due to \eqref{ass4-eq1a}, \eqref{ass4-eq1b}, and \eqref{Lemma5-proof-eq3}; and last equality holds due to ${\gamma _t} = {\gamma _0}/{\alpha _t}$.

Therefore, from \eqref{Lemma5-proof-v-eq1}, we know that \eqref{Lemma5-eq5} holds.
\end{proof}
\begin{lemma}
Under the same conditions as stated in Lemma~6, and supposing that Assumption~3 holds, for any $T \in {\mathbb{N}_ + }$ and any $y \in {\mathcal{X}_T}$, it holds that
\begin{subequations}
\begin{flalign}
\nonumber
&\frac{1}{n}\sum\limits_{i = 1}^n {\sum\limits_{t = 1}^T {{\mathbf{E}_\mathcal{C}}[ {\langle {\nabla {f_t}( {{x_{i,t}}} ),{x_{i,t}} - y} \rangle } ]} }
\le \hat \vartheta + {\vartheta _2}\sum\limits_{t = 1}^T {{\alpha _t}}  + \tilde p\sqrt C {{\tilde \vartheta }_1}\sum\limits_{t = 1}^T {{s_t}} \\
&\;\;\;\;\;\;\;\;\;\;\;\;\;\;\;\;\;\;\;\;\;\;\;\;\;\;\;\;\;\;\;\;\;\;\;\;\;\;\;\;\;\;\;\;\;\;\;\;\;\;\; + 2{\tilde p}\sqrt C R( \mathbb{X} )\sum\limits_{t = 1}^T {\frac{{{s_t}}}{{{\alpha _t}}}}  + \frac{1}{n}\sum\limits_{i = 1}^n {\sum\limits_{t = 1}^T {{\mathbf{E}_\mathcal{C}}[ {{\Delta _{i,t}}( y )} ]} }, \label{Lemma6-eq1} \\
&\frac{1}{n}\sum\limits_{i = 1}^n {\sum\limits_{t = 1}^T {{\mathbf{E}_\mathcal{C}}[ {\| {{{[ {{g_t}( {{x_{i,t}}} )} ]}_ + }} \|} ]} }  \le \sqrt {{\vartheta _3}T + {\vartheta _4}T{{\tilde \Lambda }_T}( y ) + 4n{\tilde p^2}CG_2^2{{\tilde \vartheta }_4}T\sum\limits_{t = 1}^T {s_t^2} }, \label{Lemma6-eq2} \\
\nonumber
&\frac{1}{n}\sum\limits_{i = 1}^n {\sum\limits_{t = 1}^T {{\mathbf{E}_\mathcal{C}}[ {\| {{{[ {{g_t}( {{x_{i,t}}} )} ]}_ + }} \|} ]} }  \le n{G_2}{\vartheta _1} + {\vartheta _5}\sum\limits_{t = 1}^T {{\alpha _t}}  + {\vartheta _6}\sum\limits_{i = 1}^n {\sum\limits_{t = 1}^T {{\mathbf{E}_\mathcal{C}}[ {\| {{{[ {{g_{i,t}}( {{x_{i,t}}} )} ]}_ + }} \|} ]} } \\
&\;\;\;\;\;\;\;\;\;\;\;\;\;\;\;\;\;\;\;\;\;\;\;\;\;\;\;\;\;\;\;\;\;\;\;\;\;\;\;\; + n{\tilde p}\sqrt C {G_2}{{\tilde \vartheta }_2}\sum\limits_{t = 1}^T {{s_t}}, \label{Lemma6-eq3}
\end{flalign}
\end{subequations}
where
\begin{flalign}
\nonumber
&\hat \vartheta  = 2R( \mathbb{X} )L{\vartheta _1} + {G_1}{\vartheta _1}, {{\tilde \vartheta }_1} = 2R( \mathbb{X} )L{{\tilde \vartheta }_2} + {G_1}{{\tilde \vartheta }_2},
{\vartheta _2} = 2G_1^2 + \tilde \vartheta _1^2, {\vartheta _3} = 2G_2^2{{\tilde \vartheta }_3},  \\
\nonumber
&{\vartheta _4} = \frac{{4\max \{ {1,2G_2^2{{\tilde \vartheta }_4}} \}}}{{\min \{ {1,\frac{1}{{2{\gamma _0}}}} \}}}, {\vartheta _5} = {n{G_1}{G_2}{{\tilde \vartheta }_2}}, {\vartheta _6} = {1  + G_2^2{\gamma _0}{{\tilde \vartheta }_2}}.
\end{flalign}
\end{lemma}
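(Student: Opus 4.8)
The plan is to reduce the three claims to the bounds already collected in Lemma~6 by passing from the global objects $\nabla f_t$ and $g_t$ to the per-agent objects $\nabla f_{i,t}$ and $g_{i,t}$, paying a consensus-error penalty each time, and then absorbing every residual term into the constants $\hat\vartheta,\vartheta_2,\dots,\vartheta_6$.

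For \eqref{Lemma6-eq1} I would first write $\nabla f_t(x_{i,t})=\frac1n\sum_{j=1}^n\nabla f_{j,t}(x_{i,t})$ and perform a double gradient swap: replace $\nabla f_{j,t}(x_{i,t})$ by $\nabla f_{j,t}(x_{j,t})$ using the $L$-Lipschitz gradient bound of Assumption~3 together with $\|x_{i,t}-y\|\le 2R(\mathbb{X})$, and replace $x_{i,t}-y$ by $x_{j,t}-y$ using \eqref{ass4-eq1a}. This yields $\frac1n\sum_i\langle\nabla f_t(x_{i,t}),x_{i,t}-y\rangle\le\frac1n\sum_j\langle\nabla f_{j,t}(x_{j,t}),x_{j,t}-y\rangle+\frac{2R(\mathbb{X})L+G_1}{n^2}\sum_{i,j}\|x_{i,t}-x_{j,t}\|$. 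Summing over $t$, I would bound the local inner-product sum by \eqref{Lemma5-eq1}, keeping the quadratic term $\frac{\|x_{i,t}-z_{i,t+1}\|^2}{4\alpha_t}$ on the left, and the consensus sum by \eqref{Lemma5-eq3}, which produces $\hat\vartheta$ and the leftover $\frac{\tilde\vartheta_1}{n}\sum_{t,i}\mathbf{E}_{\mathcal{C}}[\|\varepsilon_{i,t}^z\|]$. I would then split $\varepsilon_{i,t}^z=(\hat z_{i,t+1}-z_{i,t+1})+(z_{i,t+1}-x_{i,t})$; the first piece is controlled by the compression-error estimate $\mathbf{E}_{\mathcal{C}}[\|\hat z_{i,t+1}-z_{i,t+1}\|]\le\tilde p\sqrt{C}s_{t+1}$ from the proof of Lemma~5, giving the $\tilde p\sqrt C\,\tilde\vartheta_1\sum_t s_t$ term, while the second is handled by Young's inequality $\tilde\vartheta_1\|z_{i,t+1}-x_{i,t}\|\le\tilde\vartheta_1^2\alpha_t+\frac{\|z_{i,t+1}-x_{i,t}\|^2}{4\alpha_t}$. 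The crucial point is that this quadratic residual cancels exactly against the term retained from \eqref{Lemma5-eq1}, leaving the coefficient $\vartheta_2=2G_1^2+\tilde\vartheta_1^2$, and that monotonicity of $\{s_t\}$ turns $\sum_t s_{t+1}/\alpha_t$ into $\sum_t s_t/\alpha_t$.

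For \eqref{Lemma6-eq3} I would use that $g_t$ stacks the $g_{i,t}$, so $\|[g_t(x_{i,t})]_+\|\le\sum_j\|[g_{j,t}(x_{i,t})]_+\|$, and then use nonexpansiveness of $[\cdot]_+$ with the Lipschitz bound \eqref{ass4-eq2} to obtain $\|[g_{j,t}(x_{i,t})]_+\|\le\|[g_{j,t}(x_{j,t})]_+\|+G_2\|x_{i,t}-x_{j,t}\|$. Summing over $i,t$, \eqref{Lemma5-eq3} handles the consensus terms, while \eqref{Lemma5-eq5} converts $\mathbf{E}_{\mathcal{C}}[\|\varepsilon_{i,t}^z\|]$ into an $\alpha_t$ term, an $s_{t+1}$ term, and a local-violation term $G_2\gamma_0\mathbf{E}_{\mathcal{C}}[\|[g_{i,t}(x_{i,t})]_+\|]$; collecting these yields $\vartheta_5=nG_1G_2\tilde\vartheta_2$ and $\vartheta_6=1+G_2^2\gamma_0\tilde\vartheta_2$. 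For \eqref{Lemma6-eq2} I would first apply Jensen and Cauchy--Schwarz over the $nT$ summands to get $\frac1n\sum_{i,t}\mathbf{E}_{\mathcal{C}}[\|[g_t(x_{i,t})]_+\|]\le\sqrt{T\cdot\frac1n\sum_{i,t}\mathbf{E}_{\mathcal{C}}[\|[g_t(x_{i,t})]_+\|^2]}$, then bound the squared quantity by the same componentwise splitting but with $(a+b)^2\le2a^2+2b^2$, feeding the consensus squares into \eqref{Lemma5-eq4}. The local squared violations and the squared displacements $\|x_{i,t}-z_{i,t+1}\|^2$ are both controlled through \eqref{Lemma5-eq2} after substituting the identity $v_{i,t+1}^T g_{i,t}(x_{i,t})/\gamma_t=\|[g_{i,t}(x_{i,t})]_+\|^2$ and discarding $\Lambda_T(y)\le0$, valid since $y\in\mathcal{X}_T$ and $v_{i,t+1}\ge\mathbf{0}_{m_i}$; bounding the two nonnegative pieces jointly by their common multiplier explains the $\min\{1,1/(2\gamma_0)\}$ denominator in $\vartheta_4$, and the compression variance again contributes the $\sum_t s_t^2$ term.

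The main obstacle I expect is the bookkeeping in \eqref{Lemma6-eq1}: one must retain, rather than discard, the term $\frac{\|x_{i,t}-z_{i,t+1}\|^2}{4\alpha_t}$ carried by \eqref{Lemma5-eq1} and split $\varepsilon_{i,t}^z$ in precisely the way that makes the Young's-inequality residual cancel it, since otherwise the displacement $\|z_{i,t+1}-x_{i,t}\|$, whose natural bound \eqref{Lemma5-eq5} involves the constraint violation, would pollute the regret bound with a violation term absent from the statement. In \eqref{Lemma6-eq2} the analogous subtlety is recognizing that the squared local violations must be routed through \eqref{Lemma5-eq2} rather than bounded directly, so that the slack term $\Lambda_T(y)$ can be dropped by feasibility of $y$.
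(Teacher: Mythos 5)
Your proposal is correct and follows essentially the same route as the paper's proof: the same gradient/decision swaps paying consensus penalties absorbed via \eqref{Lemma5-eq3}--\eqref{Lemma5-eq4}, the same splitting of $\varepsilon_{i,t}^z$ into a compression part and a displacement part with the Young's-inequality residual cancelling the retained quadratic from \eqref{Lemma5-eq1}, and the same routing of the squared violations through \eqref{Lemma5-eq2} with $\Lambda_T(y)\le 0$. The only cosmetic difference is that in \eqref{Lemma6-eq1} you swap directly from $x_{i,t}$ to $x_{j,t}$ whereas the paper passes through the average $\bar z_t$; both yield the identical constants $\hat\vartheta$ and $\tilde\vartheta_1$.
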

\begin{proof}
(i)
From ${f_t}( x ) = \frac{1}{n}\sum\nolimits_{j = 1}^n {{f_{j,t}}( x )}$, we have
\begin{flalign}
\nabla {f_t}( x ) = \frac{1}{n}\sum\nolimits_{j = 1}^n {\nabla {f_{j,t}}( x )}. \label{Lemma6-proof-i-eq1}
\end{flalign}
From \eqref{Lemma6-proof-i-eq1}, we have
\begin{flalign}
\nonumber
\sum\limits_{i = 1}^n {{\mathbf{E}_\mathcal{C}}[ {\nabla {f_t}( {{x_{i,t}}} )} ]}  &= \frac{1}{n}\sum\limits_{i = 1}^n {\sum\limits_{j = 1}^n {{\mathbf{E}_\mathcal{C}}[ {\nabla {f_{j,t}}( {{x_{i,t}}} )} ]} } \\
\nonumber
& = \frac{1}{n}\sum\limits_{i = 1}^n {\sum\limits_{j = 1}^n {{\mathbf{E}_\mathcal{C}}[ {\nabla {f_{j,t}}( {{x_{j,t}}} )} ]} }  + \frac{1}{n}\sum\limits_{i = 1}^n {\sum\limits_{j = 1}^n {{\mathbf{E}_\mathcal{C}}[ {\nabla {f_{j,t}}( {{x_{i,t}}} ) - \nabla {f_{j,t}}( {{x_{j,t}}} )} ]} } \\
& = \sum\limits_{i = 1}^n {{\mathbf{E}_\mathcal{C}}[ {\nabla {f_{i,t}}( {{x_{i,t}}} )} ]}  + \frac{1}{n}\sum\limits_{i = 1}^n {\sum\limits_{j = 1}^n {{\mathbf{E}_\mathcal{C}}[ {\nabla {f_{j,t}}( {{x_{i,t}}} ) - \nabla {f_{j,t}}( {{x_{j,t}}} )} ]} }.
\label{Lemma6-proof-i-eq2}
\end{flalign}
From \eqref{Lemma6-proof-i-eq2}, \eqref{ass5-eq1} and \eqref{ass1-eq1}, we have
\begin{flalign}
\nonumber
&\;\;\;\;\;\frac{1}{n}\sum\limits_{i = 1}^n {\sum\limits_{t = 1}^T {{\mathbf{E}_\mathcal{C}}[ {\langle {\nabla {f_t}( {{x_{i,t}}} ),{x_{i,t}} - y} \rangle } ]} } \\
\nonumber
& \le \frac{1}{n}\sum\limits_{i = 1}^n {\sum\limits_{t = 1}^T {{\mathbf{E}_\mathcal{C}}[ {\langle {\nabla {f_t}( {{x_{i,t}}} ),{{\bar z}_t} - y} \rangle } ]} }  + \frac{1}{{{n^2}}}\sum\limits_{i = 1}^n {\sum\limits_{j = 1}^n {\sum\limits_{t = 1}^T {{\mathbf{E}_\mathcal{C}}[ {\| {\nabla {f_{j,t}}( {{x_{i,t}}} )} \|\| {{x_{i,t}} - {{\bar z}_t}} \|} ]} } } \\
\nonumber
& \le \frac{1}{n}\sum\limits_{i = 1}^n {\sum\limits_{t = 1}^T {{\mathbf{E}_\mathcal{C}}[ {\langle {\nabla {f_{i,t}}( {{x_{i,t}}} ),{{\bar z}_t} - y} \rangle } ]} }  + \frac{{2R( \mathbb{X} )L}}{{{n^2}}}\sum\limits_{i = 1}^n {\sum\limits_{j = 1}^n {\sum\limits_{t = 1}^T {{\mathbf{E}_\mathcal{C}}[ {\| {{x_{i,t}} - {x_{j,t}}} \|} ]} } } \\
\nonumber
&\;\; + \frac{{{G_1}}}{n}\sum\limits_{i = 1}^n {\sum\limits_{t = 1}^T {{\mathbf{E}_\mathcal{C}}[ {\| {{x_{i,t}} - {{\bar z}_t}} \|} ]} } \\
\nonumber
& \le \frac{1}{n}\sum\limits_{i = 1}^n {\sum\limits_{t = 1}^T {{\mathbf{E}_\mathcal{C}}[ {\langle {\nabla {f_{i,t}}( {{x_{i,t}}} ),{x_{i,t}} - y} \rangle } ]} }  + \frac{{2R( \mathbb{X} )L}}{{{n^2}}}\sum\limits_{i = 1}^n {\sum\limits_{j = 1}^n {\sum\limits_{t = 1}^T {{\mathbf{E}_\mathcal{C}}[ {\| {{x_{i,t}} - {x_{j,t}}} \|} ]} } } \\
&\;\; + \frac{{2{G_1}}}{n}\sum\limits_{i = 1}^n {\sum\limits_{t = 1}^T {{\mathbf{E}_\mathcal{C}}[ {\| {{x_{i,t}} - {{\bar z}_t}} \|} ]} }. 
\label{Lemma6-proof-i-eq3}
\end{flalign}

From \eqref{Lemma5-eq3}, we have
\begin{flalign}
\frac{1}{n^2}\sum\limits_{i = 1}^n {\sum\limits_{j = 1}^n {\sum\limits_{t = 1}^T {{\mathbf{E}_\mathcal{C}}[ {2R( \mathbb{X} )L\| {{x_{i,t}} - {x_{j,t}}} \|} ]} } }
 \le 2R( \mathbb{X} )L{\vartheta _1} + \frac{1}{n}\sum\limits_{i = 1}^n {\sum\limits_{t = 1}^T {{\mathbf{E}_\mathcal{C}}[ {2R( \mathbb{X} )L{{\tilde \vartheta }_2}\| {\varepsilon _{i,t}^z} \|} ]} }.
\label{Lemma6-proof-i-eq4}
\end{flalign}

From \eqref{Lemma5-proof-iii-eq3}, we have
\begin{flalign}
\frac{{2{G_1}}}{n}\sum\limits_{i = 1}^n {\sum\limits_{t = 1}^T {{\mathbf{E}_\mathcal{C}}[ {\| {{{\bar z}_{i,t}} - {{\bar z}_t}} \|} ]} }  \le {G_1}{\vartheta _1} + \frac{{{G_1}{{\tilde \vartheta }_2}}}{n}\sum\limits_{t = 1}^T {\sum\limits_{i = 1}^n {{\mathbf{E}_\mathcal{C}}[ {\| {\varepsilon _{i,t}^z} \|} ]} }.
\label{Lemma6-proof-i-eq5}
\end{flalign}

From \eqref{Lemma4-proof-eq8}, we have
\begin{flalign}
\nonumber
&\;\;\;\;\;\frac{1}{n}\sum\limits_{t = 1}^T {\sum\limits_{i = 1}^n {{\mathbf{E}_\mathcal{C}}[ {{{\tilde \vartheta }_1}\| {\varepsilon _{i,t}^z} \|} ]} } \\
\nonumber
& = \frac{1}{n}\sum\limits_{t = 1}^T {\sum\limits_{i = 1}^n {{\mathbf{E}_\mathcal{C}}[ {{{\tilde \vartheta }_1}\| {{z_{i,t + 1}} - {{\hat z}_{i,t + 1}}} \|} ]} }  + \frac{1}{n}\sum\limits_{t = 1}^T {\sum\limits_{i = 1}^n {{\mathbf{E}_\mathcal{C}}[ {{{\tilde \vartheta }_1}\| {{z_{i,t + 1}} - {x_{i,t}}} \|} ]} } \\
& \le \tilde p\sqrt C {{\tilde \vartheta }_1}\sum\limits_{t = 1}^T {{s_{t + 1}}}  + \frac{1}{n}\sum\limits_{t = 1}^T {\sum\limits_{i = 1}^n {{\mathbf{E}_\mathcal{C}}\Big[ {\tilde \vartheta _1^2{\alpha _t} + \frac{{{{\| {{z_{i,t + 1}} - {x_{i,t}}} \|}^2}}}{{4{\alpha _t}}}} \Big]} }.
\label{Lemma6-proof-i-eq6}
\end{flalign}

Combining \eqref{Lemma6-proof-i-eq3}--\eqref{Lemma6-proof-i-eq4} and \eqref{Lemma5-eq1}, and noting that $\{ {{s_t}} \}$ is nonincreasing, we know that \eqref{Lemma6-eq1} holds.

(ii)
We have
\begin{flalign}
\nonumber
{\mathbf{E}_\mathcal{C}}[ {{{\| {{{[ {{g_{i,t}}( {{x_{i,t}}} )} ]}_ + }} \|}^2}} ] &= {\mathbf{E}_\mathcal{C}}[ {{{\| {{{[ {{g_{i,t}}( {{x_{i,t}}} )} ]}_ + } - {{[ {{g_{i,t}}( {{x_{j,t}}} )} ]}_ + } + {{[ {{g_{i,t}}( {{x_{j,t}}} )} ]}_ + }} \|}^2}} ] \\
\nonumber
& \ge \frac{1}{2}{\mathbf{E}_\mathcal{C}}[ {{{\| {{{[ {{g_{i,t}}( {{x_{j,t}}} )} ]}_ + }} \|}^2}} ] - {\mathbf{E}_\mathcal{C}}[ {{{\| {{{[ {{g_{i,t}}( {{x_{i,t}}} )} ]}_ + } - {{[ {{g_{i,t}}( {{x_{j,t}}} )} ]}_ + }} \|}^2}} ] \\
\nonumber
& \ge \frac{1}{2}{\mathbf{E}_\mathcal{C}}[ {{{\| {{{[ {{g_{i,t}}( {{x_{j,t}}} )} ]}_ + }} \|}^2}} ] - {\mathbf{E}_\mathcal{C}}[ {{{\| {{g_{i,t}}( {{x_{i,t}}} ) - {g_{i,t}}( {{x_{j,t}}} )} \|}^2}} ] \\
& \ge \frac{1}{2}{\mathbf{E}_\mathcal{C}}[ {{{\| {{{[ {{g_{i,t}}( {{x_{j,t}}} )} ]}_ + }} \|}^2}} ] - G_2^2{\mathbf{E}_\mathcal{C}}[ {{{\| {{x_{i,t}} - {x_{j,t}}} \|}^2}},
\label{Lemma6-proof-ii-eq1}
\end{flalign}
where the second inequality holds due to the nonexpansive property of the projection ${[  \cdot  ]_ + }$; and the last inequality holds due to \eqref{ass4-eq2}.

From ${g_t}( x ) = {\rm{col}}( {{g_{1,t}}( x ), \cdot  \cdot  \cdot ,{g_{n,t}}( x )} )$, we have
\begin{flalign}
\sum\limits_{i = 1}^n {\sum\limits_{j = 1}^n {\sum\limits_{t = 1}^T {{\mathbf{E}_\mathcal{C}}[ {{{\| {{{[ {{g_{i,t}}( {{x_{j,t}}} )} ]}_ + }} \|}^2}} ]} } }  = \sum\limits_{i = 1}^n {\sum\limits_{t = 1}^T {{\mathbf{E}_\mathcal{C}}[ {{{\| {{{[ {{g_t}( {{x_{i,t}}} )} ]}_ + }} \|}^2}} ]} }.
\label{Lemma6-proof-ii-eq2}
\end{flalign}
From \eqref{Lemma6-proof-ii-eq1}--\eqref{Lemma6-proof-ii-eq2}, we have
\begin{flalign}
\nonumber
&\;\;\;\;\;\frac{1}{n}\sum\limits_{i = 1}^n {\sum\limits_{t = 1}^T {{\mathbf{E}_\mathcal{C}}[ {{{\| {{{[ {{g_t}( {{x_{i,t}}} )} ]}_ + }} \|}^2}} ]} }  \\
\nonumber
&\le \frac{2}{n}\sum\limits_{i = 1}^n {\sum\limits_{j = 1}^n {\sum\limits_{t = 1}^T {{\mathbf{E}_\mathcal{C}}[ {{{\| {{{[ {{g_{i,t}}( {{x_{i,t}}} )} ]}_ + }} \|}^2}} ]} } }  + \frac{{2G_2^2}}{n}\sum\limits_{i = 1}^n {\sum\limits_{j = 1}^n {\sum\limits_{t = 1}^T {{\mathbf{E}_\mathcal{C}}[ {{{\| {{x_{i,t}} - {x_{j,t}}} \|}^2}} ]} } } \\
\nonumber
& \le {\vartheta _3} + 2\sum\limits_{i = 1}^n {\sum\limits_{t = 1}^T {{\mathbf{E}_\mathcal{C}}[ {{{\| {{{[ {{g_{i,t}}( {{x_{i,t}}} )} ]}_ + }} \|}^2}} ]} }  + 2G_2^2{{\tilde \vartheta }_4}\sum\limits_{i = 1}^n {\sum\limits_{t = 1}^T {{\mathbf{E}_\mathcal{C}}[ {{{\| {\varepsilon _{i,t}^z} \|}^2}} ]} } \\
\nonumber
& \le {\vartheta _3} + 2\sum\limits_{i = 1}^n {\sum\limits_{t = 1}^T {{\mathbf{E}_\mathcal{C}}[ {{{\| {{{[ {{g_{i,t}}( {{x_{i,t}}} )} ]}_ + }} \|}^2}} ]} }  + 4G_2^2{{\tilde \vartheta }_4}\sum\limits_{i = 1}^n {\sum\limits_{t = 1}^T {{\mathbf{E}_\mathcal{C}}[ {{{\| {{z_{i,t + 1}} - {x_{i,t}}} \|}^2}} ]} } \\
\nonumber
&\;\; + 4G_2^2{{\tilde \vartheta }_4}\sum\limits_{i = 1}^n {\sum\limits_{t = 1}^T {{\mathbf{E}_\mathcal{C}}[ {{{\| {{{\hat z}_{i,t + 1}} - {z_{i,t + 1}}} \|}^2}} ]} } \\
\nonumber
& \le {\vartheta _3} + 2\sum\limits_{i = 1}^n {\sum\limits_{t = 1}^T {{\mathbf{E}_\mathcal{C}}[ {{{\| {{{[ {{g_{i,t}}( {{x_{i,t}}} )} ]}_ + }} \|}^2}} ]} }  + 4G_2^2{{\tilde \vartheta }_4}\sum\limits_{i = 1}^n {\sum\limits_{t = 1}^T {{\mathbf{E}_\mathcal{C}}[ {{{\| {{z_{i,t + 1}} - {x_{i,t}}} \|}^2}} ]} }  \\
&\;\; + 4n{\tilde p^2}CG_2^2{{\tilde \vartheta }_4}\sum\limits_{t = 1}^T {s_t^2},
\label{Lemma6-proof-ii-eq3}
\end{flalign}
where the second inequality holds due to \eqref{Lemma5-eq4}; and the last inequality holds since \eqref{Lemma4-proof-eq8} holds and $\{ {{s_t}} \}$ is nonincreasing.

From ${g_{i,t}}( y ) \le {0_{{m_i}}}$, $\forall i \in [ n ]$, $\forall t \in {\mathbb{N}_ + }$ when $y \in {\mathcal{X}_T}$, we have
\begin{flalign}
{\Lambda _T}( y ) \le 0.
\label{Lemma6-proof-ii-eq4}
\end{flalign}
Combining \eqref{Lemma6-proof-ii-eq3}--\eqref{Lemma6-proof-ii-eq4} and \eqref{Lemma5-eq2} yields
\begin{flalign}
\frac{1}{n}\sum\limits_{i = 1}^n {\sum\limits_{t = 1}^T {{\mathbf{E}_\mathcal{C}}[ {{{\| {{{[ {{g_t}( {{x_{i,t}}} )} ]}_ + }} \|}^2}} ]} }  \le {\vartheta _3} + {\vartheta _4}{{\tilde \Lambda }_T}( y ) + 4n{\tilde p^2}CG_2^2{{\tilde \vartheta }_4}\sum\limits_{t = 1}^T {s_t^2}, \forall y \in {\mathcal{X}_T}.
\label{Lemma6-proof-ii-eq5}
\end{flalign}
Using the H\"{o}lder’s inequality, we have
\begin{flalign}
{\Big( {\frac{1}{n}\sum\limits_{i = 1}^n {\sum\limits_{t = 1}^T {{\mathbf{E}_\mathcal{C}}[ {\| {{{[ {{g_t}( {{x_{i,t}}} )} ]}_ + }} \|} ]} } } \Big)^2} \le \frac{T}{n}\sum\limits_{i = 1}^n {\sum\limits_{t = 1}^T {{\mathbf{E}_\mathcal{C}}[ {{{\| {{{[ {{g_t}( {{x_{i,t}}} )} ]}_ + }} \|}^2}} ]} }.
\label{Lemma6-proof-ii-eq6}
\end{flalign}
Combining \eqref{Lemma6-proof-ii-eq5}--\eqref{Lemma6-proof-ii-eq6} yields \eqref{Lemma6-eq2}.

(iii)
We have
\begin{flalign}
\nonumber
\frac{1}{n}\sum\limits_{j = 1}^n {\sum\limits_{t = 1}^T {{\mathbf{E}_\mathcal{C}}[ {\| {{{[ {{g_t}( {{x_{j,t}}} )} ]}_ + }} \|} ]} } &\le \frac{1}{n}\sum\limits_{i = 1}^n {\sum\limits_{j = 1}^n {\sum\limits_{t = 1}^T {{\mathbf{E}_\mathcal{C}}[ {\| {{{[ {{g_{i,t}}( {{x_{j,t}}} )} ]}_ + }} \|} ]} } } \\
\nonumber
& = \frac{1}{n}\sum\limits_{i = 1}^n {\sum\limits_{j = 1}^n {\sum\limits_{t = 1}^T {{\mathbf{E}_\mathcal{C}}[ {\| {{{[ {{g_{i,t}}( {{x_{i,t}}} )} ]}_ + } + {{[ {{g_{i,t}}( {{x_{j,t}}} )} ]}_ + } - {{[ {{g_{i,t}}( {{x_{i,t}}} )} ]}_ + }} \|} ]} } } \\
\nonumber
& \le \frac{1}{n}\sum\limits_{i = 1}^n {\sum\limits_{j = 1}^n {\sum\limits_{t = 1}^T {{\mathbf{E}_\mathcal{C}}[ {\| {{{[ {{g_{i,t}}( {{x_{i,t}}} )} ]}_ + }} \| + \| {{g_{i,t}}( {{x_{i,t}}} ) - {g_{i,t}}( {{x_{j,t}}} )} \|} ]} } } \\
& \le \frac{1}{n}\sum\limits_{i = 1}^n {\sum\limits_{j = 1}^n {\sum\limits_{t = 1}^T {{\mathbf{E}_\mathcal{C}}[ {\| {{{[ {{g_{i,t}}( {{x_{i,t}}} )} ]}_ + }} \| + {G_2}\| {{x_{i,t}} - {x_{j,t}}} \|} ]} } },
\label{Lemma6-proof-iii-eq1}
\end{flalign}
where the first inequality holds due to ${g_t}( x ) = {\rm{col}}( {{g_{1,t}}( x ), \cdot  \cdot  \cdot ,{g_{n,t}}( x )} )$; the second inequality holds due to the nonexpansive property of the projection ${[  \cdot  ]_ + }$; and the last inequality holds due to \eqref{ass4-eq2}.

From \eqref{Lemma5-eq3}, we have
\begin{flalign}
\nonumber
&\;\;\;\;\;\frac{1}{n}\sum\limits_{i = 1}^n {\sum\limits_{j = 1}^n {\sum\limits_{t = 1}^T {{\mathbf{E}_\mathcal{C}}[ {\| {{x_{i,t}} - {x_{j,t}}} \|} ]} } } \\
\nonumber
& \le n{\vartheta _1} + {{\tilde \vartheta }_2}\sum\limits_{i = 1}^n {\sum\limits_{t = 1}^T {{\mathbf{E}_\mathcal{C}}[ {\| {\varepsilon _{i,t}^z} \|} ]} } \\
\nonumber
& \le n{\vartheta _1} + {{\tilde \vartheta }_2}\sum\limits_{i = 1}^n {\sum\limits_{t = 1}^T {{\mathbf{E}_\mathcal{C}}[ {\| {{z_{i,t + 1}} - {x_{i,t}}} \|} ]} }  + {{\tilde \vartheta }_2}\sum\limits_{i = 1}^n {\sum\limits_{t = 1}^T {{\mathbf{E}_\mathcal{C}}[ {\| {{{\hat z}_{i,t + 1}} - {z_{i,t + 1}}} \|} ]} } \\
& \le n{\vartheta _1} + {{{\tilde \vartheta }_2}}\sum\limits_{i = 1}^n {\sum\limits_{t = 1}^T {( {{G_1}{\alpha _t} + {G_2}{\gamma _0}{\mathbf{E}_\mathcal{C}}[ {\| {{{[ {{g_{i,t}}( {{x_{i,t}}} )} ]}_ + }} \|} ]} )} }  + n{\tilde p}\sqrt C {{\tilde \vartheta }_2}\sum\limits_{t = 1}^T {{s_t}},
\label{Lemma6-proof-iii-eq2}
\end{flalign}
where the last inequality holds since \eqref{Lemma5-eq5} and \eqref{Lemma4-proof-eq8} hold, and $\{ {{s_t}} \}$ is nonincreasing.

Combining \eqref{Lemma6-proof-iii-eq1}--\eqref{Lemma6-proof-iii-eq2} yields \eqref{Lemma6-eq3}.
\end{proof}

\hspace{-3mm}\emph{B. Proof of Theorem~1}

(i)
From \eqref{theorem1-eq1}, for any $T \in \mathbb{N}_ +$, we have
\begin{flalign}
\sum\limits_{t = 1}^T {{\alpha _t}}  = {\alpha _0}\sum\limits_{t = 1}^T {\frac{1}{{{t^{{\theta _1}}}}}}  = {\alpha _0}\Big(\sum\limits_{t = 2}^T {\frac{1}{{{t^{{\theta _1}}}}}}  + 1\Big) \le {\alpha _0}\Big(\int_1^T {\frac{1}{{{t^{{\theta _1}}}}}dt}  + 1\Big) \le \frac{{{{\alpha _0}T^{1 - {\theta _1}}}}}{{1 - {\theta _1}}}.
\label{Theorem1-proof-i-eq1}
\end{flalign}
Similar to the way to get \eqref{Theorem1-proof-i-eq1}, from \eqref{theorem1-eq1} with ${\theta _2} \in ( {{\theta _1},1} )$, for any $T \in \mathbb{N}_ +$, we have
\begin{flalign}
&\sum\limits_{t = 1}^T {{s_t}}  = {s_0}\sum\limits_{t = 1}^T {\frac{1}{{{t^{{\theta _2}}}}}}  \le \frac{{{{s_0}T^{1 - {\theta _2}}}}}{{1 - {\theta _2}}}, \label{Theorem1-proof-i-eq2}\\
&\sum\limits_{t = 1}^T {\frac{{{s_t}}}{{{\alpha _t}}}}  = \frac{{{s_0}}}{{{\alpha _0}}}\sum\limits_{t = 1}^T {\frac{1}{{{t^{{\theta _2} - {\theta _1}}}}}}  \le \frac{{{{s_0}T^{1 + {\theta _1} - {\theta _2}}}}}{({1 + {\theta _1} - {\theta _2}}){\alpha _0}}.
\label{Theorem1-proof-i-eq3}
\end{flalign}
We have
\begin{flalign}
\nonumber
\frac{1}{n}\sum\limits_{i = 1}^n {\sum\limits_{t = 1}^T {{\mathbf{E}_\mathcal{C}}[ {{\Delta _{i,t}}( y )} ]} }  &= \frac{1}{n}\sum\limits_{i = 1}^n {\sum\limits_{t = 1}^T {{\mathbf{E}_\mathcal{C}}\Big[ {\frac{1}{{2{\alpha _t}}}( {{{\| {y - {x_{i,t}}} \|}^2} - {{\| {y - {x_{i,t + 1}}} \|}^2}} )} \Big]} }, \\
\nonumber
& = \frac{1}{n}\sum\limits_{i = 1}^n {\sum\limits_{t = 1}^T {\frac{1}{2}{\mathbf{E}_\mathcal{C}}\Big[ {\frac{{{{\| {y - {x_{i,t}}} \|}^2}}}{{{\alpha _{t - 1}}}} - \frac{{{{\| {y - {x_{i,t + 1}}} \|}^2}}}{{{\alpha _t}}} + ( {\frac{1}{{{\alpha _t}}} - \frac{1}{{{\alpha _{t - 1}}}}} ){{\| {y - {x_{i,t}}} \|}^2}} \Big]} } \\
\nonumber
& \le \frac{1}{n}\sum\limits_{i = 1}^n {\frac{1}{2}{\mathbf{E}_\mathcal{C}}\Big[ {\frac{{{{\| {y - {x_{i,1}}} \|}^2}}}{{{\alpha _0}}} - \frac{{{{\| {y - {x_{i,T + 1}}} \|}^2}}}{{{\alpha _T}}} + 4R{{( \mathbb{X} )}^2}( {\frac{1}{{{\alpha _T}}} - \frac{1}{{{\alpha _0}}}} )} \Big]} \\
&= \frac{{2R{{( \mathbb{X} )}^2}}}{{{\alpha _T}}} \le \frac{{2R{{( \mathbb{X} )}^2}}}{{{\alpha _0}}}{T^{{\theta _1}}}, \forall {y} \in \mathbb{X},
\label{Theorem1-proof-i-eq4}
\end{flalign}
where the first inequality holds since \eqref{ass1-eq1} holds and $\{ {{\alpha _t}} \}$ is nonincreasing; the last equality holds due to \eqref{ass1-eq1}; and the last inequality holds due to \eqref{theorem1-eq1}.

Combining \eqref{Lemma6-eq1} and \eqref{Theorem1-proof-i-eq1}--\eqref{Theorem1-proof-i-eq4}, from the arbitrariness of ${y} \in {\mathcal{X}_T}$, we have
\begin{flalign}
\nonumber
\mathbf{E}_\mathcal{C}[{{\rm{Net}\mbox{-}\rm{Reg}}( T )}] &\le {\hat \vartheta } + \frac{{{\vartheta _2}{\alpha _0}}}{{1 - {\theta _1}}}{T^{1 - {\theta _1}}} + \frac{{\tilde p\sqrt C {{\tilde \vartheta }_1}{s_0}}}{{1 - {\theta _2}}}{T^{1 - {\theta _2}}} \\
&\;\; + \frac{{2{\tilde p}\sqrt C R( \mathbb{X} ){s_0}}}{({1 + {\theta _1} - {\theta _2}}){\alpha _0}}{T^{1 + {\theta _1} - {\theta _2}}} + \frac{{2R{{( \mathbb{X} )}^2}}}{{{\alpha _0}}}{T^{{\theta _1}}}.
\label{Theorem1-proof-i-eq5}
\end{flalign}
From \eqref{theorem1-eq1} with ${\theta _2} = 1$, for any $T \in \mathbb{N}_ +$, we have
\begin{flalign}
&\sum\limits_{t = 1}^T {{s_t}}  = {s_0}\sum\limits_{t = 1}^T {\frac{1}{t}}  \le {s_0}\Big( {\int_1^T {\frac{1}{t}dt}  + 1} \Big) \le {s_0}( {\log ( T ) + 1} ) \le 2{s_0}\log ( T ), {\rm{if }}\; T \ge 3 \label{Theorem1-proof-i-eq6}\\
&\sum\limits_{t = 1}^T {\frac{{{s_t}}}{{{\alpha _t}}}}  = \frac{{{s_0}}}{{{\alpha _0}}}\sum\limits_{t = 1}^T {\frac{1}{{{t^{1 - {\theta _1}}}}}}  \le \frac{{{{s_0}T^{ {\theta _1}}}}}{{{\theta _1} }{\alpha _0}}.
\label{Theorem1-proof-i-eq7}
\end{flalign}
Combining \eqref{Lemma6-eq1}, \eqref{Theorem1-proof-i-eq1}, \eqref{Theorem1-proof-i-eq4} and \eqref{Theorem1-proof-i-eq6}--\eqref{Theorem1-proof-i-eq7}, from the arbitrariness of ${y} \in {\mathcal{X}_T}$, we have
\begin{flalign}
\nonumber
\mathbf{E}_\mathcal{C}[{{\rm{Net}\mbox{-}\rm{Reg}}( T )}] &\le {\hat \vartheta } + \frac{{{\vartheta _2}{\alpha _0}}}{{1 - {\theta _1}}}{T^{1 - {\theta _1}}} + 2\tilde p\sqrt C {{\tilde \vartheta }_1}{s_0}\log ( T ) \\
&\;\; + \frac{{2{\tilde p}\sqrt C R( \mathbb{X} ){s_0}}}{{ {\theta _1}}{\alpha _0}}{T^{{\theta _1}}} + \frac{{2R{{( \mathbb{X} )}^2}}}{{{\alpha _0}}}{T^{{\theta _1}}}.
\label{Theorem1-proof-i-eq8}
\end{flalign}
From \eqref{theorem1-eq1} with ${\theta _2} \in ( {1,1 + {\theta _1}} )$, for any $T \in \mathbb{N}_ +$, there exists a constant ${\rm Z_1} > 0$ such that
\begin{flalign}
&\sum\limits_{t = 1}^T {{s_t}}  = {s_0}\sum\limits_{t = 1}^T {\frac{1}{{{t^{{\theta _2}}}}}}  \le {\rm Z_1}{s_0}, \label{Theorem1-proof-i-eq9}\\
&\sum\limits_{t = 1}^T {\frac{{{s_t}}}{{{\alpha _t}}}}  = \frac{{{s_0}}}{{{\alpha _0}}}\sum\limits_{t = 1}^T {\frac{1}{{{t^{{\theta _2} - {\theta _1}}}}}}  \le \frac{{{{s_0}T^{1 + {\theta _1} - {\theta _2}}}}}{({1 + {\theta _1} - {\theta _2}}){\alpha _0}}.
\label{Theorem1-proof-i-eq10}
\end{flalign}
Combining \eqref{Lemma6-eq1}, \eqref{Theorem1-proof-i-eq1}, \eqref{Theorem1-proof-i-eq4} and \eqref{Theorem1-proof-i-eq9}--\eqref{Theorem1-proof-i-eq10}, from the arbitrariness of ${y} \in {\mathcal{X}_T}$, we have
\begin{flalign}
\nonumber
\mathbf{E}_\mathcal{C}[{{\rm{Net}\mbox{-}\rm{Reg}}( T )}] &\le {\hat \vartheta } + \frac{{{\vartheta _2}{\alpha _0}}}{{1 - {\theta _1}}}{T^{1 - {\theta _1}}} + \tilde p\sqrt C {{\tilde \vartheta }_1}{\rm Z_1}{s_0} \\
&\;\; + \frac{{2{\tilde p}\sqrt C R( \mathbb{X} ){s_0}}}{({1 + {\theta _1} - {\theta _2}}){\alpha _0}}{T^{1 + {\theta _1} - {\theta _2}}} + \frac{{2R{{( \mathbb{X} )}^2}}}{{{\alpha _0}}}{T^{{\theta _1}}}.
\label{Theorem1-proof-i-eq11}
\end{flalign}
From \eqref{theorem1-eq1} with ${\theta _2} = 1 + {\theta _1}$, for any $T \in \mathbb{N}_ +$, there exists a constant ${\rm Z_2} > 0$ such that
\begin{flalign}
&\sum\limits_{t = 1}^T {{s_t}}  = {s_0}\sum\limits_{t = 1}^T {\frac{1}{{{t^{{\theta _2}}}}}}  \le {\rm Z_2}{s_0}, \label{Theorem1-proof-i-eq12}\\
&\sum\limits_{t = 1}^T {\frac{{{s_t}}}{{{\alpha _t}}}}  = \frac{{{s_0}}}{{{\alpha _0}}}\sum\limits_{t = 1}^T {\frac{1}{t}}  \le \frac{{2{s_0}}}{{{\alpha _0}}}\log ( T ), {\rm{if }}\;T \ge 3.
\label{Theorem1-proof-i-eq13}
\end{flalign}
Combining \eqref{Lemma6-eq1}, \eqref{Theorem1-proof-i-eq1}, \eqref{Theorem1-proof-i-eq4} and \eqref{Theorem1-proof-i-eq12}--\eqref{Theorem1-proof-i-eq13}, from the arbitrariness of ${y} \in {\mathcal{X}_T}$, we have
\begin{flalign}
\nonumber
\mathbf{E}_\mathcal{C}[{{\rm{Net}\mbox{-}\rm{Reg}}( T )}] &\le {\hat \vartheta } + \frac{{{\vartheta _2}{\alpha _0}}}{{1 - {\theta _1}}}{T^{1 - {\theta _1}}} + \tilde p\sqrt C {{\tilde \vartheta }_1}{\rm Z_2}{s_0} \\
&\;\; + \frac{{4{\tilde p}\sqrt C R( \mathbb{X} ){s_0}}}{{{\alpha _0}}}\log ( T ) + \frac{{2R{{( \mathbb{X} )}^2}}}{{{\alpha _0}}}{T^{{\theta _1}}}.
\label{Theorem1-proof-i-eq14}
\end{flalign}
From \eqref{theorem1-eq1} with ${\theta _2} > 1 + {\theta _1}$, for any $T \in \mathbb{N}_ +$, there exists a constant ${\rm Z_3} > 0$ such that
\begin{flalign}
&\sum\limits_{t = 1}^T {{s_t}}  = {s_0}\sum\limits_{t = 1}^T {\frac{1}{{{t^{{\theta _2}}}}}}  \le {\rm Z_3}{s_0}, \label{Theorem1-proof-i-eq15}\\
&\sum\limits_{t = 1}^T {\frac{{{s_t}}}{{{\alpha _t}}}} = \frac{{{s_0}}}{{{\alpha _0}}}\sum\limits_{t = 1}^T {\frac{1}{{{t^{{\theta _2} - {\theta _1}}}}}}  \le {\rm Z_3}\frac{{{s_0}}}{{{\alpha _0}}}.
\label{Theorem1-proof-i-eq16}
\end{flalign}
Combining \eqref{Lemma6-eq1}, \eqref{Theorem1-proof-i-eq1}, \eqref{Theorem1-proof-i-eq4} and \eqref{Theorem1-proof-i-eq15}--\eqref{Theorem1-proof-i-eq16}, from the arbitrariness of ${y} \in {\mathcal{X}_T}$, we have
\begin{flalign}
\nonumber
\mathbf{E}_\mathcal{C}[{{\rm{Net}\mbox{-}\rm{Reg}}( T )}] &\le {\hat \vartheta } + \frac{{{\vartheta _2}{\alpha _0}}}{{1 - {\theta _1}}}{T^{1 - {\theta _1}}} + \tilde p\sqrt C {{\tilde \vartheta }_1}{\rm Z_3}{s_0} \\
&\;\; + \frac{{2{\tilde p}\sqrt C R( \mathbb{X} ){\rm Z_3}{s_0}}}{{{\alpha _0}}} + \frac{{2R{{( \mathbb{X} )}^2}}}{{{\alpha _0}}}{T^{{\theta _1}}}.
\label{Theorem1-proof-i-eq17}
\end{flalign}

From \eqref{Theorem1-proof-i-eq5}, \eqref{Theorem1-proof-i-eq8}, \eqref{Theorem1-proof-i-eq11}, \eqref{Theorem1-proof-i-eq14}, and \eqref{Theorem1-proof-i-eq17}, we know that \eqref{theorem1-eq2} holds.

(ii)
From \eqref{theorem1-eq1}, for any $T \in \mathbb{N}_ +$, we have
\begin{flalign}
&\sum\limits_{t = 1}^T {\frac{1}{{{\gamma _t}}} = } \sum\limits_{t = 1}^T {\frac{{{\alpha _t}}}{{{\gamma _0}}}}  \le \frac{{{\alpha _0}{T^{1 - {\theta _1}}}}}{{( {1 - {\theta _1}} ){\gamma _0}}}, \label{Theorem1-proof-ii-eq1}\\
&\sum\limits_{t = 1}^T \frac{{{\gamma _0}}}{{\gamma _t^2}} \le  {\alpha _0}\sum\limits_{t = 1}^T {\frac{1}{{{\gamma _t}}}}  = \frac{{{\alpha _0^2}}}{{{\gamma _0}}}\sum\limits_{t = 1}^T {\frac{1}{{{t^{{\theta _1}}}}}}  \le \frac{{{\alpha _0^2}{T^{1 - {\theta _1}}}}}{{( {1 - {\theta _1}} ){\gamma _0}}}, \label{Theorem1-proof-ii-eq2}\\
&\sum\limits_{t = 1}^T {s_t^2}  \le {s _0}\sum\limits_{t = 1}^T {{s_t}}.
\label{Theorem1-proof-ii-eq3}
\end{flalign}
From $\eqref{ass1-eq1}$, we have
\begin{flalign}
\sum\limits_{i = 1}^n {\frac{{{{\| {y - {x_{i,1}}} \|}^2}}}{{{\gamma _0}}}}  \le \frac{{4nR{{( \mathbb{X} )}^2}}}{{{\gamma _0}}}, \forall {y} \in \mathbb{X}.
\label{Theorem1-proof-ii-eq4}
\end{flalign}
Combining \eqref{Lemma6-eq2} and \eqref{Theorem1-proof-ii-eq1}--\eqref{Theorem1-proof-ii-eq4}, from \eqref{theorem1-eq1} with ${\theta _2} \in ( {{\theta _1},1} )$ and \eqref{Theorem1-proof-i-eq2}, we have
\begin{flalign}
\nonumber
{( {\frac{1}{n}\sum\limits_{i = 1}^n {\sum\limits_{t = 1}^T {{\mathbf{E}_\mathcal{C}}[ {\| {{{[ {{g_t}( {{x_{i,t}}} )} ]}_ + }} \|} ]} } } )^2} &\le {\vartheta _3}T + \frac{{2nR{{( \mathbb{X} )}^2}{\vartheta _4}}}{{{\gamma _0}}}T + \frac{{2n{G_1}R( \mathbb{X} ){\vartheta _4}{\alpha _0}}}{{( {1 - {\theta _1}} ){\gamma _0}}}{T^{2 - {\theta _1}}} + \frac{{2nG_1^2{\vartheta _4}{\alpha _0^2}}}{{( {1 - {\theta _1}} ){\gamma _0}}}{T^{2 - {\theta _1}}}\\
&\;\; + \frac{{2n{\tilde p}\sqrt C R( \mathbb{X} ){\vartheta _4}{s_0}}}{{( {1 - {\theta _2}} ){\gamma _0}}}{T^{2 - {\theta _2}}} + \frac{{4n{\tilde p^2}CG_2^2{{\tilde \vartheta }_4}{s_0^2}}}{{( {1 - {\theta _2}} )}}{T^{2 - {\theta _2}}}.
\label{Theorem1-proof-ii-eq5}
\end{flalign}
Combining \eqref{Lemma6-eq2} and \eqref{Theorem1-proof-ii-eq1}--\eqref{Theorem1-proof-ii-eq4}, from \eqref{theorem1-eq1} with ${\theta _2} = 1$ and \eqref{Theorem1-proof-i-eq6}, we have
\begin{flalign}
\nonumber
{( {\frac{1}{n}\sum\limits_{i = 1}^n {\sum\limits_{t = 1}^T {{\mathbf{E}_\mathcal{C}}[ {\| {{{[ {{g_t}( {{x_{i,t}}} )} ]}_ + }} \|} ]} } } )^2} &\le {\vartheta _3}T + \frac{{2nR{{( \mathbb{X} )}^2}{\vartheta _4}}}{{{\gamma _0}}}T + \frac{{2n{G_1}R( \mathbb{X} ){\vartheta _4}{\alpha _0}}}{{( {1 - {\theta _1}} ){\gamma _0}}}{T^{2 - {\theta _1}}} + \frac{{2nG_1^2{\vartheta _4}{\alpha _0^2}}}{{( {1 - {\theta _1}} ){\gamma _0}}}{T^{2 - {\theta _1}}}\\
&\;\; + \frac{{4n{\tilde p}\sqrt C R( \mathbb{X} ){\vartheta _4}{s_0}}}{{{\gamma _0}}}T\log ( T ) + 8n{\tilde p^2}CG_2^2{{\tilde \vartheta }_4}{s_0^2}T\log ( T ).
\label{Theorem1-proof-ii-eq6}
\end{flalign}
Combining \eqref{Lemma6-eq2} and \eqref{Theorem1-proof-ii-eq1}--\eqref{Theorem1-proof-ii-eq4}, from \eqref{theorem1-eq1} with ${\theta _2} > 1$, \eqref{Theorem1-proof-i-eq9}, \eqref{Theorem1-proof-i-eq12}, and \eqref{Theorem1-proof-i-eq15}, choosing ${\rm Z} = \max \{ {{{\rm Z}_1},{{\rm Z}_2},{{\rm Z}_3}} \}$, we have
\begin{flalign}
\nonumber
{( {\frac{1}{n}\sum\limits_{i = 1}^n {\sum\limits_{t = 1}^T {{\mathbf{E}_\mathcal{C}}[ {\| {{{[ {{g_t}( {{x_{i,t}}} )} ]}_ + }} \|} ]} } } )^2} &\le {\vartheta _3}T + \frac{{2nR{{( \mathbb{X} )}^2}{\vartheta _4}}}{{{\gamma _0}}}T + \frac{{2n{G_1}R( \mathbb{X} ){\vartheta _4}{\alpha _0}}}{{( {1 - {\theta _1}} ){\gamma _0}}}{T^{2 - {\theta _1}}} + \frac{{2nG_1^2{\vartheta _4}{\alpha _0^2}}}{{( {1 - {\theta _1}} ){\gamma _0}}}{T^{2 - {\theta _1}}}\\
&\;\;  + \frac{{2n{\tilde p}\sqrt C R( \mathbb{X} ){\vartheta _4}{\rm Z}{s_0}}}{{{\gamma _0}}}T + 4n{\tilde p^2}CG_2^2{{\tilde \vartheta }_4}{\rm Z}{s_0^2}T.
\label{Theorem1-proof-ii-eq7}
\end{flalign}
From \eqref{Theorem1-proof-ii-eq5}--\eqref{Theorem1-proof-ii-eq7}, we know that \eqref{theorem1-eq3} holds.

(iii)
We have
\begin{flalign}
\nonumber
{\mathbf{E}_\mathcal{C}}[{\Lambda _T}( {{x_s}} )] &= {\mathbf{E}_\mathcal{C}}\Big[\sum\limits_{i = 1}^n {\sum\limits_{t = 1}^T {\frac{{v_{i,t + 1}^T{g_{i,t}}( {{x_s}} )}}{{{\gamma _t}}}} }\Big]  = {\mathbf{E}_\mathcal{C}}\Big[\sum\limits_{i = 1}^n {\sum\limits_{t = 1}^T {[ {{g_{i,t}}( {{x_{i,t}}} )} ]_ + ^T{g_{i,t}}( {{x_s}} )} }\Big]  \\
\nonumber
& \le  - {\mathbf{E}_\mathcal{C}}\Big[\sum\limits_{i = 1}^n {\sum\limits_{t = 1}^T {{\varsigma _s}[ {{g_{i,t}}( {{x_{i,t}}} )} ]_ + ^T{\mathbf{1}_{{m_i}}}} }\Big] =  - {\varsigma _s}\sum\limits_{i = 1}^n {\sum\limits_{t = 1}^T {\mathbf{E}_\mathcal{C}}[{{{\| {{{[ {{g_{i,t}}( {{x_{i,t}}} )} ]}_ + }} \|}_1}}] } \\
& \le  - {\varsigma _s}\sum\limits_{i = 1}^n {\sum\limits_{t = 1}^T {\mathbf{E}_\mathcal{C}}[{\| {{{[ {{g_{i,t}}( {{x_{i,t}}} )} ]}_ + }} \|}] },
\label{Theorem1-proof-iii-eq1}
\end{flalign}
where the second equality holds due to \eqref{Algorithm1-eq5}; and the first inequality holds due to Assumption~6.

Selecting $y = {x_s}$ in \eqref{Lemma5-eq2}, from \eqref{Algorithm1-eq5} and \eqref{Theorem1-proof-iii-eq1}, we have
\begin{flalign}
{\varsigma _s}\sum\limits_{i = 1}^n {\sum\limits_{t = 1}^T {\mathbf{E}_\mathcal{C}}[{\| {{{[ {{g_{i,t}}( {{x_{i,t}}} )} ]}_ + }} \|}] }  \le {{\tilde \Lambda }_T}( {{x_s}} ),
\label{Theorem1-proof-iii-eq2}
\end{flalign}
Combining \eqref{Theorem1-proof-iii-eq2} and \eqref{Theorem1-proof-ii-eq1}--\eqref{Theorem1-proof-ii-eq4}, from \eqref{theorem1-eq1} with ${\theta _2} \in ( {{\theta _1},1} )$ and \eqref{Theorem1-proof-i-eq2}, we have
\begin{flalign}
\nonumber
\sum\limits_{i = 1}^n {\sum\limits_{t = 1}^T {{\mathbf{E}_\mathcal{C}}[ {\| {{{[ {{g_{i,t}}( {{x_{i,t}}} )} ]}_ + }} \|} ]} } &\le \frac{{2nR{{( \mathbb{X} )}^2}}}{{{\varsigma _s}{\gamma _0}}} + \frac{{2n{G_1}R( \mathbb{X} ){\alpha _0}}}{{( {1 - {\theta _1}} ){\varsigma _s}{\gamma _0}}}{T^{1 - {\theta _1}}} + \frac{{2nG_1^2{\alpha _0^2}}}{{( {1 - {\theta _1}} ){\varsigma _s}{\gamma _0}}}{T^{1 - {\theta _1}}} \\
&\;\; + \frac{{2n{\tilde p}\sqrt C R( \mathbb{X} ){s_0}}}{{( {1 - {\theta _2}} ){\varsigma _s}{\gamma _0}}}{T^{1 - {\theta _2}}}.
\label{Theorem1-proof-iii-eq3}
\end{flalign}
Combining \eqref{Lemma6-eq3}, \eqref{Theorem1-proof-i-eq1}, \eqref{Theorem1-proof-iii-eq3}, from \eqref{theorem1-eq1} with ${\theta _2} \in ( {{\theta _1},1} )$ and \eqref{Theorem1-proof-i-eq2}, we have
\begin{flalign}
\nonumber
\mathbf{E}_\mathcal{C}[{{\rm{Net} \mbox{-} \rm{CCV}}( T )}] & \le n{G_2}{\vartheta _1} + \frac{{2nR{{( \mathbb{X} )}^2}{\vartheta _6}}}{{{\varsigma _s}{\gamma _0}}}  + \frac{{2n{G_1}R( \mathbb{X} ){\vartheta _6}{\alpha _0}}}{{( {1 - {\theta _1}} ){\varsigma _s}{\gamma _0}}}{T^{1 - {\theta _1}}} + \frac{{2nG_1^2{\vartheta _6}{\alpha _0^2}}}{{( {1 - {\theta _1}} ){\varsigma _s}{\gamma _0}}}{T^{1 - {\theta _1}}} \\
&\;\; + \frac{{{\vartheta _5}}}{{1 - {\theta _1}}}{T^{1 - {\theta _1}}} + \frac{{2n{\tilde p}\sqrt C R( \mathbb{X} ){\vartheta _6}{s_0}}}{{( {1 - {\theta _2}} ){\varsigma _s}{\gamma _0}}}{T^{1 - {\theta _2}}} + \frac{{n{\tilde p}\sqrt C {G_2}{{\tilde \vartheta }_2}{s_0}}}{{( {1 - {\theta _2}} )}}{T^{1 - {\theta _2}}}.
\label{Theorem1-proof-iii-eq4}
\end{flalign}
Combining \eqref{Theorem1-proof-iii-eq2} and \eqref{Theorem1-proof-ii-eq1}--\eqref{Theorem1-proof-ii-eq4}, from \eqref{theorem1-eq1} with ${\theta _2} = 1$ and \eqref{Theorem1-proof-i-eq6}, we have
\begin{flalign}
\nonumber
\sum\limits_{i = 1}^n {\sum\limits_{t = 1}^T {{\mathbf{E}_\mathcal{C}}[ {\| {{{[ {{g_{i,t}}( {{x_{i,t}}} )} ]}_ + }} \|} ]} } &\le \frac{{2nR{{( \mathbb{X} )}^2}}}{{{\varsigma _s}{\gamma _0}}} + \frac{{2n{G_1}R( \mathbb{X} ){\alpha _0}}}{{( {1 - {\theta _1}} ){\varsigma _s}{\gamma _0}}}{T^{1 - {\theta _1}}} + \frac{{2nG_1^2{\alpha _0^2}}}{{( {1 - {\theta _1}} ){\varsigma _s}{\gamma _0}}}{T^{1 - {\theta _1}}} \\
&\;\; + \frac{{4n{\tilde p}\sqrt C R( \mathbb{X} ){s_0}}}{{{\varsigma _s}{\gamma _0}}}\log ( T ).
\label{Theorem1-proof-iii-eq5}
\end{flalign}
Combining \eqref{Lemma6-eq3}, \eqref{Theorem1-proof-i-eq1}, \eqref{Theorem1-proof-iii-eq5}, from \eqref{theorem1-eq1} with ${\theta _2} = 1$ and \eqref{Theorem1-proof-i-eq6}, we have
\begin{flalign}
\nonumber
\mathbf{E}_\mathcal{C}[{{\rm{Net} \mbox{-} \rm{CCV}}( T )}] & \le n{G_2}{\vartheta _1} + \frac{{2nR{{( \mathbb{X} )}^2}{\vartheta _6}}}{{{\varsigma _s}{\gamma _0}}} + \frac{{2n{G_1}R( \mathbb{X} ){\vartheta _6}{\alpha _0}}}{{( {1 - {\theta _1}} ){\varsigma _s}{\gamma _0}}}{T^{1 - {\theta _1}}} + \frac{{2nG_1^2{\vartheta _6}{\alpha _0^2}}}{{( {1 - {\theta _1}} ){\varsigma _s}{\gamma _0}}}{T^{1 - {\theta _1}}} \\
&\;\; + \frac{{{\vartheta _5}}}{{1 - {\theta _1}}}{T^{1 - {\theta _1}}} + \frac{{4n{\tilde p}\sqrt C R( \mathbb{X} ){\vartheta _6}{s_0}}}{{{\varsigma _s}{\gamma _0}}}\log ( T ) + 2n{\tilde p}\sqrt C {G_2}{{\tilde \vartheta }_2}{s_0}\log ( T ).
\label{Theorem1-proof-iii-eq6}
\end{flalign}
Combining \eqref{Theorem1-proof-iii-eq2} and \eqref{Theorem1-proof-ii-eq1}--\eqref{Theorem1-proof-ii-eq4}, from \eqref{theorem1-eq1} with ${\theta _2} > 1$, \eqref{Theorem1-proof-i-eq9}, \eqref{Theorem1-proof-i-eq12}, and \eqref{Theorem1-proof-i-eq15}, we have
\begin{flalign}
\nonumber
\sum\limits_{i = 1}^n {\sum\limits_{t = 1}^T {{\mathbf{E}_\mathcal{C}}[ {\| {{{[ {{g_{i,t}}( {{x_{i,t}}} )} ]}_ + }} \|} ]} } &\le \frac{{2nR{{( \mathbb{X} )}^2}}}{{{\varsigma _s}{\gamma _0}}} + \frac{{2n{G_1}R( \mathbb{X} ){\alpha _0}}}{{( {1 - {\theta _1}} ){\varsigma _s}{\gamma _0}}}{T^{1 - {\theta _1}}} + \frac{{2nG_1^2{\alpha _0^2}}}{{( {1 - {\theta _1}} ){\varsigma _s}{\gamma _0}}}{T^{1 - {\theta _1}}} \\
&\;\; + \frac{{2n{\tilde p}\sqrt C R( \mathbb{X} ){\rm Z}{s_0}}}{{{\varsigma _s}{\gamma _0}}}.
\label{Theorem1-proof-iii-eq7}
\end{flalign}
Combining \eqref{Lemma6-eq3}, \eqref{Theorem1-proof-i-eq1}, \eqref{Theorem1-proof-iii-eq7}, from \eqref{theorem1-eq1} with ${\theta _2} > 1$, \eqref{Theorem1-proof-i-eq9}, \eqref{Theorem1-proof-i-eq12}, and \eqref{Theorem1-proof-i-eq15}, we have
\begin{flalign}
\nonumber
\mathbf{E}_\mathcal{C}[{{\rm{Net} \mbox{-} \rm{CCV}}( T )}] & \le n{G_2}{\vartheta _1} + \frac{{2nR{{( \mathbb{X} )}^2}{\vartheta _6}}}{{{\varsigma _s}{\gamma _0}}} + \frac{{2n{G_1}R( \mathbb{X} ){\vartheta _6}{\alpha _0}}}{{( {1 - {\theta _1}} ){\varsigma _s}{\gamma _0}}}{T^{1 - {\theta _1}}} + \frac{{2nG_1^2{\vartheta _6}{\alpha _0^2}}}{{( {1 - {\theta _1}} ){\varsigma _s}{\gamma _0}}}{T^{1 - {\theta _1}}} \\
&\;\; + \frac{{{\vartheta _5}}}{{1 - {\theta _1}}}{T^{1 - {\theta _1}}} + \frac{{2n{\tilde p}\sqrt C R( \mathbb{X} ){\vartheta _6}{\rm Z}{s_0}}}{{{\varsigma _s}{\gamma _0}}} + n{\tilde p}\sqrt C {G_2}{{\tilde \vartheta }_2}{\rm Z}{s_0}.
\label{Theorem1-proof-iii-eq8}
\end{flalign}
From \eqref{Theorem1-proof-iii-eq4}, \eqref{Theorem1-proof-iii-eq6}, and \eqref{Theorem1-proof-iii-eq8}, we know that \eqref{theorem1-eq4} holds.

\hspace{-3mm}\emph{C. Proof of Theorem~2}

(i)
From \eqref{theorem2-eq1}, for any $T \in \mathbb{N}_ +$, we have
\begin{flalign}
&{\alpha _T} = {\alpha _0}\sqrt {\frac{{{\Psi _T}}}{T}}  \le \frac{{{\alpha _0}\mu }}{{1 - \mu }}, \label{Theorem2-proof-i-eq1} \\
&\sum\limits_{t = 1}^T {{\alpha _t}}  = {\alpha _0}\sum\limits_{t = 1}^T {\sqrt {\frac{{{\Psi _t}}}{t}}  \le {\alpha _0}\sqrt {{\Psi _T}} } \sum\limits_{t = 1}^T {\frac{1}{{\sqrt t }}}  \le 2{\alpha _0}\sqrt {{\Psi _T}T}  \le \frac{{2{\alpha _0}\mu }}{{1 - \mu }}\sqrt T, \label{Theorem2-proof-i-eq2}\\
&\sum\limits_{t = 1}^T {{s_t}}  = {s_0}\sum\limits_{t = 1}^T {{\mu ^t}}  = \frac{{{s_0}\mu ( {1 - {\mu ^T}} )}}{{1 - \mu }} \le \frac{{{s_0}\mu }}{{1 - \mu }}, \label{Theorem2-proof-i-eq3}\\
&\sum\limits_{t = 1}^T {\frac{{{s_t}}}{{{\alpha _t}}}}  = \frac{{{s_0}}}{{{\alpha _0}}}\sum\limits_{t = 1}^T {\frac{{{\mu ^t}\sqrt t }}{{\sqrt {{\Psi _t}} }}}  \le \frac{{{s_0}}}{{{\alpha _0}}}\sum\limits_{t = 1}^T {\sqrt {{\mu ^t}} }  \le \frac{{{s_0}\sqrt \mu  }}{{{\alpha _0}( {1 - \sqrt \mu  } )}}.
\label{Theorem2-proof-i-eq4}
\end{flalign}
Combining \eqref{Lemma6-eq1}, \eqref{Theorem1-proof-i-eq4}, \eqref{Theorem2-proof-i-eq1}--\eqref{Theorem2-proof-i-eq4}, from the arbitrariness of ${y} \in {\mathcal{X}_T}$, we have
\begin{flalign}
\nonumber
\mathbf{E}_\mathcal{C}[{{\rm{Net}\mbox{-}\rm{Reg}}( T )}] &\le {\hat \vartheta } + \frac{{2{\vartheta _2}{\alpha _0}\mu }}{{1 - \mu }}\sqrt T  + \frac{{\tilde p\sqrt C {{\tilde \vartheta }_1}{s_0}\mu }}{{1 - \mu }} \\
&\;\; + \frac{{2\tilde p\sqrt C R( \mathbb{X} ){s_0}\sqrt \mu  }}{{{\alpha _0}( {1 - \sqrt \mu  } )}} + \frac{{2R{{( \mathbb{X} )}^2}( {1 - \mu } )}}{{{\alpha _0}\mu }}.
\label{Theorem2-proof-i-eq5}
\end{flalign}
From \eqref{Theorem2-proof-i-eq5}, we know that \eqref{theorem2-eq2} holds.

(ii)
From \eqref{theorem1-eq1}, for any $T \in \mathbb{N}_ +$, we have
\begin{flalign}
&\sum\limits_{t = 1}^T {\frac{1}{{{\gamma _t}}}}  = \frac{1}{{{\gamma _0}}}\sum\limits_{t = 1}^T {{\alpha _t} \le } \frac{{2{\alpha _0}\mu }}{{{\gamma _0}( {1 - \mu } )}}\sqrt T, \label{Theorem2-proof-ii-eq1}\\
&\sum\limits_{t = 1}^T {\frac{{{\gamma _0}}}{{\gamma _t^2}}}  = \frac{{\alpha _0^2}}{{{\gamma _0}}}\sum\limits_{t = 1}^T {\frac{{{\Psi _t}}}{t}}  \le \frac{{\alpha _0^2{\Psi _T}}}{{{\gamma _0}}}\sum\limits_{t = 1}^T {\frac{1}{t}}  \le \frac{{2\alpha _0^2\mu }}{{{\gamma _0}( {1 - \mu } )}}\log ( T ), {\rm{if }}\; T \ge 3, \label{Theorem2-proof-ii-eq2}\\
&\sum\limits_{t = 1}^T {s_t^2}  = s_0^2\sum\limits_{t = 1}^T {{\mu ^{2t}}}  \le \frac{{s_0^2{\mu ^2}}}{{1 - {\mu ^2}}}.
\label{Theorem2-proof-ii-eq3}
\end{flalign}
Combining \eqref{Lemma6-eq2}, \eqref{Theorem1-proof-ii-eq4}, \eqref{Theorem2-proof-i-eq3}, and \eqref{Theorem2-proof-ii-eq1}--\eqref{Theorem2-proof-ii-eq3}, we have
\begin{flalign}
\nonumber
{( {\frac{1}{n}\sum\limits_{i = 1}^n {\sum\limits_{t = 1}^T {{\mathbf{E}_\mathcal{C}}[ {\| {{{[ {{g_t}( {{x_{i,t}}} )} ]}_ + }} \|} ]} } } )^2} &\le {\vartheta _3}T + \frac{{2nR{{( \mathbb{X} )}^2}{\vartheta _4}}}{{{\gamma _0}}}T + \frac{{4n{G_1}R( \mathbb{X} ){\vartheta _4}{\alpha _0}\mu }}{{{\gamma _0}( {1 - \mu } )}}{T^{3/2}}  + \frac{{4nG_1^2{\vartheta _4}\alpha _0^2\mu }}{{{\gamma _0}( {1 - \mu } )}}T\log ( T )\\
&\;\; + \frac{{2n\tilde p\sqrt C R( \mathbb{X} ){\vartheta _4}{s_0}\mu }}{{{\gamma _0}( {1 - \mu } )}}T + \frac{{4n{{\tilde p}^2}CG_2^2{{\tilde \vartheta }_4}s_0^2{\mu ^2}}}{{1 - {\mu ^2}}}T.
\label{Theorem2-proof-ii-eq4}
\end{flalign}
From \eqref{Theorem2-proof-ii-eq4}, we know that \eqref{theorem2-eq3} holds. 

(iii)
Combining \eqref{Theorem1-proof-iii-eq2}, \eqref{Theorem1-proof-ii-eq4}, \eqref{Theorem2-proof-i-eq3}, and \eqref{Theorem2-proof-ii-eq1}--\eqref{Theorem2-proof-ii-eq2}, we have
\begin{flalign}
\nonumber
\sum\limits_{i = 1}^n {\sum\limits_{t = 1}^T {{\mathbf{E}_\mathcal{C}}[ {\| {{{[ {{g_{i,t}}( {{x_{i,t}}} )} ]}_ + }} \|} ]} } &\le \frac{{2nR{{( \mathbb{X} )}^2}}}{{{\varsigma _s}{\gamma _0}}} + \frac{{4n{G_1}R( \mathbb{X} ){\alpha _0}\mu }}{{{\varsigma _s}{\gamma _0}( {1 - \mu } )}}\sqrt T  + \frac{{4nG_1^2\alpha _0^2\mu }}{{{\varsigma _s}{\gamma _0}( {1 - \mu } )}}\log ( T ) \\
&\;\; + \frac{{2n\tilde p\sqrt C R( \mathbb{X} ){s_0}\mu }}{{{\varsigma _s}{\gamma _0}( {1 - \mu } )}}.
\label{Theorem2-proof-iii-eq1}
\end{flalign}
Combining \eqref{Lemma6-eq3}, \eqref{Theorem2-proof-i-eq2}, and \eqref{Theorem2-proof-i-eq3}, we have
\begin{flalign}
\nonumber
\mathbf{E}_\mathcal{C}[{{\rm{Net} \mbox{-} \rm{CCV}}( T )}] & \le n{G_2}{\vartheta _1} + \frac{{2{\vartheta _5}{\alpha _0}\mu }}{{1 - \mu }}\sqrt T  + \frac{{2nR{{( \mathbb{X} )}^2}{\vartheta _6}}}{{{\varsigma _s}{\gamma _0}}} + \frac{{4n{G_1}R( \mathbb{X} ){\vartheta _6}{\alpha _0}\mu }}{{{\varsigma _s}{\gamma _0}( {1 - \mu } )}}\sqrt T \\
&\;\; + \frac{{4nG_1^2{\vartheta _6}\alpha _0^2\mu }}{{{\varsigma _s}{\gamma _0}( {1 - \mu } )}}\log ( T ) + \frac{{2n\tilde p\sqrt C R( \mathbb{X} ){\vartheta _6}{s_0}\mu }}{{{\varsigma _s}{\gamma _0}( {1 - \mu } )}} + \frac{{n\tilde p\sqrt C {G_2}{{\tilde \vartheta }_2}{s_0}\mu }}{{1 - \mu }}.
\label{Theorem2-proof-iii-eq2}
\end{flalign}
From \eqref{Theorem2-proof-iii-eq2}, we know that \eqref{theorem2-eq4} holds. 

\bibliographystyle{IEEEtran}
\bibliography{reference_online}

\begin{thebibliography}{10}
\providecommand{\url}[1]{#1}
\csname url@samestyle\endcsname
\providecommand{\newblock}{\relax}
\providecommand{\bibinfo}[2]{#2}
\providecommand{\BIBentrySTDinterwordspacing}{\spaceskip=0pt\relax}
\providecommand{\BIBentryALTinterwordstretchfactor}{4}
\providecommand{\BIBentryALTinterwordspacing}{\spaceskip=\fontdimen2\font plus
\BIBentryALTinterwordstretchfactor\fontdimen3\font minus
  \fontdimen4\font\relax}
\providecommand{\BIBforeignlanguage}[2]{{%
\expandafter\ifx\csname l@#1\endcsname\relax
\typeout{** WARNING: IEEEtran.bst: No hyphenation pattern has been}%
\typeout{** loaded for the language `#1'. Using the pattern for}%
\typeout{** the default language instead.}%
\else
\language=\csname l@#1\endcsname
\fi
#2}}
\providecommand{\BIBdecl}{\relax}
\BIBdecl

\bibitem{Hazan2016a}
E.~Hazan, ``Introduction to online convex optimization,'' \emph{Foundations and
  Trends in Optimization}, vol.~2, no. 3-4, pp. 157--325, 2016.

\bibitem{SundharRam2010}
S.~Sundhar~Ram, A.~Nedi{\'c}, and V.~V. Veeravalli, ``Distributed stochastic
  subgradient projection algorithms for convex optimization,'' \emph{Journal of
  Optimization Theory and Applications}, vol. 147, pp. 516--545, 2010.

\bibitem{Yan2012}
F.~Yan, S.~Sundaram, S.~Vishwanathan, and Y.~Qi, ``Distributed autonomous
  online learning: Regrets and intrinsic privacy-preserving properties,''
  \emph{IEEE Transactions on Knowledge and Data Engineering}, vol.~25, no.~11,
  pp. 2483--2493, 2012.

\bibitem{Tsianos2012}
K.~I. Tsianos and M.~G. Rabbat, ``Distributed strongly convex optimization,''
  in \emph{Annual Allerton Conference on Communication, Control, and
  Computing}, 2012, pp. 593--600.

\bibitem{Hosseini2016}
S.~Hosseini, A.~Chapman, and M.~Mesbahi, ``Online distributed convex
  optimization on dynamic networks,'' \emph{IEEE Transactions on Automatic
  Control}, vol.~61, no.~11, pp. 3545--3550, 2016.

\bibitem{MateosNunez2014}
D.~Mateos-N{\'u}nez and J.~Cort{\'e}s, ``Distributed online convex optimization
  over jointly connected digraphs,'' \emph{IEEE Transactions on Network Science
  and Engineering}, vol.~1, no.~1, pp. 23--37, 2014.

\bibitem{Koppel2015}
A.~Koppel, F.~Y. Jakubiec, and A.~Ribeiro, ``A saddle point algorithm for
  networked online convex optimization,'' \emph{IEEE Transactions on Signal
  Processing}, vol.~63, no.~19, pp. 5149--5164, 2015.

\bibitem{Shahrampour2017}
S.~Shahrampour and A.~Jadbabaie, ``Distributed online optimization in dynamic
  environments using mirror descent,'' \emph{IEEE Transactions on Automatic
  Control}, vol.~63, no.~3, pp. 714--725, 2017.

\bibitem{Yuan2021}
D.~Yuan, Y.~Hong, D.~W.~C. Ho, and S.~Xu, ``Distributed mirror descent for
  online composite optimization,'' \emph{IEEE Transactions on Automatic
  Control}, vol.~66, no.~2, pp. 714--729, 2021.

\bibitem{Li2022}
X.~Li, X.~Yi, and L.~Xie, ``Distributed online convex optimization with an
  aggregative variable,'' \emph{IEEE Transactions on Control of Network
  Systems}, vol.~9, no.~1, pp. 438--449, 2022.

\bibitem{Li2023}
X.~Li, L.~Xie, and N.~Li, ``A survey on distributed online optimization and
  online games,'' \emph{Annual Reviews in Control}, vol.~56, p. 100904, 2023.

\bibitem{Cao2023a}
X.~Cao and T.~Ba{\c{s}}ar, ``Decentralized online convex optimization with
  compressed communications,'' \emph{Automatica}, vol. 156, p. 111186, 2023.

\bibitem{Tu2022}
Z.~Tu, X.~Wang, Y.~Hong, L.~Wang, D.~Yuan, and G.~Shi, ``Distributed online
  convex optimization with compressed communication,'' in \emph{Advances in
  Neural Information Processing Systems}, 2022, pp. 34\,492--34\,504.

\bibitem{Ge2023}
X.~Ge, H.~Zhang, W.~Xu, and H.~Bao, ``Distributed online bandit optimization
  with communication compression,'' in \emph{International Conference on
  Information Science and Technology}, 2023, pp. 678--686.

\bibitem{Cao2023}
X.~Cao, T.~Ba{\c{s}}ar, S.~Diggavi, Y.~C. Eldar, K.~B. Letaief, H.~V. Poor, and
  J.~Zhang, ``Communication-efficient distributed learning: An overview,''
  \emph{IEEE Journal on Selected Areas in Communications}, vol.~41, no.~4, pp.
  851--873, 2023.

\bibitem{Yuan2017}
D.~Yuan, D.~W. Ho, and G.~Jiang, ``An adaptive primal--dual subgradient
  algorithm for online distributed constrained optimization,'' \emph{IEEE
  Transactions on Cybernetics}, vol.~48, no.~11, pp. 3045--3055, 2017.

\bibitem{Mahdavi2012}
M.~Mahdavi, R.~Jin, and T.~Yang, ``Trading regret for efficiency: Online convex
  optimization with long term constraints,'' \emph{Journal of Machine Learning
  Research}, vol.~13, no.~1, pp. 2503--2528, 2012.

\bibitem{Yuan2021b}
D.~Yuan, A.~Proutiere, and G.~Shi, ``Distributed online linear regressions,''
  \emph{IEEE Transactions on Information Theory}, vol.~67, no.~1, pp. 616--639,
  2021.

\bibitem{Yuan2018}
J.~Yuan and A.~Lamperski, ``Online convex optimization for cumulative
  constraints,'' in \emph{Advances in Neural Information Processing Systems},
  2018, pp. 6140--6149.

\bibitem{Yi2023}
X.~Yi, X.~Li, T.~Yang, L.~Xie, T.~Chai, and K.~H. Johansson, ``Regret and
  cumulative constraint violation analysis for distributed online constrained
  convex optimization,'' \emph{IEEE Transactions on Automatic Control},
  vol.~68, no.~5, pp. 2875--2890, 2023.

\bibitem{Boyd2004}
S.~Boyd and L.~Vandenberghe, \emph{Convex Optimization}.\hskip 1em plus 0.5em
  minus 0.4em\relax Cambridge University Press, 2004.

\bibitem{Yu2017}
H.~Yu, M.~Neely, and X.~Wei, ``Online convex optimization with stochastic
  constraints,'' in \emph{Advances in Neural Information Processing Systems},
  vol.~30, 2017.

\bibitem{Neely2017}
M.~J. Neely and H.~Yu, ``Online convex optimization with time-varying
  constraints,'' \emph{arXiv:1702.04783}, 2017.

\bibitem{Yi2024}
X.~Yi, X.~Li, T.~Yang, L.~Xie, Y.~Hong, T.~Chai, and K.~H. Johansson,
  ``Distributed online convex optimization with time-varying constraints:
  Tighter cumulative constraint violation bounds under slater's condition,''
  \emph{IEEE Transactions on Automatic Control}, 2025, DOI
  10.1109/TAC.2025.3547606.

\bibitem{Lu2021a}
K.~Lu and L.~Wang, ``Online distributed optimization with nonconvex objective
  functions: Sublinearity of first-order optimality condition-based regret,''
  \emph{IEEE Transactions on Automatic Control}, vol.~67, no.~6, pp.
  3029--3035, 2021.

\bibitem{ShalevShwartz2012}
S.~Shalev-Shwartz, ``Online learning and online convex optimization,''
  \emph{Foundations and Trends in Machine Learning}, vol.~4, no.~2, pp.
  107--194, 2012.

\bibitem{Yi2022}
X.~Yi, S.~Zhang, T.~Yang, T.~Chai, and K.~H. Johansson, ``Communication
  compression for distributed nonconvex optimization,'' \emph{IEEE Transactions
  on Automatic Control}, vol.~68, no.~9, pp. 5477--5492, 2022.

\bibitem{Li2017}
H.~Li, S.~De, Z.~Xu, C.~Studer, H.~Samet, and T.~Goldstein, ``Training
  quantized nets: A deeper understanding,'' in \emph{Advances in Neural
  Information Processing Systems}, vol.~30, 2017.

\bibitem{Khirirat2021}
S.~Khirirat, S.~Magn{\'u}sson, and M.~Johansson, ``Compressed gradient methods
  with hessian-aided error compensation,'' \emph{IEEE Transactions on Signal
  Processing}, vol.~69, pp. 998--1011, 2021.

\bibitem{Horn2012}
R.~A. Horn and C.~R. Johnson, \emph{Matrix Analysis}.\hskip 1em plus 0.5em
  minus 0.4em\relax Cambridge University Press, 2012.

\bibitem{Nedic2009}
A.~Nedi{\'c} and A.~Ozdaglar, ``Distributed subgradient methods for multi-agent
  optimization,'' \emph{IEEE Transactions on Automatic Control}, vol.~54,
  no.~1, pp. 48--61, 2009.

\bibitem{Nedic2014}
A.~Nedi{\'c} and A.~Olshevsky, ``Distributed optimization over time-varying
  directed graphs,'' \emph{IEEE Transactions on Automatic Control}, vol.~60,
  no.~3, pp. 601--615, 2014.

\bibitem{Zhang2024b}
K.~Zhang, X.~Yi, J.~Ding, M.~Cao, K.~H. Johansson, and T.~Yang, ``Reduced
  network cumulative constraint violation for distributed bandit convex
  optimization under slater condition,'' \emph{arXiv:2411.11574}, 2024.

\end{thebibliography}

\end{document}